\documentclass[reqno]{amsart}

\usepackage[title]{appendix}
\usepackage[french,english]{babel}
\usepackage[utf8]{inputenc}
\usepackage[T1]{fontenc}
\usepackage{comment}
\usepackage{palatino}
\usepackage[autostyle]{csquotes}
\usepackage{amsmath}
\usepackage{amsfonts,amssymb,amsmath,latexsym,mathrsfs,bbm,stmaryrd}
\usepackage[all]{xy}
\usepackage[usenames]{color}
\usepackage{epsfig}
\usepackage{graphicx}
\usepackage{subcaption}
\usepackage{float,enumerate}
\usepackage{amsthm}
\usepackage{thmtools}
\usepackage{thm-restate}
\usepackage[mathcal]{euscript}
\usepackage{times}
\usepackage{xcolor}
\usepackage[export]{adjustbox}
\usepackage{hyperref}

\theoremstyle{plain}
\newtheorem{theorem}{Theorem}[section]

\newtheorem{assumption}[theorem]{Assumption}

\newtheorem{proposition}[theorem]{Proposition}

\newtheorem{lemma}[theorem]{Lemma}
\newtheorem{corollary}[theorem]{Corollary}

\theoremstyle{definition}
\newtheorem{definition}[theorem]{Definition}

\newtheorem{remark}[theorem]{Remark}
\newtheorem{notation}[theorem]{Notation}

\numberwithin{equation}{section} 

\newtheorem{assumpy}{Assumption}

\newcounter{tmp}
\usepackage[T1]{fontenc}

\renewcommand{\Im}{\operatorname{Im}}

\renewcommand\Im{\operatorname{Im}}

\def\R{{\mathbb R}}

\def\<{{\langle}}
\def\>{{\rangle}}

\begin{document}

\title{Well-posedness and stationary distribution of free stochastic differential equations}

\author{
Jiaxin Wei and Zhi Yin
}

\address{
\parbox{\linewidth}{Jiaxin Wei,
School of Mathematics and Statistics, 
Central South University,\\
Changsha, Hunan 410083, China.\\}
}

\address{
\parbox{\linewidth}{Zhi Yin,
School of Mathematics and Statistics, 
Central South University,\\
Changsha, Hunan 410083, China.\\}
}

\date{\today}
\maketitle

\begin{abstract}
This paper studies free stochastic differential equations driven by free Brownian motion. Under local operator Lipschitz and Lyapunov-type conditions on the coefficients, we prove the global well-posedness of solutions in the noncommutative probability setting using free It\^o calculus. We further establish the existence and uniqueness of stationary solutions under appropriate dissipativity conditions. Our results extend classical theory to the free probability framework.
\end{abstract}



\section{Introduction}

\subsection{Free stochastic differential equations.}\label{subsec:fSDE} 

A \textit{noncommutative probability space} is a pair $(\mathscr{A}, \tau),$ where $\mathscr{A}$ is a von Neumann algebra and $\tau: \mathscr{A} \to \mathbb{C}$ is a faithful, normal, tracial state (i.e. $\tau(\mathbf{1})=1$, $\tau$ is weakly continuous, and $\tau(XY)=\tau(YX)$ for all $X, Y\in\mathscr{A}$, with $\tau(X^*X)=0$ only if $X=0$). Introduced by Voiculescu \cite{D94}, free independence of operators in $\mathscr{A}$ offers a noncommutative counterpart to classical probability theory, leading to what is called free probability theory. Subsequently, many notions from classical probability were successfully transferred to free probability. We refer to the monographs \cite{NS2006, mingoFreeProbabilityRandom2017} for more details. 

\textit{Free stochastic differential equations} (SDEs), the free probability analogues of classical SDEs \cite{oksendal2003}, provide a natural framework for describing the evolution of operator-valued processes $\{U_t\}_{t\ge 0}.$  
In general, for a fixed $l\in\mathbb{N}^{*}$, we consider a class of free SDEs of the form
\begin{equation}\label{eq:fSDE}
dU_t=\alpha(U_t)\,{\rm d}t+\sum_{i=1}^l \beta^i(U_t)\,{\rm d}W_t\,\gamma^i(U_t),
\end{equation}
with an initial value $U_0\in \mathscr{A}_{\mathrm{sa}}$, where $\mathscr{A}_{\mathrm{sa}}$ denotes the set of self-adjoint operators in $\mathscr{A}.$  Note that the drift coefficient $\alpha$ and the diffusion coefficients $\beta^i, \gamma^i$ are operator-valued maps; $\{W_t\}_{t \ge 0}$ is a free Brownian motion, and the stochastic integral is defined by free stochastic calculus, which was introduced by Biane and Speicher   \cite{speicher1990, kummerer1992, biane1997, bianeStochasticCalculusRespect1998}. We refer to Section \ref{sec:pre} for more details. 

The motivation for studying free SDEs comes from random matrix theory. Let $f \in C^2(\mathbb{R})$ be a potential function. Consider the following $N \times N$ matrix-valued diffusion equation:
\begin{equation}
d U_t^{(N)} =-\frac{1}{2} f'(U_t^{(N)}) {\rm d}t + {\rm d}W_t^{(N)},
\end{equation}
where $\{W_t^{(N)}\}_{t\geq 0}$ is  a Hermitian matrix-valued Brownian motion. It can be proved that the large-$N$ limit of $U_t^{(N)}$ satisfies the following free diffusion equation  \cite{bianeFreeDiffusionsFree2001, guionnet_free_2009}: 
 \begin{equation}\label{eq:fDiff}
d U_t = -\frac{1}{2}f'(U_t)  {\rm d} t + {\rm d}W_t.
\end{equation}
Hence, it provides a hydrodynamic proof of the famous Wigner semi-circle law (for $f(x) = x^2/2$). On the other hand, for such matrix dynamics, the unique invariant measure, which is called the Gibbs state, is characterized by a classical entropy maximization principle. In the large-$N$ limit, the stationary distribution of the limiting free diffusion equations corresponds to the free Gibbs state, and the asymptotic analysis of the associated classical entropy leads directly to Voiculescu’s free entropy \cite{voiculescu_analogues_1993, voiculescu_analogues_1996, voiculescu_analogues_1997, voiculescu_analogues_1998, voiculescu_analogues_1999}. Thus, the study of free diffusions provides a dynamic pathway to the free Gibbs state and offers a natural stochastic interpretation of free entropy. Building  on this line of research, free diffusion equations \eqref{eq:fDiff} play an important role in many aspects of random matrix theory and free probability, such as large deviations for random matrices \cite{Biane:2003aa, Guionnet-prob-survey}, free entropy \cite{bianeFreeDiffusionsFree2001, biane2003, shlyakhtenko_lower_2009}, and free optimal transport theory \cite{Guionnet:2014aa, dabrowski_free_2021}. 
In particular, Dabrowski  \cite{dabrowski_non-commutative_2010, dabrowski_free_2013, dabrowski_time_2014, dabrowski_laplace_2017} developed a noncommutative path space framework for free diffusion equations, and investigated their stationary solutions, time-reversal symmetry, and deep connections with free entropy.

Thus, motivated by the free diffusion case, it is natural to consider free SDEs with general coefficients by addressing the following two questions: 1). \textit{Convergence}: How can we establish the convergence of matrix-valued SDEs to their free counterparts? To the best of our knowledge, only a few cases, apart from the diffusion case itself, have been rigorously confirmed \cite{Kemp2016, MP-AIHP-2002}. 2). \textit{Well-posedness}: What can be said about the well-posedness of free SDEs? Kargin \cite{karginFreeStochasticDifferential2011} proved the existence of local solutions to \eqref{eq:fSDE} provided the coefficients are locally operator Lipschitz. However, this condition does not, in general, guarantee the existence of a global solution. As mentioned earlier for the diffusion case, a global solution and its stationary distribution provide a stochastic interpretation of free entropy. Furthermore, establishing global solutions is crucial for numerical analysis, as their existence offers an a priori guarantee for the convergence of numerical methods \cite{schluechtermannNumericalSolutionFree2022a, wei_stochastic_2025}.

In this paper, we take a small step toward addressing the second question. Inspired by classical theory \cite{claudia_concise_2007}, we provide sufficient conditions for the existence of a global solution and a stationary distribution for the free SDE \eqref{eq:fSDE}.

\subsection{Global well-posedness of free SDEs.} 

Let $\alpha,\beta^i,\gamma^i$ ($i=1,\ldots,l$) be operator-valued maps such that $\alpha,\beta^i,\gamma^i: \mathscr{A}_{sa} \to \mathscr{A}_{sa}$. For any given initial value $U_0 \in \mathscr{A}_{sa}$ that is free from the free Brownian motion $\{W_t\}_{t\geq 0},$ a \textit{global solution} to \eqref{eq:fSDE} is defined as an operator-valued process $t \mapsto U_t$ from $\mathbb{R}_+$ into $\mathscr{A}_{sa}$ satisfying the integral equation  
\begin{equation}\label{eq:solution}
U_t = U_0 +\int_{0}^{t} \alpha(U_s ) {\rm d} s + \sum_{i=1}^l \int_{0}^{t} \beta^i (U_s ) {\rm d}W_s \gamma^i(U_s)
\end{equation}
for any $t \geq 0.$

\begin{remark}
For each $t\ge0,$ define
\begin{equation*}
\mathscr A_{\le t}:=W^*(U_0,W_r:r\le t),\quad
\mathscr A_{\ge t}:=W^*(U_t,W_r-W_t:r\ge t),\quad
\mathscr A_{=t}:=W^*(U_t)
\end{equation*}
which denote respectively the past, present and future of the process \cite{bianeFreeDiffusionsFree2001, gao2008}. It is natural to assume that the maps $\alpha, \beta^i, \gamma^i (i=1, \ldots, l)$ do not break causality of the process, meaning that operators in the past cannot escape their subalgebras via these maps. For instance, maps given by noncommutative polynomials in one-variable, or those derived using functional calculus, satisfy this property.
\end{remark}

\begin{remark}\label{rmk:sa}
In this paper, we restrict ourselves to self-adjoint solutions, motivated by the matrix case (since we only consider the Hermitian matrix model). Therefore, we require that both $\alpha(U_t)$ and the 
sum $\sum_{i=1}^l \int_{0}^{t} \beta^i (U_s ) {\rm d}W_s \gamma^i(U_s)$ be self-adjoint. 
\end{remark}

Recall the free diffusion equation
\begin{equation}\label{eq:free-diffusion}
d U_t = \alpha (U_t) {\rm d} t + {\rm d}W_t.
\end{equation}
Assume that $\alpha$ is locally operator Lipschitz \cite{Aleksandrov_2016}, 
namely, if for all $M > 0,$ 
there is a constant $L_\alpha(M) > 0$ such that
\begin{equation}\label{eq:local-operator-Lip}
\left\| \alpha (U) - \alpha(V) \right\| \leq L_\alpha(M) \left\| U- V \right\|
\end{equation}
for any $U, V \in \mathscr{A}_{\mathrm{sa}}$ and $\| U \|, \| V \| \leq M.$ Moreover, if the map $\alpha$ also satisfies the following condition:
\begin{equation}\label{eq:speicher-biane}
U\alpha(U) + \alpha(U)U + \mathbf{1} \le a U^2 + b \qquad (a\in\mathbb{R},\; b\ge0),
\end{equation}
for any $U \in \mathscr{A}_{sa}.$ Then Biane and Speicher proved there exists a unique global solution to \eqref{eq:free-diffusion} (see \cite[Theorem 3.1]{bianeFreeDiffusionsFree2001}). 
Their result was later extended to the equation
\begin{equation}\label{eq:fSDE-one-side}
d U_t =\alpha (U_t) {\rm d} t +\sum_{i=1}^l \left(\beta^i (U_t)  {\rm d}W_t +  {\rm d}W_t  \beta^i(U_t)\right)
\end{equation}
by Capitaine and Donati‑Martin \cite[Proposition A.1]{capitaine_free_2005}, again yielding a unique global solution under locally operator Lipschitz coefficients.
Regrading the general case \eqref{eq:fSDE}, Kargin \cite{karginFreeStochasticDifferential2011} proved that, under the local operator Lipschitz assumptions, one can only guarantee the existence of a local solution on a small time interval. 
\begin{theorem}\cite[Theorem 3.1]{karginFreeStochasticDifferential2011}\label{thm:solution-kargin}
Suppose that $\alpha, \beta^i,$ and $\gamma^i$ $(i=1,2,\ldots,l)$ are locally operator Lipschitz. Then there exist a sufficient small $T_0>0$ and a family of operators $U_t$ defined for
all $t \in [ 0, T_0),$ such that $U_t$ is a unique solution to \eqref{eq:fSDE} for $t < T_0.$
\end{theorem}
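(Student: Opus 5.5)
We will prove Theorem \ref{thm:solution-kargin} by Picard iteration, i.e. a contraction argument in a space of adapted operator-valued processes equipped with the operator-norm supremum. Fix $M:=2\|U_0\|+1$ and let $L(M)$ be a common local operator Lipschitz constant for $\alpha,\beta^i,\gamma^i$ on the ball $B_M=\{V\in\mathscr A_{\rm sa}:\|V\|\le M\}$. Also let $K(M):=\max\{\sup_{B_M}\|\alpha\|,\sup_{B_M}\|\beta^i\|,\sup_{B_M}\|\gamma^i\|\}$. This is finite, since a Lipschitz map on a ball is bounded there: $\|\alpha(V)\|\le\|\alpha(0)\|+L(M)M$.

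For $T>0$ let $\mathcal X_T$ be the set of processes $V:[0,T]\to B_M$ that are continuous in norm and adapted, meaning $V_t\in\mathscr A_{\le t}$. We equip it with the metric $d(V,V')=\sup_{t\le T}\|V_t-V'_t\|$. It is complete, because norm limits of adapted continuous processes are adapted and continuous. Define
\[
\Phi(V)_t=U_0+\int_0^t\alpha(V_s)\,ds+\sum_{i=1}^l\int_0^t\beta^i(V_s)\,dW_s\,\gamma^i(V_s).
\]
The stochastic integral is well defined because the integrands are adapted, which is the causality assumption on the maps, and bounded. Self-adjointness is guaranteed by Remark \ref{rmk:sa}.

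The key tool is the free Burkholder--Davis--Gundy inequality of Biane--Speicher for biprocesses:
\[
\Big\|\int_0^t A_s\,dW_s\,B_s\Big\|\le 2\sqrt2\Big(\int_0^t\|A_s\|^2\|B_s\|^2ds\Big)^{1/2}.
\]
More precisely, it bounds the operator norm of the integral by the $L^2$-in-time norm of $\|A_s\otimes B_s\|$ in $\mathscr A\bar\otimes\mathscr A^{op}$, and we apply it to the uniformly bounded adapted integrands $A_s=\beta^i(V_s)$, $B_s=\gamma^i(V_s)$. Norm continuity of $t\mapsto\Phi(V)_t$ also follows from this inequality applied on $[s,t]$.

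\emph{Self-map.} The estimate gives $\|\Phi(V)_t-U_0\|\le K T+2\sqrt2\,l\,K^2\sqrt T$. For $T$ small this is at most $\|U_0\|+1$, so $\Phi(V)_t\in B_M$.

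\emph{Contraction.} Write $\beta^i(V)\,dW\,\gamma^i(V)-\beta^i(V')\,dW\,\gamma^i(V')=(\beta^i(V)-\beta^i(V'))\,dW\,\gamma^i(V)+\beta^i(V')\,dW\,(\gamma^i(V)-\gamma^i(V'))$. Applying the inequality to each term gives
\[
d(\Phi V,\Phi V')\le\big(LT+4\sqrt2\,l\,LK\sqrt T\big)\,d(V,V').
\]
Choosing $T_0$ so that the factor is less than $1$ (and the self-map condition holds), the Banach fixed point theorem gives a unique fixed point in $\mathcal X_{T_0}$, which solves \eqref{eq:solution} on $[0,T_0)$.

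Uniqueness among all solutions, not only those staying in $B_M$, follows by a stopping argument. If $U'$ is another norm-continuous solution, let $\sigma=\inf\{t:\|U'_t\|>M\}$; then $\sigma>0$ by continuity. On $[0,\min(\sigma,T_0)]$ both solutions lie in $B_M$, so the same Lipschitz estimate on short intervals, iterated as in a Gronwall argument, forces $U=U'$ there. In particular $\|U'_t\|\le M-1<M$ on that interval, so $\sigma\ge T_0$ and the two solutions agree on all of $[0,T_0)$.

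The main obstacle is justifying the operator-norm BDG estimate for general two-sided integrands $\beta^i\,dW\,\gamma^i$. Unlike the classical case, operator-norm bounds must be used rather than $L^2$ bounds: $L^2$ estimates would not keep the iterates bounded, and boundedness is what the local Lipschitz hypotheses require. The plan is to quote the Biane--Speicher inequality and prove it first for simple adapted biprocesses, using freeness of the increments. For a single elementary term the bound follows from free independence of $W_{t_{k+1}}-W_{t_k}$ from $\mathscr A_{\le t_k}$ together with the norm $2\sqrt{t_{k+1}-t_k}$ of a semicircular element, and the sum over $k$ requires the full Biane--Speicher estimate. The inequality then extends to general integrands by density.
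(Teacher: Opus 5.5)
Your proof is correct and is essentially the route the paper attributes to Kargin (the paper gives no proof of its own, only noting that the key is convergence of the Picard iteration \eqref{eq:picard-iter}): a Picard/Banach fixed-point argument in the operator-norm sup metric, driven by the free Burkholder--Gundy inequality \eqref{eq:B-G}. There is one small slip in the uniqueness step: your self-map choice only gives $\|U_t\|\le M$, not $\|U_t\|\le M-1$, so to get the strict inequality that step needs, shrink $T_0$ so that $KT_0+2\sqrt2\,l\,K^2\sqrt{T_0}\le\|U_0\|+\tfrac12$; then $\|U_t\|\le M-\tfrac12$, and norm continuity of $U'$ forces $\sigma\ge T_0$.
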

The key idea of their proofs is to show the convergence of the following Picard iteration: Define $U_t^{(0)}= U_0,$ and 
\begin{equation}\label{eq:picard-iter}
U_t^{(n+1)} = U_0 +\int_{0}^{t} \alpha(U_s^{(n)} ) {\rm d} s + \sum_{i=1}^l \int_{0}^{t} \beta^i (U_s^{(n)} ) {\rm d}W_s \gamma^i(U_s^{(n)}).
\end{equation}
However, in the noncommutative space $(\mathscr{A}, \tau)$, the Picard iteration may generate uncontrollable higher‑order terms arising from the diffusion coefficients, which prevents the extension of a local solution to a global one.

To overcome this obstacle, we allow the solution to reside in a slightly larger space, namely the noncommutative $L^2$-space $L^2(\mathscr{A}, \tau)$. This space is a Hilbert space with inner product $\langle U, V \rangle := \tau(U^* V)$ and is obtained as the completion of $\mathscr{A}$ with respect to the noncommutative $L^2$-norm $\|U\|_2 := (\tau(U^*U))^{1/2}$. We remark that the stochastic integration can also be extended to this framework (see \cite{anshelevichItoFormulaFree2002, gao2008,schluechtermannNumericalSolutionFree2022a}). Accordingly, let $L^2(\mathscr{A},\tau)_{sa}$ denote the set of all self‑adjoint operators in $L^2(\mathscr{A},\tau)$. We consider the solution $\{U_t\}_{t \ge 0}$ as an $L^2(\mathscr{A},\tau)_{sa}$-valued process satisfying the integral equation \eqref{eq:solution}. We begin to present our results by introducing the following assumptions on the drift and diffusion coefficients:

An operator-valued map $f: \mathscr{A}_{sa} \to \mathscr{A}_{sa}$ is called (locally) operator Lipschitz in the $L^2$-norm \cite{bianeFreeDiffusionsFree2001, gao2008}, if for all $t\ge0$ such that $f: \mathscr{A}_{sa} \to \mathscr{A}_{sa}$ and
for all $M > 0,$ there is a constant $L_f(M) > 0$ such that
\begin{equation}\label{eq:beta-gamma-Lip-loc}
\left\| f(U) - f(V) \right\|_2 \leq L_f(M) \left\| U- V \right\|_2
\end{equation}
for any $U, V \in  \mathscr{A}_{sa}$ with $\|U\|_2, \| V\|_2\leq M.$

\begingroup
\setcounter{tmp}{\value{assumption}}
\setcounter{assumption}{0} 
\renewcommand{\theassumption}{(A-\arabic{assumption})}

\begin{assumption}\label{assumption:ass1}
$\alpha,$ $\beta^i,$ and $\gamma^i$ ($i=1,\ldots,l$) are locally operator Lipschitz in the $L^2$-norm with the Lipschitz constants $L_{\alpha}(M),$ $L_{\beta^i}(M),$ and $L_{\gamma^i}(M)$, respectively. 
\end{assumption}
We have the following direct consequence of the above assumption: for any $U \in L^2(\mathscr{A},\tau)_{sa}$ with $\|U\|_2\le M$,
\begin{equation}\label{eq:bound-loc}
\|\alpha(U)\|_2 \le K_{\alpha}(M), \quad 
\|\beta^i(U)\|_2 \le K_{\beta^i}(M), \quad 
\|\gamma^i(U)\|_2 \le K_{\gamma^i}(M),
\end{equation}
where
\begin{equation*}
K_\alpha(M):=\|\alpha(0)\|_2+L_{\alpha}(M)M,
\end{equation*}
and similarly for $K_{\beta^i}(M)$ and $K_{\gamma^i}(M)$.

\begin{assumption}\label{assumption:ass2}	
There exist constants $ L_1\in\mathbb{R}$ and $L_2>0,$ such that for any $U\in  L^2(\mathscr{A},\tau)_{sa},$ 
	\begin{equation}\label{eq:ohtj}
	2\tau\left( U \alpha(U) \right)+(\sum_{i=1}^l \left\|\beta^i(U) \right\|_2 \cdot \left\|\gamma^i(U) \right\|_2)^2 \leq L_1\left\| U \right\|_2^2+L_2.
	\end{equation}
\end{assumption}
\endgroup

\begin{remark}
Given Assumption \ref{assumption:ass1}, by using the Picard iteration, one can straightforwardly establish the existence of a unique local solution via an adaptation of Kargin’s proof (see \cite{schluechtermannNumericalSolutionFree2022a}).
\end{remark}

Our first main result gives a sufficient condition for the existence and uniqueness of the global solution to the free SDE \eqref{eq:fSDE}.
\begin{theorem}\label{thm:solution}
Assume that Assumptions \ref{assumption:ass1} and \ref{assumption:ass2} hold. Then for any initial value $U_0 \in \mathscr{A}_{sa}$ that is free from $\{W_t\}_{t\geq 0},$ there exists a unique (global) solution $\{U_{t}\}_{t\in [0, \infty)}$ to the free SDE \eqref{eq:fSDE} such that $U_{t} \in L^2(\mathscr A,\tau)_{sa}$ for all $t\ge 0$.
Moreover, the solution is uniformly bounded in the $L^2$-norm and the map $t \to U_{t}$ is $L^2$-norm continuous.
\end{theorem}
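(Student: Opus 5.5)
The plan follows the classical Khasminskii-type argument: a local solution obtained by Picard iteration, an a priori $L^2$ bound from the Lyapunov function $\|U\|_2^2$ via free Itô calculus, and extension by stopping at the first exit from a ball.

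\textbf{Step 1: local existence.} For fixed $M>\|U_0\|_2$, replace $\alpha,\beta^i,\gamma^i$ by truncated coefficients $\alpha_M(U):=\alpha(\pi_M(U))$, and similarly for $\beta^i,\gamma^i$. Here $\pi_M(U)=U$ if $\|U\|_2\le M$ and $\pi_M(U)=MU/\|U\|_2$ otherwise. Since $\pi_M$ is $1$-Lipschitz in $\|\cdot\|_2$, these maps are globally Lipschitz in the $L^2$-norm with constants $L_\alpha(M)$ etc. For them I run the Picard iteration \eqref{eq:picard-iter} in $L^2(\mathscr A,\tau)_{sa}$. The key tool is the free Itô isometry / Burkholder-type bound for biprocesses,
\begin{equation*}
\Bigl\|\int_0^t \beta(s)\,{\rm d}W_s\,\gamma(s)\Bigr\|_2^2=\int_0^t \tau(\beta(s)^*\beta(s))\,\tau(\gamma(s)\gamma(s)^*)\,{\rm d}s\le \int_0^t\|\beta(s)\|_2^2\|\gamma(s)\|_2^2\,{\rm d}s.
\end{equation*}
It uses freeness of increments from the adapted past, so the product of $L^2$-norms appears rather than operator norms. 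Because products of two Lipschitz terms appear, I write
\begin{equation*}
\beta U\gamma-\beta'U\gamma'=(\beta-\beta')\,dW\,\gamma+\beta'\,dW\,(\gamma-\gamma').
\end{equation*}
The bound for each difference then involves the factor $\|\gamma\|_2\le K_\gamma(M)$, which is finite because the arguments are truncated. A Gronwall/contraction estimate on $\sup_{t\le T}\|U^{(n+1)}_t-U^{(n)}_t\|_2$ gives a unique continuous solution $U^M$ of the truncated equation on any $[0,T]$. Uniqueness follows from the same estimate.

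\textbf{Step 2: consistency.} Set $\sigma_M:=\inf\{t:\|U^M_t\|_2\ge M\}$. Since $t\mapsto\|U_t\|_2$ is deterministic (a trace quantity), $\sigma_M$ is a deterministic time, so no genuine stopping-time theory is needed. On $[0,\sigma_M]$ the truncation is inactive. By uniqueness, $U^M=U^{M'}$ on $[0,\sigma_M\wedge\sigma_{M'}]$ for $M<M'$, so the solutions patch together into a solution $U_t$ on $[0,\sigma_\infty)$ with $\sigma_\infty=\lim_M\sigma_M$.

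\textbf{Step 3: a priori bound (main step).} Apply the free Itô formula to $\tau(U_t^2)$. The martingale part has zero trace. The drift gives $2\tau(U_t\alpha(U_t))$. The Itô correction from $\sum_i\beta^i\,dW\gamma^i$ is
\begin{equation*}
\sum_{i,j}\tau(\beta^{i*}\beta^j)\,\tau(\gamma^j\gamma^{i*}),
\end{equation*}
which by Cauchy–Schwarz is at most $(\sum_i\|\beta^i\|_2\|\gamma^i\|_2)^2$. This is exactly where Assumption \ref{assumption:ass2} enters:
\begin{equation*}
\frac{d}{dt}\|U_t\|_2^2\le L_1\|U_t\|_2^2+L_2 .
\end{equation*}
Gronwall then gives $\|U_t\|_2^2\le e^{L_1t}\|U_0\|_2^2+L_2(e^{L_1t}-1)/L_1$, finite on bounded intervals. (Interpret the formula as $L_2t$ when $L_1=0$; if $L_1<0$ it is bounded uniformly in $t$, which is presumably the intended meaning of ``uniformly bounded''. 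Otherwise the bound is uniform on compact time intervals.) Hence $\sigma_M\to\infty$ and the solution is global.

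\textbf{Step 4: continuity, and the expected obstacle.} $L^2$-continuity follows from the integral equation, the isometry, and local boundedness:
\begin{equation*}
\|U_t-U_s\|_2\le C(M)\bigl(|t-s|+|t-s|^{1/2}\bigr).
\end{equation*}
The hard point is justifying the Itô formula and the isometry when the integrands lie only in $L^2$ rather than in $\mathscr A$, and handling products such as $U\alpha(U)$ of two $L^2$ elements. I would first prove the identity for Picard iterates, or for approximations by bounded adapted processes, using the extension of free stochastic integration to $L^2$ cited in the introduction. I would then pass to the limit in $L^1(\mathscr A,\tau)$ using the Lipschitz estimates.
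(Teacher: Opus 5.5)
Your proposal is correct in outline, but it takes a different route from the paper.

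\textbf{The paper's route.} The paper never truncates the coefficients and never uses Picard. It runs the Euler scheme $U^{(n)}_t$ and stops each approximant at the (deterministic) time $T^{(n)}_M$ at which $\|U^{(n)}_t\|_2$ exceeds $M/3$. It proves the Cauchy property on $[0,T^{(n)}_M\wedge T^{(m)}_M]$ by taking traces in the free It\^o formula. Non-explosion comes from running the Lyapunov estimate (A-2) on the approximants themselves, carrying the discretization error $p^{(n)}_t$ along, and showing $\lim_M\liminf_n T^{(n)}_M=\infty$.

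What this buys the paper is that every Euler iterate is a bounded operator in $\mathscr A_{\le t}$: it is a finite algebraic expression in $U_0$ and Brownian increments. So the free It\^o formula and the vanishing of traces of stochastic integrals are only ever applied to bounded, piecewise-constant integrands, where the Biane--Speicher calculus is rigorous. The same explicit approximants are reused later for weak uniqueness (moments via free cumulants) and for the free Markov property.

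\textbf{Your route.} It is more modular. The truncated problem is globally Lipschitz in $L^2$, so the Picard contraction needs only the $L^2$ isometry. The difficulty the paper attributes to Picard iteration does not bite, because Picard is never used for the global step. No $p^{(n)}$ error terms appear, the Lyapunov bound is applied to the exact solution, and patching is trivial because exit times are deterministic.

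\textbf{The obstacle you flag.} The price of your route is exactly the obstacle you name, but it needs neither $L^1$ products nor an approximation argument. Write $U_t=U_0+A_t+N_t$, with $A_t=\int_0^t\alpha(U_s)\,{\rm d}s$ and $N_t$ the stochastic integral, and expand $\|U_t\|_2^2$ in the Hilbert space $L^2(\mathscr A,\tau)$. Two facts suffice:
\begin{itemize}
\item $\langle X,N_t-N_s\rangle=0$ for $X\in L^2(\mathscr A_{\le s},\tau)$, which follows from freeness of the increments;
\item the isometry, which gives $\|N_t\|_2^2$.
\end{itemize}
Together they yield $\|U_t\|_2^2=\|U_0\|_2^2+\int_0^t\bigl(2\tau(U_s\alpha(U_s))+c_s\bigr)\,{\rm d}s$ directly, where $c_s$ is precisely your correction term evaluated at $U_s$. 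No product of two $L^2$ elements ever appears.

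\textbf{Two small points.} First, since Picard iterates need not be bounded, say explicitly that $\alpha,\beta^i,\gamma^i$ are extended from $\mathscr A_{sa}$ to $L^2(\mathscr A,\tau)_{sa}$ by density using (A-1), and that this extension preserves adaptedness. Second, your reading of ``uniformly bounded'' is right: uniform on compact time intervals, and globally only when $L_1<0$. That is also all the paper's proof actually establishes.
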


\begin{remark}
There is another type of driving noise, namely, the free Lévy process \cite{anshelevichItoFormulaFree2002, gao2008}. Gao~\cite{gao2008} studied free diffusion equations driven by free L\'evy noise, establishing existence results within this framework.  We remark that results analogous to those in Theorem \ref{thm:solution} could be obtained for equations driven by free L\'evy noise.
\end{remark}

Our proof is based on the following \textit{Euler approximation} (we refer to \cite{claudia_concise_2007} for the classical case): For each $ n \in \mathbb{N},$ we construct a process $ U^{(n)}_t $ defined on the time interval $[0,\infty)$ by setting $ U^{(n)}_0 = U_0 $ and, for $ k \in \mathbb{N} \cup \{0\} $ and $ t \in (\frac{k}{n}, \frac{k+1}{n}] ,$
\begin{equation}
U_t^{(n)} = U_{\frac{k}{n}}^{(n)} + \int_{\frac{k}{n}}^{t} \alpha\bigl(U^{(n)}_{\frac{k}{n}}\bigr) \, {\rm d} s 
+ \sum_{i=1}^{l} \int_{\frac{k}{n}}^{t} \beta^i\bigl(U^{(n)}_{\frac{k}{n}}\bigr) \, {\rm d}W_s \, \gamma^i\bigl(U^{(n)}_{\frac{k}{n}}\bigr).
\end{equation}
Compared to the Picard iteration \eqref{eq:picard-iter}, the Euler scheme is more compatible with the free It\^o calculus. Specifically, the (noncommutative) martingale property of the free stochastic integral causes the diffusion term to vanish in the trace when estimating the $L^2$-norm, thereby eliminating the uncontrolled growth that would otherwise arise from diffusion coefficients. 

The proof proceeds in three steps. First, by Assumption \ref{assumption:ass1}, we localize the approximation by introducing a stopping time $T^{(n)}_M$ that keeps the process within a $L^2$-norm ball of radius $M>0.$ On the time interval $[0,T^{(n)}_M],$ an application of the free It\^o formula shows that $\{U^{(n)}_t\}_{n\geq 0}$ forms a Cauchy sequence in the noncommutative $L^2$-space. The limit of this sequence is then verified—using the free It\^o calculus—to satisfy the free SDE \eqref{eq:fSDE} locally in time, yielding a local solution. Finally, the Lyapunov‑type condition in Assumption \ref{assumption:ass2} guarantees that $M$ can be taken arbitrarily large. As a consequence, the stopping time tends to infinity, which extends the local solution to a global solution. Uniqueness of the global solution can subsequently be established. We refer to Section \ref{sec:sol} for more details. 

Furthermore, we can establish that the solution is a free Markov process.
Let $\mathscr{B} \subset \mathscr{A}$ be a von Neumann subalgebra, there exists a unique conditional expectation from $\mathscr{A}$ to $\mathscr{B}$ with respect $\tau$ \cite{pisier_non-commutative_1997}, denoted $\tau(\cdot \mid \mathscr{B}),$ such that for any $X \in \mathscr{A}$ and $Y, Z \in \mathscr{B},$
\begin{itemize}
\item $\tau (XY) = \tau (\tau (X\mid \mathscr{B}) Y);$
\item $\tau (YX Z \mid \mathscr{B}) = Y \tau (X \mid \mathscr{B}) Z.$
 \end{itemize}
Let $\mathscr{B}_1,\mathscr{B}_2$ are  von Neumann subalgebras of $\mathscr{A}$ and $\mathscr{B}\subseteq\mathscr{B}_1\cap\mathscr{B}_2.$ We say that the subalgebras $\mathscr{B}_1$ and $\mathscr{B}_2$ are \textit{free with amalgamation over $\mathscr{B}$}, or $\mathscr{B}_1$ and $\mathscr{B}_2$ are $\mathscr{B}$-free, if
\begin{equation}
\tau(X_1 X_2 \cdots X_n \mid \mathscr{B}) = 0 \; \text{for all} \; n \in \mathbb{N}, 
\end{equation}
whenever $X_j \in \mathscr{B}_{i_j}$, $i_j \in \{1, 2\}$, $i_1 \neq i_2 \neq \cdots \neq i_n$, and $\tau(X_j \mid \mathscr{B}) = 0$ (see Section 5.3 in \cite{voiculescu2000lectures}).

Let $\widetilde{\mathscr{A}}$ be the collection of all closed, densely defined operators affiliated with $\mathscr{A}$ \cite{pisier_non-commutative_1997}.  For any $U \in \widetilde{\mathscr{A}}$, let $U = V|U|$ be its polar decomposition, and define $W^*(U)$ as the von Neumann subalgebra of $\mathscr{A}$ generated by $V$ and $W^*(|U|)$.
We first recall the notion of free Markov process.

\begin{definition}[\cite{voiculescu2000lectures,voiculescu_analogues_1999,gao2008}]
Let $\{U_t\}_{t\ge0}$ be a family of self-adjoint operators in $\widetilde{\mathscr A}$.
For each $t\ge0$, define von Neumann subalgebras
\begin{equation*}
\mathscr B_{\le t}:=W^*(U_s:0\le s\le t),\qquad
\mathscr B_{\ge t}:=W^*(U_s:s\ge t),\qquad
\mathscr B_{=t}:=W^*(U_t).
\end{equation*}
We say that $\{U_t\}_{t\ge0}$ is a \emph{free Markov process} if, for every $t\ge0$, $\mathscr B_{\le t}$ and $\mathscr B_{\ge t}$ are $\mathscr B_{=t}$-free.
\end{definition}

Now suppose that $\{U_t\}_{t\ge0}$ is the unique solution to the free SDE \eqref{eq:fSDE}. Recall that for each $t\ge0,$
\begin{equation*}
\mathscr A_{\le t}=W^*(U_0,W_r:r\le t),\quad
\mathscr A_{\ge t}=W^*(U_t,W_r-W_t:r\ge t),\quad
\mathscr A_{=t}=W^*(U_t).
\end{equation*}
Then we can show that $\mathscr A_{=t} \subset \mathscr A_{\le t} \cap \mathscr A_{\ge t}.$ Therefore, by Lemma 2.1 in \cite{bianeFreeDiffusionsFree2001}, $\mathscr A_{\le t}$ and $\mathscr A_{\ge t}$ are $\mathscr A_{=t}$-free, which leads to the following proposition. We postpone its proof to Section \ref{subsec:free-markov-process}.

\begin{proposition}\label{prop:free-Markov}
Assume the hypotheses of Theorem~\ref{thm:solution}, and let $\{U_t\}_{t\geq 0}$, with $U_t \in L^2(\mathscr{A},\tau)\subset \widetilde{\mathscr{A}}$, be the unique solution to \eqref{eq:fSDE}. 
Then $\{U_t\}_{t\geq 0}$ is a free Markov process.
\end{proposition}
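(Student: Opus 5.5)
The plan is the one indicated just before the statement. We first show $\mathscr A_{=t}\subset \mathscr A_{\le t}\cap\mathscr A_{\ge t}$. We then invoke Lemma 2.1 of \cite{bianeFreeDiffusionsFree2001} to get that $\mathscr A_{\le t}$ and $\mathscr A_{\ge t}$ are $\mathscr A_{=t}$-free. Finally, we pass to the smaller algebras $\mathscr B_{\le t}\subset\mathscr A_{\le t}$ and $\mathscr B_{\ge t}\subset\mathscr A_{\ge t}$, noting that $\mathscr B_{=t}=\mathscr A_{=t}$.

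\textbf{Step 1: measurability of the solution.} The inclusion $\mathscr A_{=t}\subset\mathscr A_{\ge t}$ holds by definition. The real content is that $U_t$, and hence $W^*(U_t)$, is affiliated with $\mathscr A_{\le t}$. More generally, we want $U_s$ affiliated with $\mathscr A_{\le t}$ for $s\le t$, and $U_s$ affiliated with $\mathscr A_{\ge t}$ for $s\ge t$. We get this from the construction in the proof of Theorem~\ref{thm:solution}.
\begin{itemize}
\item Each Euler iterate $U^{(n)}_s$ is built from $U_0$, the increments of $W$ on $[0,s]$, and the coefficient maps.
\item By the causality convention (the remark following \eqref{eq:solution}), the coefficients do not leave the subalgebra containing their argument.
\item The free stochastic integral $\int_{k/n}^{s}\beta^i(X)\,dW_r\,\gamma^i(X)$ is an $L^2$-limit of Riemann sums $\sum \beta^i(X)(W_{r_{j+1}}-W_{r_j})\gamma^i(X)$, each lying in $W^*(X,W_r:r\le s)$.
\end{itemize}
Hence by induction on $k$, $U^{(n)}_s\in L^2(\mathscr A_{\le s},\tau)$. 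The noncommutative $L^2$-space of a von Neumann subalgebra is closed in $L^2(\mathscr A,\tau)$, and $U^{(n)}_s\to U_s$ in $L^2$. Therefore $U_s\in L^2(\mathscr A_{\le s},\tau)\subset L^2(\mathscr A_{\le t},\tau)$ for $s\le t$.

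For $s\ge t$ we use the uniqueness in Theorem~\ref{thm:solution} for the equation restarted at time $t$. The restarted initial value is $U_t$, and the driving process is the shifted free Brownian motion $W_{t+r}-W_t$. This requires $U_t$ to be free from $\{W_{t+r}-W_t\}_{r\ge0}$, which holds because $U_t$ is affiliated with $\mathscr A_{\le t}$ and freeness of increments gives $\mathscr A_{\le t}$ free from the future increments. The same Euler argument then places $U_s$ in $L^2(\mathscr A_{\ge t},\tau)$.

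\textbf{Step 2: unbounded operators.} Since $U_s$ is unbounded in general, we must pass from ``$U_s$ lies in the $L^2$-space of a subalgebra'' to ``$W^*(U_s)$ is contained in that subalgebra''. A self-adjoint element of $L^2(\mathscr B,\tau)$ is a closed densely defined operator affiliated with $\mathscr B$. Its spectral projections, and hence bounded Borel functions of it, therefore lie in $\mathscr B$. This gives $W^*(U_s)\subset\mathscr B$, using the definition of $W^*(U)$ via the polar decomposition.

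\textbf{Step 3: conclusion.} Steps 1 and 2 give $\mathscr A_{=t}\subset\mathscr A_{\le t}\cap\mathscr A_{\ge t}$. The hypotheses of Lemma 2.1 in \cite{bianeFreeDiffusionsFree2001} are that $\mathscr A_{\le t}$ contains $\mathscr A_{=t}$, and that $\mathscr A_{\ge t}$ is generated by $\mathscr A_{=t}$ together with an algebra free from $\mathscr A_{\le t}$, namely the future increments. The lemma then yields that $\mathscr A_{\le t}$ and $\mathscr A_{\ge t}$ are $\mathscr A_{=t}$-free.

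We have $\mathscr B_{\le t}\subset\mathscr A_{\le t}$, $\mathscr B_{\ge t}\subset\mathscr A_{\ge t}$, and $\mathscr B_{=t}=\mathscr A_{=t}$. Freeness with amalgamation passes to subalgebras containing $\mathscr B_{=t}$: the defining condition on alternating centered products only involves fewer elements. Hence $\mathscr B_{\le t}$ and $\mathscr B_{\ge t}$ are $\mathscr B_{=t}$-free for every $t\ge 0$, which is the claimed free Markov property.

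The main obstacle is Step 1, specifically the forward-in-time affiliation of $U_s$ with $\mathscr A_{\ge t}$. It depends on the restarted equation having a unique solution generated only by $U_t$ and the future increments, and on the causality of the coefficient maps. I would handle it by rerunning the Euler scheme from time $t$ and invoking the uniqueness part of Theorem~\ref{thm:solution}.
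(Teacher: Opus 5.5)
Your proposal is correct and follows the paper's proof essentially step for step. Adaptedness of the Euler iterates plus $L^2$-closedness gives $\mathscr B_{\le t}\subseteq\mathscr A_{\le t}$, a restart at time $t$ driven by $W_{t+r}-W_t$ gives $\mathscr B_{\ge t}\subseteq\mathscr A_{\ge t}$, and \cite[Lemma 2.1]{bianeFreeDiffusionsFree2001} followed by restriction to subalgebras finishes. You are more explicit than the paper about two points: affiliation of the possibly unbounded $U_s$, and the need for uniqueness to identify the restarted Euler limit with $U_{t+r}$. One caveat: the restarted initial value $U_t$ is only known to lie in $L^2(\mathscr A,\tau)$, so it is not literally covered by Theorem~\ref{thm:solution}, which assumes $U_0\in\mathscr A_{sa}$. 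That step should instead rest on the $L^2$ Gronwall argument in the proof of Proposition~\ref{prop:solution}, a point the paper glosses over as well.
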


\begin{remark}
Let $C_b(\R)$ be all bounded continuous functions on $\R$. If the solution $U_t \in \mathscr{A_t}$, then there exists a family of transition operators (see \cite[Section 4]{bianeFreeDiffusionsFree2001})
\begin{equation*}
P_{s,t}:C_b(\R)\to C_b(\R),\qquad 0\le s\le t,
\end{equation*}
such that for every $f\in C_b(\R)$,
\begin{equation*}\label{eq:trans-oper}
\tau\!\left(f(U_t)\mid \mathscr B_{\le s}\right)
=
\tau\!\left(f(U_t)\mid \mathscr B_{=s}\right)
=
\bigl(P_{s,t}f\bigr)(U_s).
\end{equation*}
Moreover, since the free SDE are autonomous, we can also obtain that the transition operators
$\{P_{s,t}\}_{0\le s\le t}$ is time-homogeneous, i.e., 
$P_{s,t}=P_{t-s},$ $0\le s\le t.$
Consequently, if we set $P_t:=P_{0,t},$ $t\ge0,$
then $\{P_t\}_{t\ge0}$ is a Markov semigroup on $C_b(\mathbb{R})$.
\end{remark}


\subsection{Stationary distribution for free SDEs.} 

Our second main result concerns stationary distributions for free SDEs. In contrast, existing results \cite{guionnet_free_2009, shlyakhtenko_lower_2009, dabrowski_free_2021} are mainly restricted to the free diffusion equation \eqref{eq:fDiff} and its variants. In particular, for the free diffusion equation \eqref{eq:fDiff} with $f(x)=\frac{1}{2}x^2+\frac{g}{4}x^4,$ where $g$ is a negative constant sufficiently close to zero, Biane and Speicher \cite{bianeFreeDiffusionsFree2001} established the existence and uniqueness of a global solution, as well as its uniform boundedness in the operator norm, and conjectured the existence of a stationary distribution for such equations. Later, Guionnet and Shlyakhtenko's work provided an affirmative answer to this question (see \cite[Theorem 2.2]{guionnet_free_2009}). In this section, we provide sufficient conditions for the existence of a stationary distribution, thereby extending Guionnet and Shlyakhtenko's work to the general equation \eqref{eq:fSDE}. First, we recall some notions and definitions. 

Let $B_b(\mathbb{R})$ be the set of all bounded Borel functions on $\mathbb{R}$. 
For any $U \in \mathscr{A}_{sa}$, there exists a unique probability measure $\mu_U$ on $\mathbb{R}$ such that
for every $f \in B_b(\mathbb{R})$
\begin{equation*}
\tau\bigl(f(U)\bigr) = \int_{\mathbb{R}} f(x) \, \mu_U({\rm d}x).
\end{equation*}
The measure $\mu_U$ is called the \textit{distribution} of $U$. 
Let $\mathcal P(\mathbb R)$ denote the space of all Borel probability measures on $\mathbb R$ and $\mathcal P_2(\mathbb R)$ the space of probability measures on $\mathbb R$ with finite second moment, i.e.,
\begin{equation*}
\mathcal P_2(\mathbb R)=\left\{\mu \in \mathcal P(\mathbb R):\int_{\mathbb R} |x|^2 \,\mu({\rm d}x) < \infty\right\}.
\end{equation*}
For $\mu,\nu \in \mathcal P_2(\mathbb R)$, the free Wasserstein metric \cite{Biane2001Wasserstein} between $\mu$ and $\nu$ is defined by
\begin{equation*}
W_2(\mu,\nu)=\inf\left\{\|X-Y\|_2:\mu_X = \mu,\ \mu_Y = \nu\right\},
\end{equation*}
where the infimum is taken over all self‑adjoint operators $X, Y$ in a tracial $W^*$-probability space with the prescribed distributions. We remark that the free Wasserstein metric coincides with the classical Wasserstein metric for the one-variable case.
We first give the definition of the stationary distribution for free SDEs.

\begin{definition}[Stationary distribution]
Suppose that $\{U_t\}_{t\ge 0}$ is a solution to the free SDE \eqref{eq:fSDE}. A probability measure $\mu\in\mathcal P(\mathbb R)$ is called a \textit{stationary distribution} for the solution (or the equation) if, for every $t\ge0$, 
\begin{equation}\label{eq:stationary}
\tau(f(U_t))=\int_{\mathbb R} f(x)\,\mu({\rm d} x),
\qquad \forall f\in B_b(\mathbb R).
\end{equation}
\end{definition}

Our results rely on two main assumptions. The first one is that we require the {\it weak uniqueness} of the equation. Note that the uniqueness obtained in Theorem \ref{thm:solution} referred to as \textit{strong uniqueness}: whenever $\{U_t\}_{t\ge0}$ and $\{\widehat{U}_t\}_{t\ge0}$ are two operator-valued processes satisfying \eqref{eq:solution} with the same driving free Brownian motion and the same initial value, one has $U_t=\widehat{U}_t$ for all $t\ge0.$ If, on the other hand, we are only given the drift and diffusion coefficients, and seek a tuple $\{U_t, W_t\}$ in some noncommutative probability space $(\mathscr{A}, \tau)$ such that \eqref{eq:solution} holds, then $\{U_t, W_t\}_{t\ge0}$ is called a \textit{weak} solution to \eqref{eq:fSDE}. 

\begin{definition}
We say that the free SDE \eqref{eq:fSDE} admits \textit{weak uniqueness} if the following holds: for any two weak solutions $\{U_t, W_t\}_{t\geq 0}$ and $\{\widehat{U}_t, \widehat{W}_t\}_{t \geq 0}$ to \eqref{eq:fSDE}, whenever $U_0$ and $\widehat{U}_0$ have the same distribution, then $U_t$ and $\widehat{U}_t$ have the same distribution for all $t \geq 0.$
\end{definition}

\begin{remark}
Assume that Assumptions \ref{assumption:ass1} and \ref{assumption:ass2} hold. One can prove that the free SDE \eqref{eq:fSDE} admits \textit{weak uniqueness}. We refer to Proposition \ref{prop:same-law} and its proof. 
\end{remark}

The second assumption concerns the required conditions on the drift and diffusion coefficients.

\begingroup
\edef\originalAssumptionFormat{\theassumption}
\newcounter{assumptionbackup}
\setcounter{assumptionbackup}{\value{assumption}}

\setcounter{assumption}{0}
\renewcommand{\theassumption}{(B-\arabic{assumption})}

\begin{assumption}\hypertarget{assumption:ass3}{}\label{assumption:ass3}
For all $M >0,$ 
there exists a constant $L'_\alpha \in \mathbb{R}$ (depends on $\alpha$ and $M$), such that
\begin{equation}\label{eq:alpha-onesideLip-loc}
\langle U-V,\alpha(U)-\alpha(V)\rangle\leq L_\alpha' \|U -V\|^2_2
\end{equation}
for any operators $U, V \in \mathscr{A}_{sa}$ with $\|U\|, \|V\| \leq M$.

Moreover, assume that the diffusion coefficients $\beta^i$ and $\gamma^i$ ($i=1,\ldots,l$) are locally operator Lipschitz in the $L^2$-norm with the Lipschitz constants $L_{\beta^i}$ and $L_{\gamma^i}$, respectively. 
\end{assumption}

\setcounter{assumption}{\value{assumptionbackup}}
\renewcommand{\theassumption}{\originalAssumptionFormat}
\endgroup

Now we are ready to present our result on the stationary distribution.

\begin{theorem}\label{thm:stationary}
Suppose that the free SDE \eqref{eq:fSDE} admits weak uniqueness, and for any free Brownian motion $\{W_t\}_{t \geq 0}$ and initial value $U_0 \in \mathscr{A}_{sa}$ that is free from $\{W_t\}_{t\geq 0},$ there exists a unique strong solution $\{U_t\}_{t\ge 0}$ to \eqref{eq:fSDE} such that $U_t \in \mathscr{A}_{\le t}$ and $\sup_{t\geq 0} \|U_t\|< \infty$. Assume further that Assumption \hyperlink{assumption:ass3}{(B-1)} holds. If 
\begin{equation}\label{eq:stationary-condition}
L'_\alpha < -\frac{1}{2}(\sum_{i=1}^{l} (L_{\beta^i} K_{\gamma^i}+ L_{\gamma^i} K_{\beta^i}))^2,
\end{equation}
then the solution $\{U_t\}_{t\ge 0}$ admits a unique stationary distribution $\mu$. Moreover, for any initial value, the distribution of $U_t$ converges to $\mu$ in $W_2$ as $t \to \infty$.
\end{theorem}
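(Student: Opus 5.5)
The strategy is a synchronous coupling followed by a contraction argument in $W_2$. Fix one free Brownian motion $\{W_t\}$ and two initial values $U_0, V_0 \in \mathscr{A}_{sa}$, both free from $\{W_t\}$. I would realize them in a common tracial $W^*$-probability space, for example by taking a free product so that $W^*(U_0,V_0)$ is free from $\{W_t\}$. Let $U_t, V_t$ be the corresponding unique strong solutions. By hypothesis $M:=\sup_t(\|U_t\|\vee\|V_t\|)<\infty$, so Assumption \hyperlink{assumption:ass3}{(B-1)} and the local Lipschitz and bound constants $L'_\alpha, L_{\beta^i}, L_{\gamma^i}, K_{\beta^i}, K_{\gamma^i}$ may all be taken at this level $M$. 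The first goal is the estimate
\begin{equation*}
\|U_t-V_t\|_2^2\le e^{-2\lambda t}\|U_0-V_0\|_2^2,\qquad \lambda:=-L'_\alpha-\tfrac12\Bigl(\sum_{i=1}^l (L_{\beta^i}K_{\gamma^i}+L_{\gamma^i}K_{\beta^i})\Bigr)^2>0 .
\end{equation*}
To get it, apply the free It\^o formula to $\tau((U_t-V_t)^2)$. The martingale terms have zero trace. The drift contributes $2\langle U_t-V_t,\alpha(U_t)-\alpha(V_t)\rangle\le 2L'_\alpha\|U_t-V_t\|_2^2$.

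For the It\^o correction, the free It\^o rule $dW_t\,A\,dW_t=\tau(A)\,dt$ gives $\sum_{i,j}\tau(\Delta\beta^i\ldots)$-type terms of the form
\[
\tau\!\left(\gamma^i_\Delta {\gamma^j_\Delta}^*\right)\tau\!\left({\beta^j_\Delta}^*\beta^i_\Delta\right),
\]
where $\beta^i_\Delta\, dW\, \gamma^i_\Delta$ denotes the difference $\beta^i(U)dW\gamma^i(U)-\beta^i(V)dW\gamma^i(V)$. I would split this difference as $(\beta^i(U)-\beta^i(V))\,dW\,\gamma^i(U)+\beta^i(V)\,dW\,(\gamma^i(U)-\gamma^i(V))$. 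By Cauchy--Schwarz the correction is then bounded by $\bigl(\sum_i \|\cdot\|_2\|\cdot\|_2\bigr)^2\le\bigl(\sum_i(L_{\beta^i}K_{\gamma^i}+L_{\gamma^i}K_{\beta^i})\bigr)^2\|U_t-V_t\|_2^2$. This is exactly the structure of \eqref{eq:stationary-condition}, and Gronwall then yields the exponential contraction.

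The main obstacle is this It\^o-correction bookkeeping. Each cross term $\tau(A\,dW\,B\;C\,dW\,D)=\tau(BC)\tau(DA)\,dt$ must be bounded by a product of $L^2$ norms, and one must justify using $L^2$-Lipschitz and $L^2$-bound constants, which requires $\|\tau(BC)\|\le\|B\|_2\|C\|_2$. This works, but the constants must be tracked carefully.

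Next come existence and uniqueness of the stationary law. Since $W_2(\mu_{U_t},\mu_{V_t})\le\|U_t-V_t\|_2$, the contraction shows that laws from different starting points merge at rate $e^{-\lambda t}$. For existence, I would show that $t\mapsto\mu_{U_t}$ is Cauchy in $W_2$. Take $V_0$ to have the law of $U_s$; by weak uniqueness, applied through time-homogeneity (restarting the equation at time $s$ with the free Brownian motion $W_{s+\cdot}-W_s$, which is free from $U_s\in\mathscr A_{\le s}$), $V_t$ has the law of $U_{t+s}$. Hence $W_2(\mu_{U_t},\mu_{U_{t+s}})\le e^{-\lambda t}\|U_0-V_0\|_2\le 2Me^{-\lambda t}$. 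Since $(\mathcal P_2(\mathbb R),W_2)$ is complete, $\mu_{U_t}\to\mu$ for some $\mu$, and all these laws are supported in $[-M,M]$.

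To see that $\mu$ is stationary, start from $U_0\sim\mu$, realized as a bounded operator. For each $s$, start a second solution from an operator with law $\mu_{U_s}$; by the restart argument it has law $\mu_{U_{t+s}}$ at time $t$. The contraction then gives
\[
W_2(\mu_{\widehat U_t},\mu)\le W_2(\mu_{\widehat U_t},\mu_{U_{t+s}})+W_2(\mu_{U_{t+s}},\mu)\le e^{-\lambda t}W_2(\mu,\mu_{U_s})+o(1)\to0
\]
as $s\to\infty$, so $\widehat U_t\sim\mu$ for all $t$, and \eqref{eq:stationary} holds on $B_b$ by equality of measures. Here I would use that the infimum in $W_2$ is attained for compactly supported measures (monotone coupling via functions of one self-adjoint operator) in order to choose the coupled initial values.

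Uniqueness follows because two stationary laws $\mu,\nu$ satisfy $W_2(\mu,\nu)\le e^{-\lambda t}W_2(\mu,\nu)$ for every $t$. Convergence from an arbitrary initial value follows by coupling it with a $\mu$-distributed start, which gives $W_2(\mu_{U_t},\mu)\le e^{-\lambda t}W_2(\mu_{U_0},\mu)\to0$.
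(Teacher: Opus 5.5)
Your proposal is correct and follows essentially the same route as the paper: the synchronous-coupling $L^2$ contraction (Proposition \ref{prop:Stability-prop-1}, with your $2\lambda$ equal to the paper's $C_4$), the $W_2$-Cauchy argument obtained by restarting at time $s$ and invoking weak uniqueness (Proposition \ref{prop:Cauchy-prop-2}), and stationarity and uniqueness obtained by coupling with a $\mu$-distributed start. Your small variations fill in points the paper leaves implicit and change nothing essential: you realize the time-$s$ law directly in a free product rather than through a free copy of the dynamics, and you justify that $\mu$ is compactly supported and that the $W_2$ infimum is attained by the quantile coupling.
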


\begin{remark}\label{rmk:fdiff-weak}
The free diffusion equation \eqref{eq:fDiff} with $f(x)=\frac{1}{2}x^2+\frac{g}{4}x^4$ admits weak uniqueness (see Remark \ref{rmk:weak-unique-free-diffusion} in Section \ref{subsec:weak-unique}). 
Moreover, it is easy to check that Assumption \ref{assumption:ass3} holds for this case (see Section \ref{sec:exa}). Therefore, we provide an alternative answer to Biane and Speicher’s question for the one-variable case. 
\end{remark}

For the proof of Theorem \ref{thm:stationary}, we begin by proving that the perturbation of solution is bounded by the perturbation of initial value, a property we call {\it stability}. The stability of solutions then implies that the distribution of the solution $\{U_t\}_{t\ge0}$ is Cauchy in $W_2$, and hence converges to a limiting distribution $\mu.$ Finally, Theorem \ref{thm:stationary} is established by showing that for any initial value has the distribution $\mu$, the distribution of the corresponding solution $\{U_t\}_{t\ge0}$ remains $\mu$; what is, $\mu$ is a stationary distribution. 

As a corollary, we can prove that the distribution of the solution $U_t$ converges to the stationary distribution at an exponential rate.
\begin{corollary}\label{cor:exp-conv}
Assume the hypotheses of Theorem~\ref{thm:stationary}. Let $\mu$ be the stationary distribution for the solution $\{U_t\}_{t\geq 0}.$ Then for every $ f \in C_c^3(\mathbb{R}) $, there exist constants $ C_f, C >0$, independent of $ t $, such that 
\begin{equation}\label{eq:exp-bound}
\bigl| \tau\bigl(f(U_t)\bigr) - \int_{\mathbb{R}} f(x) \, \mu( {\rm d} x) \bigr| \le C_f e^{-C t} \|U_0\|_2.
\end{equation}
\end{corollary}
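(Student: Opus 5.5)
The plan is to combine the stability estimate behind Theorem~\ref{thm:stationary} with a coupling argument and a Lipschitz bound for smooth test functions. Set
\[
\lambda:=-L'_\alpha-\tfrac12\Bigl(\sum_{i=1}^{l}(L_{\beta^i}K_{\gamma^i}+L_{\gamma^i}K_{\beta^i})\Bigr)^2>0 ,
\]
which is positive by \eqref{eq:stationary-condition}.

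\textbf{Step 1 (stability).} I will use the stability estimate obtained in the proof of Theorem~\ref{thm:stationary}. Take two solutions $U_t,V_t$ driven by the same free Brownian motion, with initial values $U_0,V_0$ both free from $\{W_t\}$. Apply the free It\^o formula to $\|U_t-V_t\|_2^2$. The martingale part vanishes in the trace. The drift contributes $2\langle U_t-V_t,\alpha(U_t)-\alpha(V_t)\rangle\le 2L'_\alpha\|U_t-V_t\|_2^2$ by Assumption~\hyperlink{assumption:ass3}{(B-1)}, with $M=\sup_t\max(\|U_t\|,\|V_t\|)$. The quadratic variation term is bounded by splitting $\beta^i(U)\otimes\gamma^i(U)-\beta^i(V)\otimes\gamma^i(V)$ into the two pieces $(\beta^i(U)-\beta^i(V))\otimes\gamma^i(U)$ and $\beta^i(V)\otimes(\gamma^i(U)-\gamma^i(V))$, and then using the local Lipschitz constants together with \eqref{eq:bound-loc}. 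Gronwall then gives
\[
\|U_t-V_t\|_2\le e^{-\lambda t}\|U_0-V_0\|_2 .
\]

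\textbf{Step 2 (coupling with the stationary law).} Enlarge the algebra by a free product so that it contains an operator $V_0$ with distribution $\mu$. Choose $V_0$ free from $\{W_t\}$; its relationship with $U_0$ can be arbitrary, and the simplest choice is to take $V_0$ free from both $U_0$ and $\{W_t\}$. By Theorem~\ref{thm:stationary} and weak uniqueness, the strong solution $V_t$ started at $V_0$ has distribution $\mu$ for every $t$. Since $\|U_0-V_0\|_2\le \|U_0\|_2+\|V_0\|_2$, Step 1 yields
\[
\|U_t-V_t\|_2\le e^{-\lambda t}\bigl(\|U_0\|_2+m_2(\mu)^{1/2}\bigr),
\]
where $m_2(\mu)$ is the second moment of $\mu$, which is finite since $\mu$ is compactly supported by the uniform operator bound.

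\textbf{Step 3 (test functions).} Because $f\in C_c^3(\mathbb R)$, it is Lipschitz in the $L^2$-sense on self-adjoint operators. For instance, writing $f(x)=\int\hat f(\xi)e^{i\xi x}\,d\xi$ with $\int|\xi\hat f(\xi)|\,d\xi<\infty$ (which holds for $C^3_c$ functions), the Duhamel formula gives
\[
\|f(U)-f(V)\|_2\le \Bigl(\int|\xi||\hat f(\xi)|\,d\xi\Bigr)\|U-V\|_2 .
\]
Hence
\[
|\tau(f(U_t))-\textstyle\int f\,d\mu|=|\tau(f(U_t))-\tau(f(V_t))|\le \|f(U_t)-f(V_t)\|_2\le C'_f e^{-\lambda t}\bigl(\|U_0\|_2+m_2(\mu)^{1/2}\bigr).
\]

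\textbf{Main obstacle.} The remaining issue is reaching the exact form $C_fe^{-Ct}\|U_0\|_2$, with no additive constant. I see two routes. The first is to absorb the constant $m_2(\mu)^{1/2}$ into $C_f$ when $\|U_0\|_2$ is bounded below. The second, which I would try first, is to compare $U_t$ with the solution started from $0$ instead of from $V_0$. Stability gives $\|U_t-U^0_t\|_2\le e^{-\lambda t}\|U_0\|_2$, so it then remains to show that $\tau(f(U^0_t))$ converges to $\int f\,d\mu$ exponentially fast. I expect that to come from the Cauchy estimate $W_2(\mu_{U^0_t},\mu_{U^0_{t+s}})\lesssim e^{-\lambda t}$ used in the proof of Theorem~\ref{thm:stationary}. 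The catch is that this estimate still carries a constant depending on $\mu$, not on $\|U_0\|_2$. For that reason I expect to need to interpret $C_f$ as allowed to depend on $\mu$, or equivalently to treat the case $U_0=0$ separately.
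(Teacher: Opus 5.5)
\textbf{Overall.} Your Steps 1--3 are the paper's argument: the contraction estimate of Proposition~\ref{prop:Stability-prop-1}, a coupling with a stationary solution driven by a common free Brownian motion, and the Fourier/Duhamel bound $\|f(U)-f(V)\|_2\le(\int|\xi\hat f(\xi)|\,{\rm d}\xi)\,\|U-V\|_2$. They correctly give
\[
\Bigl|\tau\bigl(f(U_t)\bigr)-\int_{\mathbb R}f\,{\rm d}\mu\Bigr|\le C'_f\,e^{-\lambda t}\bigl(\|U_0\|_2+m_2(\mu)^{1/2}\bigr),\qquad \lambda=\tfrac{C_4}{2}.
\]
The obstacle you flag is real and cannot be removed, because the corollary as stated is false.

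\textbf{Coupling and rate.} The paper re-drives both solutions by a fresh $\widehat W$ free from $W^*(u,v)$ and transfers laws by weak uniqueness. You instead keep $W$ and adjoin $V_0$ by a free product; that works too. But Step~1 needs $W^*(U_0,V_0)$ to be jointly free from $\{W_t\}$, and freeness of $U_0$ and $V_0$ separately from $W$ does not imply this. So ``its relationship with $U_0$ can be arbitrary'' must be read in that sense; your free-product choice is fine. Your rate is also the right one. Proposition~\ref{prop:Stability-prop-1} bounds the squared distance by $e^{-C_4t}\|u-v\|_2^2$, so the norm decays like $e^{-C_4t/2}$, whereas the paper's proof writes $e^{-C_4t}$.

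\textbf{Why the statement fails.} For $U_0=0$ the right-hand side is $0$ for any $C_f,C$. The claim would then force $\mu_{U_t}=\mu$ for all $t$, and at $t=0$ this gives $\mu=\delta_0$. A concrete counterexample is the free Ornstein--Uhlenbeck equation ${\rm d}U_t=-\tfrac12U_t\,{\rm d}t+{\rm d}W_t$. It satisfies every hypothesis of Theorem~\ref{thm:stationary}:
\begin{itemize}
\item $L'_\alpha=-\tfrac12$, and the diffusion coefficients are constant, so the right side of \eqref{eq:stationary-condition} can be made as small as you like;
\item $\|U_t\|\le\|U_0\|+2$;
\item weak uniqueness holds by Proposition~\ref{prop:same-law}.
\end{itemize}
Its stationary law is the semicircle law of variance $1$. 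From $U_0=0$, however, $U_t$ is semicircular of variance $1-e^{-t}$. Take $f(x)=x^2\chi(x)$ with $\chi\in C_c^\infty$ and $\chi\equiv1$ on $[-3,3]$; the left-hand side is then $e^{-t}\neq0$. Consequently, in your route (2) the term $|\tau(f(U^0_t))-\int f\,{\rm d}\mu|$ is genuinely nonzero, and the case $U_0=0$ cannot be ``treated separately''. Route (1) only gives a constant that depends on a lower bound for $\|U_0\|_2$.

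\textbf{Where the paper's proof breaks.} It reaches $\|U_0\|_2$ only through $\|u-v\|_2\le 2\|u\|_2$. That step uses $\|v\|_2=\|u\|_2$, i.e.\ that $u$ already has law $\mu$. In that case $U^u_t$ is stationary and the left-hand side vanishes identically. For a general $u$ and $v\sim\mu$ the step fails.

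\textbf{Correct form.} Your bound is the correct version of the corollary, and you can sharpen it to $C'_f\,e^{-\lambda t}\,W_2(\mu_{U_0},\mu)$:
\begin{itemize}
\item realize an optimal (quantile) coupling $(U_0',V_0')$ of $\mu_{U_0}$ and $\mu$ by commuting bounded operators;
\item take its free product with the algebra of $W$;
\item use weak uniqueness to replace $U_0$ by $U_0'$.
\end{itemize}
This bound vanishes when $\mu_{U_0}=\mu$, as it must.
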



\subsection{Organization of the paper.} The remainder of the paper is organized as follows. Section \ref{sec:pre} collects the necessary preliminaries on free stochastic calculus and free It\^o formula. Section \ref{sec:sol} presents the proof of the existence and uniqueness of a global solution (Theorem \ref{thm:solution}) and shows the solution is a free Markov process (Proposition~\ref{prop:free-Markov}). Section \ref{sec:stat} is devoted to the study of long‑time behavior and contains the proof of Theorem \ref{thm:stationary} on the existence and uniqueness of a stationary distribution and the exponential convergence estimate (Corollary \ref{cor:exp-conv}). Finally, Section \ref{sec:exa} provides concrete examples that satisfy the assumptions of our theorems.

\section{Preliminaries}\label{sec:pre}
In this section, we recall some basic notions and tools for studying free SDEs. Let $(\mathscr{A},\tau)$ be a noncommutative probability space. 

\begin{definition}[Free independence \cite{NS2006, mingoFreeProbabilityRandom2017}]
Subalgebras $\mathscr{A}_1,\dots,\mathscr{A}_m \subset \mathscr{A}$ are said to be \emph{freely independent} if for every $n\in\mathbb{N}$, every choice of indices $i_1,\dots,i_n \in \{1,\dots,m\}$ with $i_k \neq i_{k+1} \text{ for all } k,$
and every choice of operators $X_k \in \mathscr{A}_{i_k}$ satisfying $\tau(X_k)=0$ for all $k=1,\dots,n$, one has
\begin{equation*}
\tau(X_1\cdots X_n)=0.
\end{equation*}
\end{definition}

\begin{definition}[Semicircular element]
A self‑adjoint operator $W \in \mathscr{A}$ is called a \emph{semicircular element} with mean $0$ and variance $\sigma^2 > 0$ if its moments are given by the semicircular law $\mu_{\mathrm{sc}}$, i.e.,
\begin{equation*}
\tau(W^k) = \int_{\mathbb{R}} t^k \, \mu_{\mathrm{sc}}( {\rm d} t), \qquad 
\mu_{\mathrm{sc}}({\rm d}t) = \frac{\sqrt{4\sigma^{2}-t^{2}}}{2\pi\sigma^{2}} \, \mathbf{1}_{\{|t|\le 2\sigma\}} \, {\rm d}t.
\end{equation*}
\end{definition}

It is well known that semicircular elements are the free analogues of Gaussian random variables. Their continuous‑time counterpart, free Brownian motion, constitutes the fundamental driving noise in free stochastic calculus. 

\begin{definition}[Free Brownian motion]
A stochastic process $\{W_t\}_{t\ge 0}$ is called a \emph{free Brownian motion} if it satisfies 
\begin{itemize}
\item $W_0=0;$
\item The increment $W_t-W_s$ is freely independent from $W^*(W_r: 0\leq r\leq s)$ for all $0\leq s <t;$
\item The increment $W_t-W_s$ is a semicircular element with mean $0$ and variance $t-s$ for all $0 \leq s <t.$
\end{itemize}
\end{definition}


Recall that $\mathscr A_{\le t}=W^*(U_0,W_r:r\le t)$ where $U_0\in\mathscr A_{sa}$ is free from $\{W_t\}_{t\ge 0}.$ It is clear that $\{\mathscr A_{\le t}\}_{t \geq 0}$ is a filtration on $\mathscr{A}$, i.e., an increasing family of subalgebras
of $\mathscr{A}.$ 
Given $T\in[0,\infty),$ suppose that $ \beta_t,  \gamma_t \in \mathscr A_{\le t}$ and $\|\beta_t\| \cdot \|\gamma_t\| \in L^2([0,T])$. Let $\mathcal{P}: 0= t_0 \leq t_1 \leq \cdots \leq t_n=T$ be a partition of $ [0,T] $, denoted by $ \Delta_n $. 
Consider the finite sum 
\begin{equation*}
I_n:=\sum_{i=1}^{n}\beta_{t_{i-1}}(W_{t_i}-W_{t_{i-1}})\gamma_{t_{i-1}}.
\end{equation*}
It is known \cite{bianeStochasticCalculusRespect1998} that $I_n$ converges in the operator norm as $d(\Delta_n)\rightarrow 0,$ where $ d(\Delta_n):=\max_{1\le k\le n}(t_{k}-t_{k-1}),$  and the convergence does not depend on the partition. Hence, the \emph{free stochastic integral} \cite{bianeStochasticCalculusRespect1998,karginFreeStochasticDifferential2011} of $\{\beta_t\}_{t\geq 0}$ and $\{\gamma_t\}_{t\geq 0}$ is defined as the limit (in the operator norm) of $I_n$, denoted by
\begin{equation*}
I=\int_{0}^{T}\beta_t  {\rm d}W_t \gamma_t.
\end{equation*}

A key ingredient to show the convergence of $I_n$ is the following free Burkholder-Gundy (B-G) inequality \cite{bianeStochasticCalculusRespect1998}: 
\begin{equation}\label{eq:B-G}
\left\|\int_0^T \beta_t {\rm d}W_t \gamma_t \right\| \leq 2 \sqrt{2} \left( \int_{0}^T \|\beta_t\|^2 \cdot \|\gamma_t\|^2 {\rm d} t \right)^{1/2},
\end{equation}  
and for the $L^2$-norm, we have the following free It\^o isometry:
\begin{equation}\label{eq:isometry}
\left\|\int_0^T \beta_t {\rm d}W_t \gamma_t \right\|_2 =\left( \int_{0}^T \|\beta_t\|^2_2 \cdot \|\gamma_t\|^2_2 {\rm d} t \right)^{1/2}. 
\end{equation}  
 
The \textit{free It\^o formula} \cite{bianeStochasticCalculusRespect1998, anshelevichItoFormulaFree2002} plays the same role in free stochastic calculus as the classical It\^o formula does in classical stochastic calculus. Formally, the multiplication rules are
\begin{itemize}
\item $\alpha_t {\rm d} t \cdot \beta_t {\rm d} t  = \alpha_t {\rm d} t\cdot \beta_t  {\rm d}W_t \gamma_t = \alpha_t {\rm d}W_t \beta_t \cdot  \gamma_t {\rm d} t=0,$
\item $\alpha_t {\rm d}W_t \beta_t \cdot \gamma_t {\rm d}W_t \zeta_t  =\tau \left(\beta_t \gamma_t \right)  \alpha_t \zeta_t {\rm d}t$
\end{itemize}
for any $ \alpha_t, \beta_t, \gamma_t, \zeta_t \in \mathscr A_{\le t}$ such that free stochastic integrals are well-defined. Specifically, let 
\begin{equation*}
U_t =U_0 + \int_0^t \alpha_s {\rm d}t + \int_0^t \beta_t {\rm d}W_t \gamma_t.
\end{equation*}
The following free It\^o formula (in integration form) is used frequently in this paper:
\begin{equation}\label{eq:free-ito-polynomial}
\begin{split}
U_t^2 =  U_0^2 &+ \int_0^t  (U_s \alpha_s+ \alpha_s U_s) {\rm d}s + \int_0^t \tau (\beta_s \gamma_s) \beta_s \gamma_s {\rm d}s\\
& + \int_0^t U_s \beta_s {\rm d}W_s \gamma_s + \beta_s {\rm d}W_s \gamma_s U_s.
\end{split}
\end{equation}

\begin{remark}
Suppose that $ \beta_t,  \gamma_t \in L^2(\mathscr A_{\le t},\tau)$ and $\|\beta_t\|_2 \cdot \|\gamma_t\|_2 \in L^2([0,T])$. Then by the It\^o-isometry \eqref{eq:isometry}, the free stochastic It\^o integral $I = \int_0^T\beta_t\,{\rm d}W_t\,\gamma_t$ is well-defined in $L^2(\mathscr{A},\tau)$ (see \cite{bianeStochasticCalculusRespect1998, schluechtermannNumericalSolutionFree2022a}). 
\end{remark}

 
\section{Well-posedness of free SDEs}\label{sec:sol}

\subsection{The Euler scheme}

In the work of Biane and Speicher \cite{bianeFreeDiffusionsFree2001}, an Euler scheme was introduced for establishing certain regularity results for the diffusion equations. Motivated by the classical case \cite{claudia_concise_2007}, this scheme can also be employed to demonstrate the well-posedness of free stochastic differential equations. Without loss of generality, we set $l = 1$ in \eqref{eq:fSDE}; the general case follows readily by applying the triangle inequality. Recall the Euler scheme: Fix $n \in \mathbb{N},$ first in the interval $\bigl [0, \frac{1}{n} \bigr],$ we set $U^{(n)}_0 = U_0\in \mathscr{A}_{sa},$ and for $t \in \bigl(0, \frac{1}{n} \bigr]$, define
\begin{equation*}
 U_t^{(n)}=U_{0}+\int_{0}^{t}\alpha(U_0) {\rm d}s + \int_{0}^{t}\beta(U_0) {\rm d}W_s \gamma(U_0),
\end{equation*}
and then continue by induction on $k \in \mathbb{N},$ assuming $U_t^{(n)}$ has been defined for $t \leq \frac{k}{n},$ for $t \in \bigl(\frac{k}{n}, \frac{k+1}{n} \bigr],$ define 
\begin{equation}\label{eq:Euler-approx}
 U_t^{(n)}=U_{\frac{k}{n}}^{(n)}+\int_{\frac{k}{n}}^{t}\alpha(U^{(n)}_{\frac{k}{n}}) {\rm d}s + \int_{\frac{k}{n}}^{t}\beta(U^{(n)}_{\frac{k}{n}}) {\rm d}W_s \gamma(U^{(n)}_{\frac{k}{n}}).
\end{equation}

Setting $\kappa(n,t):=[tn]/n$, we write that for any $t\ge 0$
\begin{equation}\label{eq:Euler-approx-1}
\begin{split}
 U_t^{(n)}&=U_0+\int_{0}^{t}\alpha(U^{(n)}_{\kappa(n,s)}) {\rm d}s + \int_{0}^{t}\beta(U^{(n)}_{\kappa(n,s)}) {\rm d}W_s \gamma(U^{(n)}_{\kappa(n,s)})\\
 &=U_0+\int_{0}^{t}\alpha(U^{(n)}_s+p^{(n)}_s) {\rm d}s + \int_{0}^{t}\beta(U^{(n)}_s+p^{(n)}_s) {\rm d}W_s \gamma(U^{(n)}_s+p^{(n)}_s),
 \end{split}
\end{equation}
where 
\begin{equation}
\begin{split}
p^{(n)}_t&:=U^{(n)}_{\kappa(n,t)}-U^{(n)}_t\\
&=-\int_{\kappa(n,t)}^{t}\alpha(U^{(n)}_{\kappa(n,s)}) {\rm d}s - \int_{\kappa(n,t)}^{t}\beta(U^{(n)}_{\kappa(n,s)}) {\rm d}W_s \gamma(U^{(n)}_{\kappa(n,s)}).
\end{split}
\end{equation}

\begin{remark}\label{UinA}
Note that $ U_s^{(n)}\in (\mathscr A_{\le t})_{sa},$  for all $0\le s\le t$ and $t\ge 0.$
Indeed, this follows by induction over the mesh intervals.
For $0<s\le \frac1n\wedge t$, we have
\begin{equation*}
U_s^{(n)}
=
U_0+\int_0^s \alpha(U_0)\,{\rm d} r+\int_0^s \beta(U_0)\,{\rm d}W_r\,\gamma(U_0).
\end{equation*}
Since $U_0\in \mathscr A_{\le t}$ and $\alpha,\beta,\gamma$ do not break causality, we have $\alpha(U_0),\beta(U_0),\gamma(U_0)\in \mathscr A_{\le t}.$
Moreover, $W_r\in \mathscr A_{\le t}$ for all $r\le t.$ Consequently,
\begin{equation*}
U_s^{(n)}\in \mathscr A_{\le t}.
\end{equation*}

Now assume that $U_r^{(n)}\in \mathscr A_{\le t}, r\le \frac{k}{n}\le t.$
For $s\in \Bigl(\frac{k}{n},\frac{k+1}{n}\Bigr]\cap[0,t],$
the Euler approximation gives
\begin{equation*}
\begin{split}
U_s^{(n)}
&=
U_{\frac{k}{n}}^{(n)}
+\int_{\frac{k}{n}}^s \alpha\Bigl(U_{\frac{k}{n}}^{(n)}\Bigr)\,{\rm d}s
+\int_{\frac{k}{n}}^s
\beta\Bigl(U_{\frac{k}{n}}^{(n)}\Bigr)\,{\rm d}W_r\,\gamma\Bigl(U_{\frac{k}{n}}^{(n)}\Bigr)\\
&=
U_{\frac{k}{n}}^{(n)}
+\Bigl(s-\frac{k}{n}\Bigr)\alpha\Bigl(U_{\frac{k}{n}}^{(n)}\Bigr)
+\beta\Bigl(U_{\frac{k}{n}}^{(n)}\Bigr)\bigl(W_s-W_{\frac{k}{n}}\bigr)\gamma\Bigl(U_{\frac{k}{n}}^{(n)}\Bigr).
\end{split}
\end{equation*}
Since $W_s-W_{\frac{k}{n}}\in \mathscr A_{\le t}, s\le t,$ the induction hypothesis implies that $U_s^{(n)} \in \mathscr A_{\le t}$.
So by the induction, we have 
\begin{equation*}
U_s^{(n)}\in \mathscr A_{\le t},\qquad 0\le s\le t.
\end{equation*}
Moreover, the self-adjointness of $U_s^{(n)}$ follows from Remark~\ref{rmk:sa}.
\end{remark}
The proof of Theorem \ref{thm:solution}, which we present in the remainder of this section, is divided into the following subsections.


\subsection{Cauchy property of the Euler approximation} In this subsection, we will prove the following proposition.

\begin{proposition}\label{prop:Cauchy-prop}
Assume that Assumptions  \ref{assumption:ass1} and \ref{assumption:ass2} hold. Given $T\in[0,\infty)$, and let $U_t^{(n)}$ be defined in \eqref{eq:Euler-approx-1}. Then we have 
\begin{equation}\label{eq:cauchy}
\sup_{t\in[0,T]}\|U^{(n)}_t-U^{(m)}_t\|_2\to 0
\end{equation}
as $n,m\to \infty.$
\end{proposition}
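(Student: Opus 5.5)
The plan is to follow the classical Euler-scheme argument: first localize, then apply the free Itô formula to the difference, and finally use Assumption \ref{assumption:ass2} to remove the localization.

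\emph{Localization.} For $M>0$ I will introduce the deterministic "stopping time"
\begin{equation*}
T^{(n)}_M:=\inf\{t\ge0:\|U^{(n)}_t\|_2> M\}.
\end{equation*}
Since norms are deterministic quantities here, this is simply a time. Set $\tau_M:=T^{(n)}_M\wedge T^{(m)}_M\wedge T$.

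\emph{Smallness of the error term.} On $[0,\tau_M]$, the bounds \eqref{eq:bound-loc} together with the Itô isometry \eqref{eq:isometry} give
\begin{equation*}
\|p^{(n)}_t\|_2\le \tfrac1n K_\alpha(M)+n^{-1/2}K_\beta(M)K_\gamma(M).
\end{equation*}
Hence $\sup_{t\le\tau_M}\|p^{(n)}_t\|_2\to0$. I also note that $\|U^{(n)}_{\kappa(n,t)}\|_2\le M$ automatically, since $\kappa(n,t)\le t$.

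\emph{Equation for the difference.} Set $D_t:=U^{(n)}_t-U^{(m)}_t$. It satisfies \eqref{eq:Euler-approx-1} with drift $\alpha(U^{(n)}_{\kappa(n,t)})-\alpha(U^{(m)}_{\kappa(m,t)})$ and a diffusion part that is a difference of two $\beta\,{\rm d}W\,\gamma$ terms. I will apply \eqref{eq:free-ito-polynomial}, in its version with a sum of two stochastic integrals, and take the trace. The martingale terms $\tau(\int D_s\beta\,{\rm d}W_s\gamma)$ vanish, by freeness of increments and adaptedness from Remark \ref{UinA}. This leaves
\begin{equation*}
\|D_t\|_2^2=\int_0^t 2\tau\bigl(D_s(\alpha^n_s-\alpha^m_s)\bigr){\rm d}s+\int_0^t\bigl\|\beta^n_s\otimes\gamma^n_s-\beta^m_s\otimes\gamma^m_s\bigr\|_{2}^2{\rm d}s,
\end{equation*}
where the last quadratic-variation term is bounded via
\begin{equation*}
\|\beta^n\|_2\|\gamma^n-\gamma^m\|_2+\|\gamma^m\|_2\|\beta^n-\beta^m\|_2 .
\end{equation*}
Using Cauchy--Schwarz and the local Lipschitz constants $L(M)$ (applied at radius $M$), I will write $U^{(n)}_{\kappa}=U^{(n)}_s+p^{(n)}_s$. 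This yields, for $t\le\tau_M$,
\begin{equation*}
\|D_t\|_2^2\le C(M)\int_0^t\bigl(\|D_s\|_2^2+\|p^{(n)}_s\|_2^2+\|p^{(m)}_s\|_2^2\bigr){\rm d}s .
\end{equation*}
Gronwall's lemma then gives $\sup_{t\le\tau_M}\|D_t\|_2^2\le C(M,T)\sup_{s\le\tau_M}(\|p^{(n)}_s\|_2^2+\|p^{(m)}_s\|_2^2)\to0$.

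\emph{Removing the localization (main obstacle).} It remains to show that for $M$ large, $T^{(n)}_M>T$ for all $n$. I need a bound on $\sup_n\sup_{t\le T}\|U^{(n)}_t\|_2$ that is uniform in $n$. I will apply the traced Itô formula to $\|U^{(n)}_t\|_2^2$ to get
\begin{equation*}
\frac{d}{dt}\|U^{(n)}_t\|_2^2=2\tau\bigl(U^{(n)}_t\alpha(U^{(n)}_\kappa)\bigr)+\|\beta(U^{(n)}_\kappa)\|_2^2\|\gamma(U^{(n)}_\kappa)\|_2^2 .
\end{equation*}
I then write $U^{(n)}_t=U^{(n)}_\kappa-p^{(n)}_t$, so Assumption \ref{assumption:ass2} applies at $U^{(n)}_\kappa$, leaving the error $-2\tau(p^{(n)}_t\alpha(U^{(n)}_\kappa))$. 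Within a single step, conditioning on $\mathscr A_{\le\kappa}$ kills the stochastic part of $p^{(n)}_t$ in the trace, because $\alpha(U^{(n)}_\kappa)$ is $\mathscr A_{\le\kappa}$-measurable. Only $(t-\kappa)\|\alpha(U^{(n)}_\kappa)\|_2^2\le n^{-1}K_\alpha(M)^2$ remains. Combining these gives
\begin{equation*}
\|U^{(n)}_t\|_2^2\le\|U_0\|_2^2+\int_0^t\bigl(L_1\|U^{(n)}_{\kappa(n,s)}\|_2^2+L_2\bigr){\rm d}s+\tfrac{T}{n}K_\alpha(M)^2 .
\end{equation*}
A discrete Gronwall argument then gives $\|U^{(n)}_t\|_2^2\le(\|U_0\|_2^2+L_2T+1)e^{|L_1|T}=:R^2$ once $n\ge n_0(M)$. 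Now I choose $M>R$ and argue by continuity in $t$ that $T^{(n)}_M>T$ for all $n\ge n_0(M)$. Then $\tau_M=T$, and the previous step yields \eqref{eq:cauchy}. The delicate point is the circularity here: $n_0$ depends on $M$ only through $K_\alpha(M)$, and $M$ is fixed first, so the argument closes.
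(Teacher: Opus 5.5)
Your proposal is correct and takes essentially the same route as the paper. The shared steps are the Euler scheme, localization by the deterministic time $T^{(n)}_M$, the traced free It\^o formula with vanishing martingale terms plus local Lipschitz bounds (giving the Cauchy property up to $T^{(n)}_M\wedge T^{(m)}_M$), and Assumption \ref{assumption:ass2} applied at $U^{(n)}_{\kappa(n,s)}$ to show that the localization is inactive on $[0,T]$ for large $n$. The differences are only technical refinements: you use Gronwall instead of the weight $e^{-C_1t}$; you compute $\tau\bigl(p^{(n)}_t\,\alpha(U^{(n)}_{\kappa(n,t)})\bigr)$ exactly via freeness, getting an $O(1/n)$ error instead of the paper's Cauchy--Schwarz bound $2K_\alpha(M)\|p^{(n)}_t\|_2$; and you use an $M$-independent radius $R$ built with $|L_1|$ instead of the limiting inequality \eqref{eq:limit-ineq}, which also avoids the sign problem in the paper's choice $C_2=2L_1$ when $L_1<0$.
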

The above proposition will be proved by using a stopping time argument. Fix $M\in[0,\infty)$ such that $\|U_0\|_2 \leq M/3,$ and define the stopping time
\begin{equation}\label{eq:stopping-time}
T_M^{(n)}=\inf\{t\ge0 : \|U_t^{(n)}\|_2 > M/3\}.
\end{equation}
Note that the map $t \mapsto \|U_t^{(n)}\|_2$ is continuous, we then have $T_M^{(n)} > 0$. Consequently,
\begin{equation}\label{eq:upper-bounds-U-p}
\|U_t^{(n)}\|_2\leq M/3 \;\; \text{and} \;\; \|p_t^{(n)}\|_2\leq 2M/3 
\end{equation}
for all $ t\in[0,T_M^{(n)}].$ Moreover, applying the free It\^o isometry \eqref{eq:isometry} we obtain
\begin{equation*}\label{eq:p-estimate}
\begin{split}
\|p^{(n)}_t\|_2 
&= \Bigl\|\int_{\kappa(n,t)}^{t}\alpha\bigl(U^{(n)}_{\kappa(n,s)}\bigr) {\rm d}s 
+ \int_{\kappa(n,t)}^{t}\beta\bigl(U^{(n)}_{\kappa(n,s)}\bigr) {\rm d}W_s \,\gamma\bigl(U^{(n)}_{\kappa(n,s)}\bigr)\Bigr\|_2\\
&\le \int_{\kappa(n,t)}^{t}\bigl\|\alpha\bigl(U^{(n)}_{\kappa(n,s)}\bigr)\bigr\|_2 {\rm d}s 
+ \Bigl(\int_{\kappa(n,t)}^{t}\|\beta\bigl(U^{(n)}_{\kappa(n,s)}\bigr)\|_2^2 
\, \|\gamma\bigl(U^{(n)}_{\kappa(n,s)}\bigr)\|_2^2 {\rm d}s \Bigr)^{\frac12}.
\end{split}
\end{equation*}
Note that $\kappa(n,t)\to t$ as $n\to\infty.$ It follows from \eqref{eq:bound-loc} that
\begin{equation}\label{eq:p-bounds}
\sup_{t\in[0,T_M^{(n)}]}\|p^{(n)}_t\|_2 \to 0
\end{equation}
as $n \to \infty.$

\begin{lemma}\cite{bianeStochasticCalculusRespect1998}\label{lem:tau0}
Let $X_t,Y_t$ be $\mathscr A_{\le t}$-adapted processes. If for any $T>0,$
\begin{equation*}
\int_0^T \|X_t\|_2^2 \cdot \|Y_t\|_2^2  \; {\rm d}t< \infty,
\end{equation*}
 then 
\begin{equation}\label{eq:martingale-prop}
\tau\!\left( \int_0^T X_t\, {\rm d}W_t \, Y_t \right) = 0.
\end{equation}
\end{lemma}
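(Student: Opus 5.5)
The plan is to prove the identity first for simple (step) adapted integrands and then pass to the limit in $L^2$ using the free It\^o isometry \eqref{eq:isometry}.

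\emph{Step 1: simple integrands.} Take a partition $0=t_0<\cdots<t_n=T$ and adapted processes that are piecewise constant, $X_t=X_{t_{k-1}}$ and $Y_t=Y_{t_{k-1}}$ on $(t_{k-1},t_k]$, with $X_{t_{k-1}},Y_{t_{k-1}}\in\mathscr A_{\le t_{k-1}}$. Then
\begin{equation*}
\int_0^T X_t\,{\rm d}W_t\,Y_t=\sum_{k=1}^n X_{t_{k-1}}\bigl(W_{t_k}-W_{t_{k-1}}\bigr)Y_{t_{k-1}}.
\end{equation*}
By traciality, $\tau\bigl(X_{t_{k-1}}(W_{t_k}-W_{t_{k-1}})Y_{t_{k-1}}\bigr)=\tau\bigl((W_{t_k}-W_{t_{k-1}})\,Y_{t_{k-1}}X_{t_{k-1}}\bigr)$. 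The increment $W_{t_k}-W_{t_{k-1}}$ is free from $\mathscr A_{\le t_{k-1}}$; this uses that $U_0$ is free from $\{W_t\}_{t\ge0}$, together with the free Brownian motion axioms. Freeness of two elements $a,b$ gives $\tau(ab)=\tau(a)\tau(b)$. Since the increment is centered semicircular, each term vanishes.

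If $X_{t_{k-1}},Y_{t_{k-1}}$ lie only in $L^2$ rather than in $\mathscr A$, the product $Y_{t_{k-1}}X_{t_{k-1}}$ lies in $L^1$. I would handle this by approximating $X_{t_{k-1}},Y_{t_{k-1}}$ in $L^2$ by bounded elements of $\mathscr A_{\le t_{k-1}}$. The map $(X,Y)\mapsto\tau(X(W_{t_k}-W_{t_{k-1}})Y)$ is continuous on $L^2\times L^2$ because the increment is bounded in operator norm, so the factorization persists in the limit.

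\emph{Step 2: approximation.} For general adapted $X_t,Y_t$ satisfying the integrability hypothesis, choose simple adapted $X^{(m)},Y^{(m)}$ such that
\begin{equation*}
\int_0^T X^{(m)}_t\,{\rm d}W_t\,Y^{(m)}_t\;\to\;\int_0^T X_t\,{\rm d}W_t\,Y_t \quad\text{in } L^2(\mathscr A,\tau).
\end{equation*}
This is exactly how the $L^2$-integral is constructed in the Remark following \eqref{eq:isometry}, via the isometry. Since $\tau$ is continuous on $L^2$ (indeed $|\tau(Z)|\le\|Z\|_2$ for a state), the traces pass to the limit, and Step 1 gives $0$.

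\emph{Main obstacle.} The hardest point is the approximation in Step 2. The isometry \eqref{eq:isometry} is stated in a product form, $\|\beta\|_2^2\,\|\gamma\|_2^2$, which is not a norm of a linear object in $(X,Y)$ jointly. So the construction of the integral for $L^2$ integrands must be taken as given from \cite{bianeStochasticCalculusRespect1998, schluechtermannNumericalSolutionFree2022a}, where it is built as a limit of Riemann sums $\sum X_{t_{k-1}}\Delta W_k Y_{t_{k-1}}$ in $L^2$.

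With that construction granted, the proof is direct: each Riemann sum already has trace zero by Step 1, so the limit does too.
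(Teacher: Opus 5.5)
The paper gives no proof of this lemma (it is simply quoted from Biane--Speicher), and your argument is exactly the standard proof behind that citation, so it is correct: trace zero for simple adapted integrands by traciality and freeness of the increment $W_{t_k}-W_{t_{k-1}}$ from $\mathscr A_{\le t_{k-1}}$, then passage to the $L^2$-limit via $|\tau(Z)|\le\|Z\|_2$. The obstacle you flag in Step 2 is harmless, because \eqref{eq:isometry} is really the linear isometry on biprocesses $X_t\otimes Y_t\in L^2(\mathscr A\bar\otimes\mathscr A^{\rm op})$, whose norm on elementary tensors is $\|X_t\|_2\|Y_t\|_2$; so you may approximate by simple adapted biprocesses $\sum_i A_i\otimes B_i$, for which Step 1 still gives trace zero by linearity.
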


\begin{lemma}\label{lem:marti}
For $T\in[0,\infty)$, and $n\in\mathbb{N},$ let $\mathcal{P}_n: 0=t_0 < t_1<\cdots < t_{n-1} < t_n =T$ be 
an uniform partition of $[0,T]$ with mesh $1/n$ and $X^{(n)}_t$ be a piecewise-constant $\mathscr{A}_{\le t}$-adapted process such that
\begin{equation*}
X_t^{(n)} = X_{t_{j-1}}^{(n)}, \quad \forall t \in \bigl[t_{j-1},\, t_j\bigr), \; j = 1, \dots, n.
\end{equation*}

Similarly, for $m\in\mathbb{N},$ let $\mathcal{P}_m: 0=s_0 < s_1<\cdots < s_{m-1} < s_m =T$ be 
another uniform partition of $[0,T]$ with mesh $1/m.$ Let  $Y^{(m)}_t$ be a piecewise-constant $\mathscr{A}_{\le t}$-adapted process such that
\begin{equation*}
Y_t^{(m)} = Y_{s_{j-1}}^{(m)}, \quad \forall t \in \bigl[s_{j-1},\, s_j\bigr), \; j = 1, \dots, m.
\end{equation*}

If $Z_t$ is an $\mathscr A_{\le t}$-adapted process such that
\begin{equation*}
\sup_{t\in[0,T]}\|Z_t\|_2<\infty,
\end{equation*}
then for any $n,m\in\mathbb{N},$ we have 
\begin{equation}\label{eq:martingale-prop}
\tau\!\left( \int_0^T Z_s X_s^{(n)}\, {\rm d}W_s \, Y_s^{(m)} \right) =\tau\!\left( \int_0^T X_s^{(n)}\, {\rm d}W_s \, Y_s^{(m)} Z_s\right) = 0.
\end{equation}
\end{lemma}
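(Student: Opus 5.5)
The plan is to reduce the statement to Lemma~\ref{lem:tau0} by merging the common refinement of the two partitions and then using traciality to move $Z_s$ inside.

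First, by traciality, $\tau(Z_s X\,dW\,Y) $-type expressions can be cycled, so it suffices to treat the first expression; the second is analogous, or follows from the first by moving the factor $Z_s$ to the other side. Let $0=r_0<r_1<\dots<r_N=T$ be the common refinement of $\mathcal P_n$ and $\mathcal P_m$. On each $[r_{k-1},r_k)$ both $X^{(n)}_s=X_{r_{k-1}}$ and $Y^{(m)}_s=Y_{r_{k-1}}$ are constant and $\mathscr A_{\le r_{k-1}}$-measurable. The main subtlety is that $Z_s$ is \emph{not} piecewise constant, so the integrand $Z_sX^{(n)}_s$ is a genuinely time-dependent adapted process. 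Here I interpret the expression $\int_0^T Z_sX_s^{(n)}\,dW_s\,Y_s^{(m)}$ as the free stochastic integral with left integrand $Z_sX^{(n)}_s$ and right integrand $Y^{(m)}_s$.

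With this interpretation, the first identity is exactly Lemma~\ref{lem:tau0} applied to the adapted processes $Z_sX^{(n)}_s$ and $Y^{(m)}_s$. I need only the integrability $\int_0^T\|Z_sX^{(n)}_s\|_2^2\|Y^{(m)}_s\|_2^2\,ds<\infty$. Since $X^{(n)}$ takes finitely many values, I will assume these values lie in $\mathscr A$, as they do in the intended application, where they are coefficients evaluated at Euler iterates. Then $\|Z_sX^{(n)}_s\|_2\le\|Z_s\|_2\max_j\|X_{t_j}\|$, which together with $\sup_s\|Z_s\|_2<\infty$ gives the bound.

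To keep the argument self-contained instead of relying on the operator-norm assumption, I will argue directly on each subinterval. The sum over refinement intervals reduces the problem to showing
\[
\tau\Bigl(\int_{r_{k-1}}^{r_k} Z_s X\,dW_s\,Y\Bigr)=0 .
\]
Approximate this integral by Riemann sums $\sum_j Z_{u_{j-1}}X(W_{u_j}-W_{u_{j-1}})Y$. Each term has trace $\tau(YZ_{u_{j-1}}X\,(W_{u_j}-W_{u_{j-1}}))$. This trace vanishes because the increment is free from $\mathscr A_{\le u_{j-1}}$ and centered, so $\tau(AD)=\tau(A)\tau(D)=0$. The Riemann sums converge in $L^2$ by the isometry~\eqref{eq:isometry}, and $\tau$ is $L^2$-continuous, since $|\tau(A)|\le\|A\|_2$. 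Passing to the limit therefore gives zero.

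The second expression is handled the same way: $\tau(X(W_{u_j}-W_{u_{j-1}})YZ_{u_{j-1}})=\tau(YZ_{u_{j-1}}X\,\Delta W)=0$. The identity in the lemma then follows.

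I expect the main obstacle to be justifying the use of traciality and freeness when the factors lie only in $L^2$ rather than in $\mathscr A$. I would handle this by truncating $Z$ to bounded operators, for instance spectrally, or by $\mathscr A$-approximation in $L^2$, and passing to the limit using the isometry.
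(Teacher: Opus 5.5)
Your reduction to Lemma~\ref{lem:tau0} is correct and is essentially the paper's proof. You bound the integrand by $\|Z_sX^{(n)}_s\|_2\le \|Z_s\|_2\max_j\|X^{(n)}_{t_j}\|$, and combine this with $\sup_s\|Z_s\|_2<\infty$ and the finitely many values of $Y^{(m)}$. The paper instead splits $[0,T]$ along the common refinement and applies Lemma~\ref{lem:tau0} on each piece. It uses exactly the same operator-norm bound on the constant values: its integrability estimate involves $\|A_{r-1}\|\,\|B_{r-1}\|$. So your boundedness assumption on the values of $X^{(n)}$ is not an extra hypothesis relative to the paper.

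The ``self-contained'' detour, however, should be dropped or repaired, for two reasons.

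First, it does not actually avoid the operator-norm assumption. You still need $X,Y$ bounded, both to have $YZ_{u_{j-1}}X\in L^1$ (a product of three $L^2$ elements need not be traceable) and to get $\|(Z_s-Z_u)X\|_2\le\|Z_s-Z_u\|_2\,\|X\|$.

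Second, the isometry does not make the left-endpoint Riemann sums $\sum_j Z_{u_{j-1}}X\,(W_{u_j}-W_{u_{j-1}})\,Y$ converge in general. It only gives convergence when $\int\|Z_s-Z_{\pi(s)}\|_2^2\,{\rm d}s\to0$, where $\pi(s)$ is the left endpoint of the partition interval containing $s$. That requires some time regularity of $Z$, such as $L^2$-continuity. This holds in the paper's applications, where $Z$ is $U^{(n)}_s$ or $\Delta_{n,m}[U_s]$, but it is not among the lemma's hypotheses. For a general adapted $Z$ with only $\sup_t\|Z_t\|_2<\infty$, you would have to use arbitrary simple adapted approximants, which amounts to reproving Lemma~\ref{lem:tau0}. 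It is cleaner to stop after the reduction.
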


\begin{proof}
We shall only prove that the first integral in \eqref{eq:martingale-prop} has zero mean, since the second integral can be handled in the same way.
Without loss of generality, we may assume that $n \leq m$ and $t_j \neq s_j, j =1, \ldots, n.$ Consider the common refinement $\mathcal{P}:0 = p_1 < p_2 \cdots < p_{k-1} < p_k =T$ of $\mathcal{P}_n$ and $\mathcal{P}_m$, where $k =n+m-2.$  
Therefore, for each $r=1,\dots,k$, the subinterval $[p_{r-1}, p_r)$ is contained in an unique subinterval of $\mathcal{P}_n$, say $\bigl[t_{i_r-1},\, t_{i_r}\bigr)$, and in an unique subinterval of $\mathcal{P}_m$, say $\bigl[s_{j_r-1},\, s_{j_r} \bigr)$, where $i_r \in \{1, 2,\ldots, n\}$ and $j_r \in \{1, 2,\ldots, m\}.$
Set
\begin{equation*}
A_{r-1}= X_{t_{i_r-1}}^{(n)}, \qquad 
B_{r-1}= Y_{s_{j_r-1}}^{(m)}.
\end{equation*}
Then for any $s \in [p_{r-1},p_r)$, we have 
\begin{equation*}
X^{(n)}_s \equiv A_{r-1}, \quad Y^{(m)}_s \equiv B_{r-1},
\end{equation*}
and both $A_{r-1}$ and $B_{r-1}$ belong to the subalgebra $\mathscr{A}_{\le p_{r-1}}$.

It follows that 
\begin{equation*}
\int_0^T Z_s X_s^{(n)}\, {\rm d}W_s \, Y_s^{(m)} 
= \sum_{r=1}^k  \int_{p_{r-1}}^{p_r} Z_s A_{r-1}\, {\rm d}W_s \, B_{r-1}.
\end{equation*}

Since for all $r=1,\ldots,k$
\begin{equation*}
\begin{split}
\int_{p_{r-1}}^{p_r} \|Z_s A_{r-1}\|_2^2 \|B_{r-1}\|_2^2 {\rm d} s&\leq \int_{p_{r-1}}^{p_r} \|Z_s\|_2^2 \|A_{r-1}\|^2 \|B_{r-1}\|_2^2 {\rm d}s\\
&\leq \sup_{t\in[0,T]}\|Z_t\|_2^2\|A_{r-1}\|^2 \|B_{r-1}\|^2(p_r-p_{r-1})\\
&<\infty,
\end{split}
\end{equation*}
 it follows from Lemma~\ref{lem:marti} that
 \begin{equation*}
\tau\!\left( \int_{p_{r-1}}^{p_r} Z_s A_{r-1}\, {\rm d}W_s \, B_{r-1}\right) = 0.
\end{equation*}
Summing over $r$ gives \eqref{eq:martingale-prop}. 
\end{proof}

Now we are ready to prove Proposition \ref{prop:Cauchy-prop}. Firstly, we introduce the following notations. 
\begin{notation}
For any $n,m\in\mathbb{N},$ we denote
\begin{itemize}
\item $\Delta_{n,m} [U_s]:=U^{(n)}_s -U^{(m)}_s;$  
\item $\Delta_{n,m} [p_s]:=p^{(n)}_s-p^{(m)}_s;$
\item $\Delta_{n,m}[\alpha_s]:=\alpha(U^{(n)}_s+p^{(n)}_s)-\alpha(U^{(m)}_s+p^{(m)}_s);$
\item $\Delta_{n,m}[\beta_s]:=\beta(U^{(n)}_s+p^{(n)}_s)-\beta(U^{(m)}_s+p^{(m)}_s);$
\item $\Delta_{n,m}[\gamma_s]:=\gamma(U^{(n)}_s+p^{(n)}_s)-\gamma(U^{(m)}_s+p^{(m)}_s).$
\end{itemize}
\end{notation}
Assume that Assumption \ref{assumption:ass1} holds. Note that $\|U^{(n)}_s+p^{(n)}_s\|_2,\|U^{(m)}_s+p^{(m)}_s\|_2\le M$ (see \eqref{eq:upper-bounds-U-p}). Thus, we have \begin{equation}\label{eq:loclip-alpha}
\|\Delta_{n,m}[\alpha_s]\|_2 \leq L_\alpha (M) \|\Delta_{n,m} [U_s] + \Delta_{n,m} [p_s]\|_2.
\end{equation}
Moreover, it follows from \eqref{eq:bound-loc} that  
\begin{equation}\label{eq:bound-Dalpha}
\|\Delta_{n,m}[\alpha_s]\|_2 \leq  \| \alpha(U^{(n)}_s+p^{(n)}_s)\|_2 + \|\alpha(U^{(m)}_s+p^{(m)}_s)\|_2 \leq 2 K_\alpha(M).
\end{equation}
We remark that similar bounds also hold for $\|\Delta_{n,m}[\beta_s]\|_2$ and $\|\Delta_{n,m}[\gamma_s]\|_2.$

\begin{lemma}\label{lem:Cauchy-prop-stopping-time}
Assume that Assumption \ref{assumption:ass1} holds. Let $T_M^{(n)}$ be the stopping time given in \eqref{eq:stopping-time}. Then we have 
\begin{equation}\label{eq:cauchy-stopping-time}
\sup_{t\in[0,T_M^{(n)}\wedge T_M^{(m)}]} \|\Delta_{n,m} [U_t]\|_2\to 0
\end{equation}
as $n,m\to \infty.$
\end{lemma}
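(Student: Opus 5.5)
Our plan is to apply the free It\^o formula \eqref{eq:free-ito-polynomial} to the self-adjoint difference $\Delta_{n,m}[U_t]$ and take the trace. This yields an integral inequality for $\|\Delta_{n,m}[U_t]\|_2^2$, which we close with Gronwall's lemma; the small remainders $p^{(n)},p^{(m)}$ are then sent to zero via \eqref{eq:p-bounds}.

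\textbf{Step 1: It\^o formula for the difference.} By \eqref{eq:Euler-approx-1},
\begin{equation*}
\Delta_{n,m}[U_t]=\int_0^t \Delta_{n,m}[\alpha_s]\,{\rm d}s+\int_0^t\Bigl(\beta^{(n)}_s{\rm d}W_s\gamma^{(n)}_s-\beta^{(m)}_s{\rm d}W_s\gamma^{(m)}_s\Bigr),
\end{equation*}
where $\beta^{(n)}_s=\beta(U^{(n)}_{\kappa(n,s)})$, and similarly for the other coefficients. All of these coefficients are piecewise constant on the uniform meshes of size $1/n$ and $1/m$. Applying \eqref{eq:free-ito-polynomial} to $\Delta_{n,m}[U_t]^2$ and taking $\tau$, the stochastic integrals have the form $\int Z_s X^{(n)}_s{\rm d}W_sY^{(n)}_s$ (and mixed $n,m$ versions), with $Z=\Delta_{n,m}[U]$ bounded in $L^2$ up to the stopping time. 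Lemma~\ref{lem:marti} therefore makes them vanish in trace. To make this rigorous on $[0,t\wedge T^{(n)}_M\wedge T^{(m)}_M]$, we insert indicators of the stopping time into the integrands, which keeps them adapted.

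\textbf{Step 2: Bounding the remaining terms.} The drift contributes $2\int_0^t\tau(\Delta_{n,m}[U_s]\Delta_{n,m}[\alpha_s])\,{\rm d}s$. By Cauchy--Schwarz and \eqref{eq:loclip-alpha}, this is at most
\begin{equation*}
2L_\alpha(M)\int_0^t\|\Delta_{n,m}[U_s]\|_2\bigl(\|\Delta_{n,m}[U_s]\|_2+\|\Delta_{n,m}[p_s]\|_2\bigr){\rm d}s .
\end{equation*}
The quadratic-variation term is $\int_0^t\|\beta^{(n)}_s{\rm d}W_s\gamma^{(n)}_s-\beta^{(m)}_s{\rm d}W_s\gamma^{(m)}_s\|^2$-type, and by the It\^o isometry \eqref{eq:isometry} it is controlled as follows. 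We write
\begin{equation*}
\beta^{(n)}\otimes\gamma^{(n)}-\beta^{(m)}\otimes\gamma^{(m)}=\Delta[\beta]\otimes\gamma^{(n)}+\beta^{(m)}\otimes\Delta[\gamma].
\end{equation*}
Using the $L^2$ local Lipschitz bounds and the bounds \eqref{eq:bound-loc}, this is at most
\begin{equation*}
C(M)\int_0^t\bigl(\|\Delta_{n,m}[U_s]\|_2^2+\|\Delta_{n,m}[p_s]\|_2^2\bigr){\rm d}s,
\end{equation*}
with $C(M)=2(L_\beta K_\gamma+L_\gamma K_\beta)^2$. Collecting terms gives, for $t\le T$ inside the stopping interval,
\begin{equation*}
\|\Delta_{n,m}[U_t]\|_2^2\le C'(M)\int_0^t\|\Delta_{n,m}[U_s]\|_2^2{\rm d}s+C'(M)\,T\sup_{s\le T^{(n)}_M\wedge T^{(m)}_M}\|\Delta_{n,m}[p_s]\|_2^2 .
\end{equation*}

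\textbf{Step 3: Gronwall and conclusion.} Gronwall's lemma yields
\begin{equation*}
\sup_{t}\|\Delta_{n,m}[U_t]\|_2^2\le C'(M)\,T e^{C'(M)T}\sup\|\Delta_{n,m}[p]\|_2^2 ,
\end{equation*}
and the right-hand side tends to zero by \eqref{eq:p-bounds}.

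\textbf{Main obstacle.} The delicate point is Step 1. Lemma~\ref{lem:tau0}/\ref{lem:marti} requires integrands with operator-norm-bounded piecewise constant factors. Under Assumption~\ref{assumption:ass1}, however, the coefficients are only $L^2$-controlled, and operator-norm boundedness of $\beta(U^{(n)}_{k/n})$ needs the maps to send $\mathscr A_{sa}$ to $\mathscr A_{sa}$. This holds since each Euler step stays in $\mathscr A$ (Remark~\ref{UinA}). We would first check this carefully, together with the stopping-time truncation preserving adaptedness.

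Two further caveats apply. First, the argument needs a horizon: the estimate is uniform on $[0,T\wedge T^{(n)}_M\wedge T^{(m)}_M]$ for fixed $T$, so we interpret the supremum over bounded times. Second, the use of \eqref{eq:p-bounds} relies on it being uniform in $n$ up to the stopping time, which follows from \eqref{eq:bound-loc}, since it gives $\|p^{(n)}_t\|_2\le K_\alpha/n+K_\beta K_\gamma/\sqrt n$.
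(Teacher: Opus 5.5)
Your proposal is correct and follows essentially the same route as the paper: the free It\^o formula for $(\Delta_{n,m}[U_t])^2$, taking the trace so the stochastic integrals vanish by Lemma~\ref{lem:marti}, bounding the drift and quadratic-variation terms with the local $L^2$-Lipschitz constants and \eqref{eq:bound-loc}, and concluding from $\sup\|\Delta_{n,m}[p_s]\|_2\to0$. The differences are cosmetic. You close the estimate with Gronwall where the paper uses the weight $e^{-C_1t}$, and you make explicit the finite horizon $T$ that the paper's proof leaves implicit; also, $T^{(n)}_M$ is a deterministic time because $\|U^{(n)}_t\|_2$ is a number, so your indicator truncation is harmless but not needed.
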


\begin{proof}

For any $n,m\in\mathbb{N},$ it follows from \eqref{eq:Euler-approx-1} that
\begin{equation*}\label{eq:DU}
\begin{split}
\Delta_{n,m} [U_t]
&= \int_{0}^{t}\Delta_{n,m}[\alpha_s] {\rm d}s \\
& \quad \quad+ \int_{0}^{t}\Delta_{n,m}[\beta_s] {\rm d}W_s \,\gamma(U^{(n)}_{\kappa(n,s)})+ \int_{0}^{t} \beta(U^{(m)}_{\kappa(m,s)}) \,{\rm d}W_s \,\Delta_{n,m}[\gamma_s].
\end{split}
\end{equation*}

Let $C_1\ge0$ be a constant that will be determined later. For $t \in [0, T_M^{(n)} \wedge T_M^{(m)}]$, by the free It\^o formula~\eqref{eq:free-ito-polynomial}, we have
\begin{equation}\label{eq:DU-bound-ito}
\begin{split}
e^{-C_1t}&(\Delta_{n,m} [U_t])^2 \\
& = -C_1\int_{0}^{t}e^{-C_1s} (\Delta_{n,m} [U_s])^2 {\rm d}s \\
&\quad+ \int_{0}^{t}e^{-C_1s}\left( \Delta_{n,m} [U_s]\,\Delta_{n,m}[\alpha_s]+\Delta_{n,m}[\alpha_s] \,\Delta_{n,m} [U_s] \right) {\rm d}s \\
&\quad + \int_{0}^{t}e^{-C_1s}\left( \tau\!\left(\gamma(U^{(n)}_{\kappa(n,s)}) \,\Delta_{n,m}[\beta_s]\right)\,
\Delta_{n,m}[\beta_s] \, \gamma(U^{(n)}_{\kappa(n,s)})\right) {\rm d}s \\
&\quad + \int_{0}^{t}e^{-C_1s}\left( \tau\!\left(\gamma(U^{(n)}_{\kappa(n,s)}) \,\beta(U^{(m)}_{\kappa(m,s)})\right)\,
\Delta_{n,m}[\beta_s]\,\Delta_{n,m}[\gamma_s] \right) {\rm d}s \\
&\quad + \int_0^t e^{-C_1 s} \left( \tau\! \left(\Delta_{n,m}[\gamma_s] \,\Delta_{n,m}[\beta_s]\right)\,
\gamma(U^{(n)}_{\kappa(n,s)})\,\beta(U^{(m)}_{\kappa(m,s)}) \right) {\rm d}s\\
&\quad + \int_0^t e^{-C_1 s} \left( \tau\! \left(\Delta_{n,m}[\gamma_s] \,\beta(U^{(m)}_{\kappa(m,s)})\right)\,
\beta(U^{(m)}_{\kappa(m,s)}) \,\Delta_{n,m}[\gamma_s] \right) {\rm d}s\\
&\quad + M_t^{(n,m)},\\
\end{split}
\end{equation}
where the term $M_t^{(n,m)}$ is given by
\begin{equation*}
\begin{split}
M_t^{(n,m)} &= \int_{0}^{t}e^{-C_1s}\Delta_{n,m} [U_s]
\left( \Delta_{n,m}[\beta_s] \,{\rm d}W_s\,\gamma(U^{(n)}_{\kappa(n,s)}) +\beta(U^{(m)}_{\kappa(m,s)})\,{\rm d}W_s\,\Delta_{n,m}[\gamma_s]\right) \\
&\quad +\int_{0}^{t}e^{-C_1s}\left( \Delta_{n,m}[\beta_s] \,{\rm d}W_s\,\gamma(U^{(n)}_{\kappa(n,s)})+\beta(U^{(m)}_{\kappa(m,s)})\,{\rm d}W_s\,\Delta_{n,m}[\gamma_s]\right) \Delta_{n,m} [U_s].
\end{split}
\end{equation*}
Applying Lemma \ref{lem:marti} then gives that for $t \in [0, T_M^{(n)} \wedge T_M^{(m)}]$
\begin{equation*}
\tau(M_t^{(n,m)}) = 0.
\end{equation*}
Consequently, taking the trace on both sides of \eqref{eq:DU-bound-ito} yields, for all $t$ in this interval,
\begin{equation}\label{eq:DU-bound-estimate}
\begin{split}
e^{-C_1t}&\|\Delta_{n,m} [U_t]\|_2^2 \\
& = -C_1 \int_0^t e^{-C_1s} \|\Delta_{n,m} [U_s]\|_2^2 {\rm d}s +2 \int_{0}^{t}e^{-C_1s}
\langle\Delta_{n,m} [U_s],\Delta_{n,m}[\alpha_s] \rangle {\rm d}s \\
&\quad + \int_{0}^{t}e^{-C_1s}\left( \tau\!\big(\gamma(U^{(n)}_{\kappa(n,s)}) \,\Delta_{n,m}[\beta_s]\big)\right)^2 {\rm d}s\\
&\quad+2 \int_0^t e^{-C_1 s} \tau\!\left(\Delta_{n,m}[\beta_s] \,\Delta_{n,m}[\gamma_s]\right)\cdot
\tau\!\left(\beta(U^{(m)}_{\kappa(m,s)})\,\gamma(U^{(n)}_{\kappa(n,s)})\right) {\rm d}s\\
&\quad + \int_0^t e^{-C_1 s} \left( \tau\! \big( \beta(U^{(m)}_{\kappa(m,s)})\, \Delta_{n,m}[\gamma_s]\big) \right)^2 {\rm d}s.
\end{split}
\end{equation}

Firstly, we decompose 
\begin{equation*}
\langle\Delta_{n,m} [U_s],\Delta_{n,m}[\alpha_s]\rangle
= \bigl\langle \Delta_{n,m} [U_s]+\Delta_{n,m} [p_s],\,\Delta_{n,m}[\alpha_s]\bigr\rangle
- \langle\Delta_{n,m} [p_s],\Delta_{n,m}[\alpha_s]\rangle .
\end{equation*}
Then, the non-commutative Cauchy–Schwarz inequality \cite{pisier_non-commutative_1997} 
\begin{equation}
|\langle X, Y \rangle| \leq \|X\|_2 \cdot \|Y\|_2
\end{equation}
and \eqref{eq:loclip-alpha} imply
\begin{equation*}
\begin{split}
\bigl\langle \Delta_{n,m} [U_s] + \Delta_{n,m} [p_s],\,\Delta_{n,m}[\alpha_s]\bigr\rangle
& \leq \|\Delta_{n,m}[\alpha_s]\|_2 \cdot \|\Delta_{n,m} [U_s] + \Delta_{n,m} [p_s]\|_2 \\
& \le L_\alpha(M)\|\Delta_{n,m} [U_s] + \Delta_{n,m} [p_s]\|_2^2 .
\end{split}
\end{equation*}
Moreover, \eqref{eq:bound-Dalpha} yield
\begin{equation*}
\begin{split}
-\langle\Delta_{n,m} [p_s],\Delta_{n,m}[\alpha_s]\rangle
&\le \|\Delta_{n,m} [p_s]\|_2 \cdot \|\Delta_{n,m}[\alpha_s]\|_2\\
&\le 2K_\alpha(M)\|\Delta_{n,m} [p_s]\|_2.
\end{split}
\end{equation*}
Therefore,
\begin{equation}\label{eq:alpha-est}
\langle\Delta_{n,m} [U_s],\Delta_{n,m}[\alpha_s]\rangle
\le L_\alpha(M)\|\Delta_{n,m} [U_s] + \Delta_{n,m} [p_s] \|_2^2
+ 2K_\alpha(M)\|\Delta_{n,m} [p_s]\|_2.
\end{equation}

Secondly, combining \eqref{eq:beta-gamma-Lip-loc} and \eqref{eq:bound-loc}, we obtain 
\begin{equation}\label{eq:beta-gamma-est}
\begin{split}
&\tau\!\left(\Delta_{n,m}[\beta_s]\,\Delta_{n,m}[\gamma_s] \right) \cdot
\tau\!\left(\beta(U^{(m)}_{\kappa(m,s)})\,\gamma(U^{(n)}_{\kappa(n,s)})\right) \\
&\quad\quad \quad\le \|\Delta_{n,m}[\beta_s] \|_2\|\Delta_{n,m} [\gamma_s] \|_2
\;\|\beta(U^{(m)}_{\kappa(m,s)})\|_2\|\gamma(U^{(n)}_{\kappa(n,s)})\|_2 \\
&\quad \quad \quad \le L_\beta(M)L_\gamma(M)K_\beta(M)K_\gamma(M)\,
\|\Delta_{n,m} [U_s] + \Delta_{n,m} [p_s]\|_2^2.
\end{split}
\end{equation}
Similarly, 
$$\left( \tau\!\big(\gamma(U^{(n)}_{\kappa(n,s)}) \,\Delta_{n,m}[\beta_s]\big)\right)^2 \le L^2_\beta(M)K^2_\gamma(M)\,
\|\Delta_{n,m} [U_s] + \Delta_{n,m} [p_s]\|_2^2$$
 and 
$$\left( \tau\!\big( \beta(U^{(m)}_{\kappa(m,s)})\, \Delta_{n,m}[\gamma_s]\big) \right)^2 \le K^2_\beta(M)L^2_\gamma(M)\,
\|\Delta_{n,m} [U_s] + \Delta_{n,m} [p_s]\|_2^2.$$

Then, inserting \eqref{eq:alpha-est} and \eqref{eq:beta-gamma-est} into \eqref{eq:DU-bound-estimate} gives
\begin{equation*}\label{eq:DU-bound-norm}
\begin{split}
e^{-C_1t}&\|\Delta_{n,m} [U_t]\|_2^2 \\
& \leq -C_1 \int_0^t e^{-C_1s} \|\Delta_{n,m} [U_s]\|_2^2 {\rm d}s\\
& \quad +2 \int_{0}^{t}e^{-C_1s}
\left( L_\alpha(M)\|\Delta_{n,m} [U_s] + \Delta_{n,m} [p_s] \|_2^2
+ 2K_\alpha(M)\|\Delta_{n,m} [p_s]\|_2 \right) {\rm d}s \\
&\quad+(L_\beta(M)K_\gamma(M)+L_\gamma(M)K_\beta(M))^2 \int_0^t e^{-C_1 s} 
\|\Delta_{n,m} [U_s] + \Delta_{n,m} [p_s]\|_2^2 {\rm d}s.
\end{split}
\end{equation*}
Now choose
\begin{equation*}
C_1 := 4L_\alpha(M) + 2(L_\beta(M)K_\gamma(M)+L_\gamma(M)K_\beta(M))^2,
\end{equation*}
and using the elementary inequality 
\begin{equation*}
\|\Delta_{n,m} [U_s] + \Delta_{n,m} [p_s]\|_2^2
\le 2\|\Delta_{n,m} [U_s]\|_2^2 + 2\|\Delta_{n,m} [p_s]\|_2^2,
\end{equation*}
we obtain
\begin{equation*}
e^{-C_1t}\|\Delta_{n,m} [U_t]\|_2^2
\le \int_{0}^{t}e^{-C_1s}\left( 4K_\alpha(M)\|\Delta_{n,m} [p_s]\|_2
+ C_1\|\Delta_{n,m} [p_s]\|_2^2\right) {\rm d}s.
\end{equation*}

Finally, note that $\sup_{t\in[0,T_M^{(n)}]}\|p^{(n)}_t\|_2\to 0$ as $n \to\infty$ (see \eqref{eq:p-bounds}). We have 
\begin{equation*}
\sup_{t\in[0,T_M^{(n)}\wedge T_M^{(m)}]}\|\Delta_{n,m} [p_t]\|_2 \to 0
\end{equation*}
as $n, m\to\infty,$ which completes the proof. 
\end{proof}

\begin{proof}[Proof of Proposition~\ref{prop:Cauchy-prop}]
By Lemma \ref{lem:Cauchy-prop-stopping-time}, it therefore suffices to prove that 
\begin{equation*}
\lim_{M\to\infty}\liminf_{n\to\infty} T^{(n)}_M= \infty.
\end{equation*}
Let $C_2$ be a constant that will be determined later, the free It\^o formula \eqref{eq:free-ito-polynomial} implies
\begin{equation}\label{eq:U-bound-ito}
		\begin{split}
		e^{-C_2t} (U_t^{(n)})^2 &=U^2_0- C_2 \int_0^t e^{-C_2 s}(U_s^{(n)})^2 {\rm d}s \\
		&\quad + \int_0^t e^{-C_2s} \left( U_s^{(n)}  \alpha(U^{(n)}_{\kappa(n,s)})  {\rm d}s + \alpha(U^{(n)}_{\kappa(n,s)})  U_s^{(n)}\right) {\rm d}s \\
		&\quad+\int_0^t e^{-C_2 s} \tau \left(\gamma(U^{(n)}_{\kappa(n,s)}) \beta(U^{(n)}_{\kappa(n,s)}) \right) \beta(U^{(n)}_{\kappa(n,s)})\gamma(U^{(n)}_{\kappa(n,s)}) {\rm d}s \\
		&\quad +M_t^{(n)},\\
		\end{split}
		\end{equation}
where 
\begin{equation*}
\begin{split}
M_t^{(n)}&:=\int_0^t e^{-C_2s} \left( U_s^{(n)}  \beta(U^{(n)}_{\kappa(n,s)})  {\rm d}W_s \gamma(U^{(n)}_{\kappa(n,s)}) + \beta(U^{(n)}_{\kappa(n,s)}) {\rm d}W_s \gamma(U^{(n)}_{\kappa(n,s)}) U_s^{(n)}\right).
\end{split}
\end{equation*}

For $t\in[0,T_M^{(n)}],$ Lemma \ref{lem:marti} implies $\tau(M_t^{(n)})=0.$ Taking the trace on both sides of \eqref{eq:U-bound-ito}, we obtain
\begin{equation*}
\begin{split}
		e^{-C_2t} \| U_t^{(n)}\|_2^2 &=\| U_0\|_2^2 - C_2 \int_0^t e^{-C_2 s} \| U_s^{(n)}\|_2^2 {\rm d}s \\
		&\quad +2 \int_0^t e^{-C_2s} \left\langle U_s^{(n)} ,  \alpha(U^{(n)}_{\kappa(n,s)}) \right\rangle  {\rm d}s \\
		&\quad+\int_0^t e^{-C_2 s} \left[  \tau \left(\gamma(U^{(n)}_{\kappa(n,s)}) \beta(U^{(n)}_{\kappa(n,s)}) \right) \right]^2 {\rm d}s.
		\end{split}
\end{equation*}

By Assumption \ref{assumption:ass2} and a similar argument of \eqref{eq:alpha-est}, we have for any $t \in [0, T^{(n)}_M]$,
\begin{equation*}\label{eq:U-bound-norm}
\begin{split}
e^{-C_2 t} \|U_t^{(n)}\|_2^2 &\le \|U_0\|_2^2- C_2 \int_0^t e^{-C_2 s} \| U_s^{(n)}\|_2^2 {\rm d}s \\
& \quad + \int_0^t e^{-C_2 s} \left( 2K_{\alpha}(M) \|p_s^{(n)}\|_2 + L_1 \|U^{(n)}_{\kappa(n,s)}\|_2^2 + L_2  \right) {\rm d}s.
\end{split}
\end{equation*}
Using the fact that $U^{(n)}_{\kappa(n,s)}=U_s^{(n)}+p_s^{(n)}$ and the elementary inequality 
\begin{equation*}
\|U_s^{(n)}+p_s^{(n)}\|_2^2 \le 2\|U_s^{(n)}\|_2^2 + 2\|p_s^{(n)}\|_2^2
\end{equation*}
and choosing $C_2 := 2L_1$, we obtain
\begin{equation}\label{eq:U-bound-3}
\begin{split}
e^{-2L_1t} \|U_t^{(n)}\|_2^2 &\leq \|U_0\|_2^2 + \int_0^t e^{-2L_1s} \left(2K_\alpha(M)\|p_s^{(n)}\|_2 + 2L_1\|p_s^{(n)}\|_2^2 + L_2 \right) {\rm d}s.
\end{split}
\end{equation}
Note that $\|U^{(n)}_{T_M^{(n)}}\|_2 \ge M/3$ and 
\begin{equation*}
\sup_{t\in[0,T_M^{(n)}]}\|p_t^{(n)}\|_2 \to 0
\end{equation*} 
as $n \to \infty$. By taking $t = T_M^{(n)}$ and letting $n\to\infty$ in \eqref{eq:U-bound-3}, we derive that
\begin{equation}\label{eq:limit-ineq}
\frac{M^2}{9} \leq e^{2L_1 T_M} \|U_0\|_2^2 + \frac{L_2}{2L_1}(e^{2L_1 T_M} - 1) \quad (L_1 \neq 0),
\end{equation}
where $T_M := \liminf_{n\to\infty} T_M^{(n)}$. For $L_1 = 0$, the inequality becomes
\begin{equation*}
\frac{M^2}{9} \leq \|U_0\|_2^2 + L_2 T_M.
\end{equation*}

Hence, the above inequalities induces the following behavior of the explosion time:
\begin{itemize}
\item If $L_1 > 0$, then as $M\to\infty$, the right-hand side of \eqref{eq:limit-ineq} grows exponentially in $T_M$, forcing $T_M \to \infty$.
\item If $L_1 = 0$, then $T_M \geq \frac{M^2/9 - \|U_0\|_2^2}{L_2} \to \infty$ as $M\to\infty$ (provided $L_2 > 0$).
\item If $L_1 < 0$, the factor $e^{2L_1T_M}$ decays, and the right-hand side of \eqref{eq:limit-ineq} is bounded above by $\|U_0\|_2^2 + \frac{L_2}{2|L_1|}$.
Hence, for sufficiently large $M$, inequality \eqref{eq:limit-ineq}  cannot hold for any finite $T_M$. This implies that $T_M$ must be infinite, meaning the solution does not reach norm $M/3$ in finite time; equivalently, $T_M \to \infty$ as $M \to \infty$ (in the sense that the explosion time is infinite for all large $M$).
\end{itemize}
Thus, in all three cases, $\lim_{M\to\infty}T_M = \infty$, which completes our proof.
\end{proof}


\subsection{Existence and uniqueness of the solution} 

By Proposition \ref{prop:Cauchy-prop} and the completeness of $L_2(\mathscr{A},\tau)$, there exists a process $U_t \in L_2(\mathscr{A},\tau)$, such that for any given $T>0$
\begin{equation}\label{eq:converge}
\sup_{t\in[0,T]}\|U^{(n)}_t-U_t\|_2\to 0 \quad \text{as } n \to \infty.
\end{equation}
We will check $U_t$ fulfills the free SDE \eqref{eq:fSDE}. 

\begin{proposition}\label{prop:solution}
Assume that Assumptions \ref{assumption:ass1} and \ref{assumption:ass2} hold. Then for any given $T>0$ the process $U_t$ given by \eqref{eq:converge} is a unique solution to the free SDE \eqref{eq:fSDE}, i.e., 
\begin{equation}\label{eq:solution-fSDE}
U_t = U_0 + \int_0^t \alpha(U_s) {\rm d}s + \int_0^t \beta(U_s) {\rm d}W_s \gamma (U_s)
\end{equation}
for $t \in [0, T].$ Moreover,
\begin{equation*}
\sup_{t \in [0, T]} \|U_t\|_2 < \infty. 
\end{equation*}
\end{proposition}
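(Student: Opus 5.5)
Fix $T>0$. The plan is to pass to the limit $n\to\infty$ in the Euler representation \eqref{eq:Euler-approx-1}, term by term in $L^2(\mathscr A,\tau)$, and then to prove uniqueness by an It\^o-formula/Gronwall argument of the same type as in Lemma~\ref{lem:Cauchy-prop-stopping-time}.

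\textbf{Step 1: uniform bounds.} From the proof of Proposition~\ref{prop:Cauchy-prop}, $\lim_{M\to\infty}\liminf_n T_M^{(n)}=\infty$. Hence we can choose $M$ with $T_M^{(n)}>T$ for all large $n$. Then $\|U^{(n)}_t\|_2\le M/3$ and $\|U^{(n)}_{\kappa(n,t)}\|_2\le M$ for $t\in[0,T]$. By \eqref{eq:converge}, $\sup_{t\le T}\|U_t\|_2\le M/3$, which gives the claimed bound. Adaptedness $U_t\in L^2(\mathscr A_{\le t},\tau)$ and self-adjointness follow from Remark~\ref{UinA}, since both properties are closed under $L^2$-limits.

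\textbf{Step 2: convergence of each term.} Write $U^{(n)}_{\kappa(n,s)}-U_s=(U^{(n)}_s-U_s)+p^{(n)}_s$. By \eqref{eq:converge} and \eqref{eq:p-bounds},
\[
\sup_{s\le T}\|U^{(n)}_{\kappa(n,s)}-U_s\|_2\to0.
\]
For the drift, Assumption~\ref{assumption:ass1} gives $\|\alpha(U^{(n)}_{\kappa(n,s)})-\alpha(U_s)\|_2\le L_\alpha(M)\|U^{(n)}_{\kappa(n,s)}-U_s\|_2$. Integrating, the drift integrals converge in $L^2$.

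For the stochastic integral, I would split
\[
\beta_n\,{\rm d}W\,\gamma_n-\beta\,{\rm d}W\,\gamma=(\beta_n-\beta)\,{\rm d}W\,\gamma_n+\beta\,{\rm d}W\,(\gamma_n-\gamma),
\]
where $\beta_n:=\beta(U^{(n)}_{\kappa(n,s)})$, $\beta:=\beta(U_s)$, and similarly for $\gamma$. The It\^o isometry \eqref{eq:isometry} then bounds the $L^2$-norm of the difference by
\[
\Bigl(\int_0^T L_\beta^2K_\gamma^2\|U^{(n)}_{\kappa(n,s)}-U_s\|_2^2\,{\rm d}s\Bigr)^{1/2}+(\text{symmetric term}),
\]
which tends to $0$. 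Taking $L^2$-limits in \eqref{eq:Euler-approx-1} yields \eqref{eq:solution-fSDE}.

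\textbf{Main obstacle.} The delicate point is that the integrands $\beta(U_s)$ and $\gamma(U_s)$ lie only in $L^2$, not in $\mathscr A$. The integral therefore has to be understood through the $L^2$-extension mentioned in the remark after \eqref{eq:isometry}: define it as the $L^2$-limit of integrals of bounded adapted approximants, where well-definedness is guaranteed by the isometry. I would also check that $s\mapsto U_s$ is $L^2$-continuous, being a uniform limit of continuous maps, so that the integrands are measurable.

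\textbf{Step 3: uniqueness.} Let $V$ be another solution with the same $U_0$ and $W$. Introduce the stopping time $\sigma_M=\inf\{t:\|U_t\|_2\vee\|V_t\|_2>M\}$. Apply the free It\^o formula \eqref{eq:free-ito-polynomial} to $e^{-Ct}(U_t-V_t)^2$ and take the trace. The martingale terms vanish by Lemma~\ref{lem:tau0}. Estimating exactly as in \eqref{eq:alpha-est}--\eqref{eq:beta-gamma-est}, but now without any $p$-terms, gives
\[
e^{-Ct}\|U_t-V_t\|_2^2\le0\quad\text{on }[0,\sigma_M\wedge T]
\]
for $C=2L_\alpha(M)+(L_\beta K_\gamma+L_\gamma K_\beta)^2$. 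Hence $U=V$ there. Letting $M\to\infty$ requires $\sigma_M\to\infty$, which holds because both processes are $L^2$-bounded on $[0,T]$ (for $V$, this is either part of the solution class or follows from the Lyapunov estimate of Assumption~\ref{assumption:ass2}, as in the proof of Proposition~\ref{prop:Cauchy-prop}).
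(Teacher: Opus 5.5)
Your existence argument (Steps 1--2) is essentially the paper's. You pass to the limit term by term in \eqref{eq:Euler-approx-1}, using the local Lipschitz bound for the drift and the isometry \eqref{eq:isometry} for the split stochastic integral. Your localization is a bit cleaner than the paper's $\widetilde T_M$: choosing $M$ with $T^{(n)}_M>T$ for all large $n$ puts every Euler iterate and the limit in one fixed $L^2$-ball on $[0,T]$. It also gives $\sup_{t\le T}\|U_t\|_2\le M/3$ directly, instead of going through \eqref{eq:limit-ineq}.

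Step 3 takes a different route from the paper and, as written, has a gap. You apply the free It\^o formula to $e^{-Ct}(U_t-V_t)^2$ and remove the martingale terms with Lemma~\ref{lem:tau0}. Here, however, $U$ and $V$ are only $L^2$-valued. The integrands of those martingale terms, e.g.\ $(U_s-V_s)\bigl(\beta(U_s)-\beta(V_s)\bigr)$, are products of two $L^2$ elements and lie only in $L^1$. So the hypothesis $\int_0^t\|X_s\|_2^2\|Y_s\|_2^2\,{\rm d}s<\infty$ of Lemma~\ref{lem:tau0} cannot be checked. Moreover, \eqref{eq:free-ito-polynomial} is only stated for integrands in $\mathscr A_{\le t}$. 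The same argument was harmless in Lemma~\ref{lem:Cauchy-prop-stopping-time}, because the Euler iterates are bounded operators, and in Proposition~\ref{prop:Stability-prop-1}, because the solutions are assumed norm-bounded. Your alternative of obtaining the $L^2$-bound on $V$ from the Lyapunov estimate runs into the same problem.

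The paper avoids It\^o's formula altogether. It subtracts the two integral equations and takes $L^2$-norms. Minkowski, the local Lipschitz bounds and \eqref{eq:isometry} then give $\|U_t-V_t\|_2^2\le C\int_0^t\|U_s-V_s\|_2^2\,{\rm d}s$, and Gronwall finishes the proof. The paper's display writes $\sqrt a+\sqrt b$ as $\sqrt{a+b}$, but this only costs a constant. This uses nothing beyond what you already invoked in Step 2, so the repair is immediate. For the bound on $V$, you can simply note that the integral equation together with \eqref{eq:isometry} makes $t\mapsto V_t$ $L^2$-continuous, hence bounded on $[0,T]$. Your It\^o route could in principle be salvaged, since the relevant traces can be read as inner products $\langle U_s-V_s,\,\cdot\,\rangle$ in $L^2$, but that needs an additional approximation argument you have not given.
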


\begin{proof}
For $M\in[0,\infty)$ define the stopping time 
\begin{equation*}
\widetilde{T}_M:=\inf\{t\in[0,T]: \sup_{n\in\mathbb{N}}\|U^{(n)}_{\kappa(n,t)}\|_2\vee \|U_t\|_2>M\}\wedge T.
\end{equation*} 

Hence, the local Lipschitz property of $\alpha$ implies
\begin{equation*}\label{eq:alpha-conv}
\begin{split}
\biggl\|\int_{0}^{t \wedge \widetilde{T}_M} \alpha\bigl(U^{(n)}_{\kappa(n,s)}\bigr) \, {\rm d}s 
- \int_{0}^{t \wedge \widetilde{T}_M} \alpha(U_s) \, {\rm d}s \biggr\|_2 
&\leq \int_{0}^{t \wedge \widetilde{T}_M} 
\bigl\| \alpha\bigl(U^{(n)}_{\kappa(n,s)}\bigr) - \alpha(U_s) \bigr\|_2 \, {\rm d}s \\
&\longrightarrow 0 \quad \text{as } n \to \infty.
\end{split}
\end{equation*}
Moreover, the free It\^o isometry \eqref{eq:isometry} yields 
\begin{equation*}\label{eq:beta-gamma-conv}
\begin{split}
\biggl\| \int_{0}^{t \wedge \widetilde{T}_M}& \beta\bigl(U^{(n)}_{\kappa(n,s)}\bigr) \, {\rm d}W_s \, \gamma\bigl(U^{(n)}_{\kappa(n,s)}\bigr)
- \int_{0}^{t \wedge \widetilde{T}_M} \beta(U_s) \, {\rm d}W_s \, \gamma(U_s) \biggr\|_2 \\
&= \biggl\| \int_{0}^{t \wedge \widetilde{T}_M} \bigl(\beta\bigl(U^{(n)}_{\kappa(n,s)}\bigr) - \beta(U_s)\bigr) \, {\rm d}W_s \, \gamma\bigl(U^{(n)}_{\kappa(n,s)}\bigr) \\
&\qquad + \int_{0}^{t \wedge \widetilde{T}_M} \beta(U_s) \, {\rm d}W_s \, \bigl(\gamma\bigl(U^{(n)}_{\kappa(n,s)}\bigr) - \gamma(U_s)\bigr) \biggr\|_2 \\
&\le \biggl( \int_{0}^{t \wedge \widetilde{T}_M} \bigl\| \beta\bigl(U^{(n)}_{\kappa(n,s)}\bigr) - \beta(U_s) \bigr\|_2^2 \, 
\bigl\| \gamma\bigl(U^{(n)}_{\kappa(n,s)}\bigr) \bigr\|_2^2 \, {\rm d}s \biggr)^{\frac12} \\
&\qquad + \biggl( \int_{0}^{t \wedge \widetilde{T}_M} \bigl\| \gamma\bigl(U^{(n)}_{\kappa(n,s)}\bigr) - \gamma(U_s) \bigr\|_2^2 \,
\bigl\| \beta(U_s) \bigr\|_2^2 \, {\rm d}s \biggr)^{\frac12} \\
&\longrightarrow 0 \quad \text{as } n \to \infty.
\end{split}
\end{equation*}

Since $\lim_{M\to\infty}\liminf_{n\to\infty} T^{(n)}_M= \infty$, there exists $\widetilde{M}\in[0,\infty)$ such that $\widetilde{T}_{\widetilde{M}}=T.$ So, $U_t$ is a solution to the free SDE \eqref{eq:solution-fSDE} for all $t \in [0, T].$

Now for the uniformly boundedness of $U_t,$ it follows from \eqref{eq:converge} and \eqref{eq:limit-ineq} that for any given $T>0$
\begin{equation*}\label{eq:converge}
\sup_{t\in[0,T]}\|U_t\|_2^2\leq \sup_{t\in[0,T]}\|U^{(n)}_t\|_2^2 +\sup_{t\in[0,T]} \|U^{(n)}_t-U_t\|_2^2\leq e^{2L_1 T} \|U_0\|_2^2 + \frac{L_2}{2L_1}(e^{2L_1 T} - 1)
\end{equation*}
for $L_1\neq 0$ and $n \to \infty$. Similarly, for $L_1=0,$ we have $\sup_{t\in[0,T]}\|U_t\|_2^2\leq\|U_0\|_2^2+L_2T.$

Next, suppose that $U_t$ and $\widetilde{U}_t$ are two solutions to \eqref{eq:fSDE} with the same initial value, and that both are uniformly bounded by $M$.
By \eqref{eq:solution-fSDE}, we have for any $t\in[0,T]$
\begin{equation*}
\begin{split}
    U_t - \widetilde{U}_t &= \int_0^t \bigl(\alpha(U_s) - \alpha(\widetilde{U}_s)\bigr) \, {\rm d}s 
    + \int_0^t \bigl(\beta(U_s) - \beta(\widetilde{U}_s)\bigr) \, {\rm d}W_s \, \gamma(U_s) \\
    &\quad + \int_0^t \beta(\widetilde{U}_s) \, {\rm d}W_s \, \bigl(\gamma(U_s) - \gamma(\widetilde{U}_s)\bigr).
\end{split}
\end{equation*}
Using Assumption~\ref{assumption:ass1}, \eqref{eq:bound-loc}, and the free It\^o isometry \eqref{eq:isometry}, we obtain
\begin{equation*}
\begin{split}
    \|U_t - \widetilde{U}_t\|_2 
    &\leq \int_0^t L_{\alpha}(M) \, \|U_s - \widetilde{U}_s\|_2 \, {\rm d}s \\
    &\quad + \biggl( \int_0^t \bigl(K^2_{\gamma}(M) L^2_{\beta}(M) + K^2_{\beta}(M) L^2_{\gamma}(M)\bigr) 
    \|U_s - \widetilde{U}_s\|_2^2 \, {\rm d}s \biggr)^{\frac{1}{2}} \\
    &\leq C_3 \biggl( \int_0^t \|U_s - \widetilde{U}_s\|_2^2 \, {\rm d}s \biggr)^{\frac{1}{2}},
\end{split}
\end{equation*}
where $C_3 = L_{\alpha}(M)\sqrt{T} + \bigl(K^2_{\gamma}(M) L^2_{\beta}(M) + K^2_{\beta}(M) L^2_{\gamma}(M)\bigr)^{\frac{1}{2}}$, 
and we have applied the Cauchy--Schwarz inequality to obtain the second estimate. 
By Gronwall's inequality \cite{oksendal2003}, it follows that $\|U_t - \widetilde{U}_t\|_2^2 = 0$ for all $t \in [0, T]$. 
Hence $U_t = \widetilde{U}_t$, which establishes the uniqueness of the solution.

It remains to prove that $t\to U_t$ is continuous in $L^2(\mathscr{A},\tau)$. Indeed,
since for any $t\geq s\geq 0,$
\begin{equation*}
U_t-U_s =  \int_s^t \alpha(U_r) {\rm d}r + \int_s^t \beta(U_r) {\rm d}W_r \gamma (U_r),
\end{equation*}
we obtain that there exists a constant $K>0$ such that
\begin{equation*}
\begin{split}
\|U_t-U_s\|_2 &\leq \|\int_s^t \alpha(U_r) {\rm d}r\|_2 + \|\int_s^t \beta(U_r) {\rm d}W_r \gamma (U_r)\|_2\\
&\leq  \int_s^t \| \alpha(U_r)\|_2 {\rm d}r + \left(\int_s^t \|\beta(U_r)\|_2^2 \| \gamma (U_r)\|_2^2 {\rm d}r \right)^{\frac{1}{2}}\\
&\leq K(t-s)^{\frac{1}{2}},
\end{split}
\end{equation*}
where we have used the free It\^o isometry \eqref{eq:isometry}, uniformly boundedness of $U_t$ and \eqref{eq:bound-loc}.
\end{proof}

\subsection{Weak uniqueness of the equation}\label{subsec:weak-unique}
We recall the notion of the Cauchy transform. 
\begin{definition}[\cite{mingoFreeProbabilityRandom2017}]
Let $\mu$ be a probability measure on $\mathbb{R}.$ Its Cauchy transform is given by 
\begin{equation}\label{eq:Cauchy-transform}
G_\mu (z) : = \int_\mathbb{R} \frac{1}{z-t} \, \mu ({\rm d}t)
\end{equation}
for any $z \in \mathbb{C} \backslash \mathbb{R}.$ 
\end{definition}
It was known that the Cauchy transform $G_{\mu}$ is analytic on $\mathbb{C}^+$ and uniquely determines the distribution $\mu$ via the following Stieltjes inversion formula:
\begin{equation}\label{eq:stiel-inver}
\mu ((a,b)) = -\frac{1}{\pi} \lim_{\varepsilon \to 0^+} \int_a^b \Im\, G_\mu (x + i\varepsilon) \, {\rm d}x
\end{equation}
for continuous points $a < b$ of $\mu$. Moreover, 

\begin{proposition}[{\cite{mingoFreeProbabilityRandom2017}}]\label{prop:Cauchy-transform}
If $\{\mu_n\}_{n\in\mathbb N}$ is a sequence of probability measures on $\mathbb R$ such that
\begin{equation*}
G_{\mu_n}(z)\to G_\mu(z), \qquad z\in\mathbb C^{+},
\end{equation*}
for some probability measure $\mu$ on $\mathbb R$, then $\mu_n$ converges weakly to $\mu$.
\end{proposition}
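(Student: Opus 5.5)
The plan is to prove tightness of $\{\mu_n\}$ first and then identify every subsequential limit through the uniqueness of the Cauchy transform, i.e.\ the Stieltjes inversion formula \eqref{eq:stiel-inver}. Weak convergence of the whole sequence then follows from the standard subsequence argument.

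\emph{Step 1: tightness.} This is the step I expect to be the main obstacle, since pointwise convergence of $G_{\mu_n}$ does not a priori prevent mass from escaping to infinity. Evaluate on the imaginary axis $z=iy$ with $y>0$. Since $\frac{1}{iy-t}=\frac{-t-iy}{t^2+y^2}$, we have
\begin{equation*}
1+y\,\Im G_{\nu}(iy)=\int_{\mathbb R}\frac{t^2}{t^2+y^2}\,\nu({\rm d}t)\qquad\text{for every probability measure }\nu .
\end{equation*}
On $\{|t|\ge y\}$ the integrand is at least $1/2$, so
\begin{equation*}
\nu(\{|t|\ge y\})\le 2\bigl(1+y\,\Im G_\nu(iy)\bigr).
\end{equation*}
For the limit $\mu$, dominated convergence gives $\int \frac{t^2}{t^2+y^2}\,\mu({\rm d}t)\to 0$ as $y\to\infty$. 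Given $\varepsilon>0$, first fix $y$ so that this quantity is below $\varepsilon/4$. Then use the convergence $G_{\mu_n}(iy)\to G_\mu(iy)$ at this single point to find $N$ with $1+y\,\Im G_{\mu_n}(iy)<\varepsilon/2$ for all $n\ge N$. This yields $\mu_n(\{|t|\ge y\})<\varepsilon$ for $n\ge N$. The finitely many $\mu_n$ with $n<N$ are each tight individually, so after enlarging $y$ the family $\{\mu_n\}$ is tight.

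\emph{Step 2: identification of limits.} By Prokhorov's theorem, every subsequence $\{\mu_{n_k}\}$ has a further subsequence converging weakly to some probability measure $\nu$. Tightness ensures that $\nu$ has total mass one. For fixed $z\in\mathbb C^+$, the function $t\mapsto (z-t)^{-1}$ is bounded and continuous on $\mathbb R$. Hence along that further subsequence $G_{\mu_{n_k}}(z)\to G_\nu(z)$, and so $G_\nu=G_\mu$ on $\mathbb C^+$. By the Stieltjes inversion formula \eqref{eq:stiel-inver}, $\nu$ and $\mu$ agree on all intervals $(a,b)$ whose endpoints are continuity points of both measures. Such intervals generate the Borel $\sigma$-algebra, so $\nu=\mu$.

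\emph{Step 3: conclusion.} Every subsequence of $\{\mu_n\}$ has a further subsequence converging weakly to the same limit $\mu$. Suppose $\int f\,{\rm d}\mu_n\not\to\int f\,{\rm d}\mu$ for some bounded continuous $f$. Then some subsequence would stay at a positive distance from $\int f\,{\rm d}\mu$, which contradicts Step 2. Hence $\mu_n\to\mu$ weakly.
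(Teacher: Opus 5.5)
Your proof is correct and complete. The paper gives no argument for this statement; it is quoted from Mingo--Speicher, so there is no in-paper proof to match.

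Compared with the usual textbook route, you handle the ``no mass escapes to infinity'' issue up front. That route extracts vaguely convergent subsequences by Helly's selection theorem, whose limits are a priori only sub-probability measures. It identifies them through the Cauchy transform, using that $t\mapsto (z-t)^{-1}$ vanishes at infinity together with Stieltjes inversion for finite measures. It then upgrades vague to weak convergence because the limit $\mu$ has total mass one. Your identity $1+y\,\Im G_\nu(iy)=\int_{\mathbb R}\frac{t^2}{t^2+y^2}\,\nu({\rm d}t)$ instead gives the explicit tail bound $\mu_n(\{|t|\ge y\})\le 2\bigl(1+y\,\Im G_{\mu_n}(iy)\bigr)$. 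So tightness is an input rather than a consequence, Prokhorov replaces Helly, and no vague-to-weak upgrade is needed. It also shows exactly where the hypothesis that $\mu$ is a probability measure enters: for a limiting Cauchy transform of a measure of mass $m<1$ one would get $1+y\,\Im G(iy)\to 1-m>0$, and tightness fails. The Helly route, in turn, gets compactness for free and never needs this computation.

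One small point to state explicitly in Step~2: agreement on a generating class implies equality of measures only when that class is a $\pi$-system. The intervals $(a,b)$ with endpoints among the common continuity points of $\mu$ and $\nu$ do form one. Such endpoints are dense, since each measure has only countably many atoms, and the intersection of two such intervals is again such an interval or empty. So your conclusion $\nu=\mu$ stands.
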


For $U \in \widetilde{\mathscr{A}}_{\mathrm{\mathrm{sa}}}$, the \emph{resolvent} map $R_U(z) = (z - U)^{-1}$ is an analytic map from $\mathbb{C}^+$ to $\mathscr{A}$. Furthermore,
we have $\|R_U(z)\| \leq 1/ |{\Im (z)}|$ for any $z \in \mathbb{C}^+.$
Let $\mu_U$ be the distribution of $U,$ by the functional calculus, we have 
\begin{equation*}
G_{\mu_U} (z) = \tau (R_U(z))
\end{equation*}
for any $z \in \mathbb{C}^+.$

\begin{proposition}\label{prop:same-law}
Assume that Assumptions \ref{assumption:ass1} and \ref{assumption:ass2} hold. Then the free SDE \eqref{eq:fSDE} admits \textit{weak uniqueness}.
\end{proposition}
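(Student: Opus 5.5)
The plan is to show that the distribution of $U_t$ is determined by the distribution of $U_0$ alone, by going through the Euler approximation \eqref{eq:Euler-approx}. Take two weak solutions $\{U_t,W_t\}$ in $(\mathscr{A},\tau)$ and $\{\widehat U_t,\widehat W_t\}$ in $(\widehat{\mathscr{A}},\widehat\tau)$ with $\mu_{U_0}=\mu_{\widehat U_0}$. By Theorem~\ref{thm:solution} and \eqref{eq:converge}, each is the $L^2$-limit, uniformly on $[0,T]$, of its own Euler scheme $U^{(n)}_t$, respectively $\widehat U^{(n)}_t$, built from the same $n$ and coefficients. The first step is then to show that, for fixed $n$ and $t$, $U^{(n)}_t$ and $\widehat U^{(n)}_t$ have the same distribution. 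The second step is to pass to the limit $n\to\infty$.

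For the first step I would prove by induction over the mesh points $k/n$ a stronger claim. Consider the family $(U_0, W_{r}-W_{j/n},\ r\in[j/n,(j+1)/n],\ j<k)$. It has the same joint $*$-distribution as the hatted family: $U_0$ is free from $W$, the increments over disjoint intervals are free semicircular, and the same holds on the hatted side, so all joint moments are computed by the universal free-product rule. On each interval, $U^{(n)}_t$ is obtained from $U^{(n)}_{k/n}$ and $W_t-W_{k/n}$ by applying $\alpha,\beta,\gamma$ and then products and sums, as in Remark~\ref{UinA}. Hence $U^{(n)}_t$ is a fixed ``measurable functional'' of this free family, and the hatted version is the same functional. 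For polynomial or functional-calculus type coefficients (the causality-respecting maps discussed in the remarks), equality of the joint laws transfers through the coefficients. Concretely, there is a trace-preserving $*$-isomorphism $W^*(U_0,W_r:r\le t)\to W^*(\widehat U_0,\widehat W_r:r\le t)$, given by the free product of the isomorphisms $W^*(U_0)\cong W^*(\widehat U_0)$ and $W^*(W)\cong W^*(\widehat W)$. I will assume the coefficients intertwine such isomorphisms, $\Phi(\alpha(X))=\alpha(\Phi(X))$, which is the natural reading of the causality remark. Then induction gives $\Phi(U^{(n)}_t)=\widehat U^{(n)}_t$. Since $\Phi$ is trace preserving, it follows that $\tau(R_{U^{(n)}_t}(z))=\widehat\tau(R_{\widehat U^{(n)}_t}(z))$.

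For the second step, I use that the resolvent is Lipschitz from $L^2$ to $L^1$. From $R_X-R_Y=R_X(X-Y)R_Y$ and $\|R_X\|\le 1/\Im z$, we get $|\tau(R_X(z))-\tau(R_Y(z))|\le \|X-Y\|_2/(\Im z)^2$ for $X,Y\in L^2(\mathscr{A},\tau)_{sa}$. Combined with \eqref{eq:converge}, this gives $G_{\mu_{U^{(n)}_t}}(z)\to G_{\mu_{U_t}}(z)$, and likewise on the hatted side. Hence $G_{\mu_{U_t}}=G_{\mu_{\widehat U_t}}$ on $\mathbb{C}^+$, and the Stieltjes inversion \eqref{eq:stiel-inver} gives $\mu_{U_t}=\mu_{\widehat U_t}$. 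Alternatively one can use Proposition~\ref{prop:Cauchy-transform}.

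The main obstacle is the transfer step. Making precise that the Euler iterates on both sides correspond under a trace-preserving isomorphism requires (i) that $W^*(U_0)$ and $W^*(W)$ being free forces the generated algebra to be the reduced free product, which is a standard fact, and (ii) that $\alpha,\beta,\gamma$ commute with such isomorphisms. Point (ii) is automatic for noncommutative polynomials and functional calculus but is really an implicit structural hypothesis on the maps. I would first try to state it explicitly as the meaning of ``do not break causality''. Two routine points remain: the stochastic integrals are $L^2$-limits of Riemann sums, which $\Phi$ maps isometrically, and the affiliated, unbounded $L^2$ case is handled by extending $\Phi$ isometrically to $L^2$.
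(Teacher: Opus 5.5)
Your proposal is correct and follows the paper's proof essentially step for step. Both use Euler approximations on the two sides and prove equality in law of $U^{(n)}_t$ and $\widehat U^{(n)}_t$ by induction over the mesh intervals, then pass to the limit through the resolvent identity $\|(z-X)^{-1}-(z-Y)^{-1}\|_2\le \|X-Y\|_2/(\Im z)^2$ and uniqueness of the Cauchy transform.

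The only difference is in the inductive step. You transport the whole scheme through a trace-preserving $*$-isomorphism $W^*(U_0,W_r:r\le t)\to W^*(\widehat U_0,\widehat W_r:r\le t)$. The paper instead expands polynomial moments using freeness of $W_s-W_{k/n}$ from $\mathscr A_{\le k/n}$, and inducts only on the law of $U^{(n)}_{k/n}$. Both versions rest on the same implicit hypothesis that $\alpha,\beta,\gamma$ act identically on the two spaces; you state it explicitly, while the paper leaves it inside its causality remark.
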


\begin{proof}
Let $\{U_t, W_t\}_{t\geq 0}$ and $\{\widehat{U}_t, \widehat{W}_t\}_{t \geq 0}$ be two weak solutions to the free SDE \eqref{eq:fSDE}, and suppose that $U_0$ and $\widehat{U}_0$ have the same distribution.

Denote by $\{U_t^{(n)}\}_{t\ge0}$ and $\{\widehat{U}_t^{(n)}\}_{t\ge0}$ the Euler approximations \eqref{eq:Euler-approx} 
corresponding to $(U_0, W_t)$ and $(\widehat{U}_0, \widehat{W}_t)$, respectively.
We first show that for every $n \in \mathbb{N}$ and $t \ge 0$, $U_t^{(n)}$ and $\widehat{U}_t^{(n)}$ have the same distribution.
Since they are bounded self-adjoint operators, their distributions are determined by polynomial moments \cite[Remark 1.9]{NS2006}; 
it therefore suffices to prove that for every polynomial $p$,
\begin{equation*}
    \tau\bigl(p(U_t^{(n)})\bigr) = \tau\bigl(p(\widehat{U}_t^{(n)})\bigr), \qquad t \ge 0.
\end{equation*}

We argue by induction over the mesh intervals.
For the first interval, let $0<s\le \frac1n\wedge t$. Then
\begin{equation*}
U_s^{(n)}
=
U_0+s\,\alpha(U_0)+\beta(U_0)\,W_s\,\gamma(U_0),
\end{equation*}
and
\begin{equation*}
\widehat U_s^{(n)}
=
\widehat U_0+s\,\alpha(\widehat U_0)
+\beta(\widehat U_0)\,\widehat W_s\,\gamma(\widehat U_0).
\end{equation*}
Since $\alpha,\beta,\gamma$ do not break causality, we have $\alpha(U_0),\beta(U_0),\gamma(U_0)\in W^*(U_0).$ Then $\tau\bigl(p(U_s^{(n)})\bigr)$ can be expressed as a linear combination of products of the form
\begin{equation*}
\tau(p_1W_sp_2W_s\cdots p_{d-1}W_sp_d),
\end{equation*}
where each $p_i$ is a polynomial in $U_0,\alpha(U_0),\beta(U_0)$ and $\gamma(U_0)$.
Moreover, $W_s$ is free from $W^*(U_0)$. Then by the free moment cumulant formula \cite{NS2006}, $\tau(p_1W_sp_2W_s\cdots p_{d-1}W_sp_d)$ can be expressed as a linear combination of products of the form
\begin{equation*}
\Pi_{r=1}^{d'}(\tau(\Pi_{i\in I_r}p_i)) \Pi_{m=1}^v \tau(W_s^{l_m}),
\end{equation*}
where $\{1,\ldots,d\}=I_1\cup\cdots\cup I_{d'},$ $d=l_1+\cdots+l_v.$ 

Similarly, using the fact that $\widehat W_s$ is free from $W^*(\widehat{U}_0),$
$\tau\bigl(p(\widehat U_s^{(n)})\bigr)$ can be expressed as a linear combination of products of the form
\begin{equation*}
\Pi_{r=1}^{d'}(\tau(\Pi_{i\in I_r}\widehat p_i)) \Pi_{m=1}^v \tau(\widehat W_s^{l_m}),
\end{equation*}
where each $\widehat{p}_i = p_i (\widehat U_0,\alpha(\widehat U_0),\beta(\widehat U_0), \gamma(\widehat U_0))$.

Since $U_0$ and $\widehat U_0$ have the same distribution, it follows that for any polynomial $p$,
\begin{equation*}
\tau\bigl(p(U_s^{(n)})\bigr)
=
\tau\bigl(p(\widehat U_s^{(n)})\bigr),
\qquad 0<s\le \frac1n\wedge t.
\end{equation*}

Now fix $k\in\mathbb N$ and assume that $U_{k/n}^{(n)}$ and $\widehat U_{k/n}^{(n)}$ have the same distribution. Let $s\in \Bigl(\frac{k}{n},\frac{k+1}{n}\Bigr]\cap[0,t]$ and set
$\Delta W_{k,s}:=W_s-W_{k/n},~\Delta \widehat W_{k,s}:=\widehat W_s-\widehat W_{k/n}.$
Then
\begin{equation*}
U_s^{(n)}
=
U_{k/n}^{(n)}
+\Bigl(s-\frac{k}{n}\Bigr)\alpha\bigl(U_{k/n}^{(n)}\bigr)
+\beta\bigl(U_{k/n}^{(n)}\bigr)\,\Delta W_{k,s}\,\gamma\bigl(U_{k/n}^{(n)}\bigr),
\end{equation*}
and
\begin{equation*}
\widehat U_s^{(n)}
=
\widehat U_{k/n}^{(n)}
+\Bigl(s-\frac{k}{n}\Bigr)\alpha\bigl(\widehat U_{k/n}^{(n)}\bigr)
+\beta\bigl(\widehat U_{k/n}^{(n)}\bigr)\,\Delta \widehat W_{k,s}\,\gamma\bigl(\widehat U_{k/n}^{(n)}\bigr).
\end{equation*}

Note that $\Delta W_{k,s}$ and $\Delta \widehat W_{k,s}$ have the same semicircular distribution with variance $s-\frac{k}{n}$, and $\Delta W_{k,s}$ is free from $\mathscr A_{ \leq k/n}$ and $\Delta \widehat W_{k,s}$ is free from $\widehat{\mathscr A}_{ \leq k/n}:= W^*(\widehat{U}_0, \widehat{W}_r:r\le k/n).$
Therefore, by the induction hypothesis and the assumption on $\alpha,\beta,\gamma$, the same procedure as above yields that for every polynomial $p$,
\begin{equation*}
\tau\bigl(p(U_s^{(n)})\bigr)
=
\tau\bigl(p(\widehat U_s^{(n)})\bigr).
\end{equation*}
This proves that $U_t^{(n)}$ and $\widehat U_t^{(n)}$ have the same distribution for every $n\in\mathbb N$ and every $t\ge0$.

It remains to pass to the limit as $n\to\infty$. Fix $t\ge0$ and $z\in\mathbb C^+$. Since $U_t^{(n)}\to U_t $ in $L^2(\mathscr A,\tau),$
the resolvent identity yields
\begin{equation*}
(z-U_t^{(n)})^{-1}-(z-U_t)^{-1}
=
(z-U_t^{(n)})^{-1}(U_t^{(n)}-U_t)(z-U_t)^{-1},
\end{equation*}
and hence
\begin{equation}\label{eq:cauchy-conv}
\bigl\|(z-U_t^{(n)})^{-1}-(z-U_t)^{-1}\bigr\|_2
\le
\frac{1}{(\Im z)^2}\,\|U_t^{(n)}-U_t\|_2 \to 0.
\end{equation}
Therefore,
\begin{equation*}
\tau\bigl((z-U_t^{(n)})^{-1}\bigr)\to \tau\bigl((z-U_t)^{-1}\bigr).
\end{equation*}
Similarly, since $\widehat U_t^{(n)}\to \widehat U_t$ in $L^2(\mathscr A,\tau),$ we deduce
\begin{equation*}
\tau\bigl((z-\widehat U_t^{(n)})^{-1}\bigr)\to
\tau\bigl((z-\widehat U_t)^{-1}\bigr).
\end{equation*}
Since $U_t^{(n)}$ and $\widehat U_t^{(n)}$ have the same distribution for every $n$, we have
\begin{equation*}
\tau\bigl((z-U_t^{(n)})^{-1}\bigr)
=
\tau\bigl((z-\widehat U_t^{(n)})^{-1}\bigr),
\qquad z\in\mathbb C^+.
\end{equation*}
Passing to the limit gives
\begin{equation*}
\tau\bigl((z-U_t)^{-1}\bigr)
=
\tau\bigl((z-\widehat U_t)^{-1}\bigr),
\qquad z\in\mathbb C^+.
\end{equation*}
Hence $U_t$ and $\widehat U_t$ have the same Cauchy transform, and therefore the same distribution. 
\end{proof}

\begin{remark}\label{rmk:weak-unique-free-diffusion}
For the free diffusion equation \eqref{eq:fDiff} such that the drift coefficient $\alpha$ satisfies \eqref{eq:local-operator-Lip} and \eqref{eq:speicher-biane}, Assumption \ref{assumption:ass1} may not hold. However, Biane and Speicher showed there is a Picard iteration $U_t^{(n)}$ given by \eqref{eq:picard-iter}, such that $\|U_t^{(n)} - U_t\| \to 0$ as $n \to \infty$ (and a similar statement holds for $\widehat{U}_t$). Moreover, by a similar argument as above, we may show that  
$\tau\bigl(p(U_t^{(n)})\bigr) =\tau\bigl(p(\widehat U_t^{(n)})\bigr)$ for any polynomial $p$, and thus establish the weak uniqueness of the free diffusion equation \eqref{eq:fDiff}. 
\end{remark}

\begin{remark}\label{rmk:weak-unique}
For the equation \eqref{eq:fSDE-one-side}, assume that the coefficients satisfy the local operator Lipschitz condition. Capitaine and Donati‑Martin \cite[Proposition A.1]{capitaine_free_2005} also constructed Picard iterates converging to the solution. Hence, by an argument similar to Remark~\ref{rmk:weak-unique-free-diffusion}, weak uniqueness also holds for the equation \eqref{eq:fSDE-one-side}.
\end{remark}


\subsection{Free Markov property}\label{subsec:free-markov-process}
Recall that 
for each $t\ge0,$ 
\begin{equation*}
\mathscr A_{\le t}=W^*(U_0,W_r:r\le t),\quad
\mathscr A_{\ge t}=W^*(U_t,W_r-W_t:r\ge t),\quad
\mathscr A_{=t}=W^*(U_t).
\end{equation*}
And
\begin{equation*}
\mathscr B_{\le t}=W^*(U_r:r\le t),
\qquad
\mathscr B_{\ge t}=W^*(U_r:r\ge t).
\end{equation*}

\begin{proof}[Proof of Proposition~\ref{prop:free-Markov}]
By Theorem \ref{thm:solution}, there exists a unique solution $\{U_t\}_{t\ge0}$ to \eqref{eq:fSDE}. 
Fix $t\ge0$, we shall prove that
\begin{equation*}
\mathscr B_{\le t}\subseteq \mathscr A_{\le t},
\qquad
\mathscr B_{\ge t}\subseteq \mathscr A_{\ge t}.
\end{equation*}

We first show that $\mathscr B_{\le t}\subseteq \mathscr A_{\le t}$.
Let $\{U_s^{(n)}\}_{s\ge0}$ be the Euler approximation given by \eqref{eq:Euler-approx}. Since $U_s^{(n)}\to U_s$ in $L^2(\mathscr A,\tau)$, it follows from Remark~\ref{UinA} that $U_s\in L^2(\mathscr A_{\le t},\tau), 0\le s\le t.$
Thus,
\begin{equation*}
\mathscr B_{\le t}=W^*(U_s:0\le s\le t)\subseteq \mathscr A_{\le t}.
\end{equation*}

Next we show that $\mathscr B_{\ge t}\subseteq \mathscr A_{\ge t}$.
For $s\ge t$, the solution satisfies
\begin{equation*}
U_s
=
U_t+\int_t^s \alpha(U_r)\, {\rm d}r+\int_t^s \beta(U_r)\,{\rm d}W_r\,\gamma(U_r).
\end{equation*}
Define the shifted free Brownian motion
\begin{equation*}
\widetilde W_r:=W_{t+r}-W_t,\qquad r\ge0.
\end{equation*}
Since $\{W_t\}_{t\ge0}$ is a free Brownian motion, the increment process
$\{\widetilde W_r\}_{r\ge0}$ is again a free Brownian motion. Then the above equation becomes
\begin{equation*}
U_{t+r}
=
U_t+\int_0^r \alpha(U_{t+h})\, {\rm d}h+\int_0^r \beta(U_{t+h})\,{\rm d}\widetilde W_h\,\gamma(U_{t+h}),
\qquad r\ge0.
\end{equation*}

Applying the same argument as Remark~\ref{UinA} to this shifted equation with initial value $U_t$ and driving noise $\widetilde W$, we obtain $U_{t+r}\in L^2(\mathscr{A}_{\ge t},\tau)$ for $r\ge0$, where we have used the fact that $W^*(U_t,\widetilde W_h:h\le r)\subseteq \mathscr{A}_{\ge t}$.
Therefore
\begin{equation*}
\mathscr B_{\ge t}=W^*(U_s:s\ge t)\subseteq \mathscr A_{\ge t}.
\end{equation*}

Finally, since $\{W_t\}_{t\ge0}$ is a free Brownian motion and $U_0$ is free from it, the algebra $W^*(W_h-W_t:h\ge t)$
is free from $\mathscr A_{\le t}$, and moreover,
\begin{equation*}
\mathscr A_{=t}=W^*(U_t)=\mathscr B_{=t}\subseteq \mathscr A_{\le t}.
\end{equation*}
Therefore, by \cite[Lemma 2.1]{bianeFreeDiffusionsFree2001}, the subalgebras $\mathscr A_{\le t}$ and $\mathscr A_{\ge t}$ are $\mathscr A_{=t}$-free.
Since $\mathscr B_{\le t}\subseteq \mathscr A_{\le t},\mathscr B_{\ge t}\subseteq \mathscr A_{\ge t},$ it follows that $\mathscr B_{\le t}$ and $\mathscr B_{\ge t}$ are $\mathscr B_{=t}$-free.
Hence, we conclude that $\{U_t\}_{t\ge0}$ is a free Markov process.
\end{proof}

\section{Stationary distribution of free SDEs}\label{sec:stat}

\subsection{Existence and uniqueness of the stationary distribution}
In this section, we still set $l = 1$ in \eqref{eq:fSDE} for simplicity. By our assumptions, for any given free Brownian motion $\{W_t\}_{t\geq 0}$ and for any initial value $u$ (at time $0$) that is free from $\{W_t\}_{t\geq 0}$, there exists a unique solution to the free SDE. We denote this solution by $U_t^u$ to emphasize its dependence on the initial value, and it satisfies $\sup_{t\geq 0} \|U_t^u\| < \infty.$ 

\begin{notation}
Let $u,v\in \mathscr A_{sa},$ for any $0\le t< \infty,$ we denote
\begin{itemize}
\item $\Delta_{u,v} [U_t]:=U^u_{t}-U^v_{t};$  
\item $\Delta_{u,v}[\alpha_t]:=\alpha (U^u_{t})-\alpha (U^v_{t});$
\item $\Delta_{u,v}[\beta_t]:=\beta (U^u_{t})-\beta (U^v_{t});$
\item $\Delta_{u,v}[\gamma_t]:=\gamma (U^u_{t})-\gamma (U^v_{t}).$
\end{itemize}
\end{notation}
By \eqref{eq:alpha-onesideLip-loc} in Assumption \hyperlink{assumption:ass3}{(B-1)} , we have 
\begin{equation}\label{eq:difference-s-1}
\tau\left(\Delta_{u,v} [U_t] \cdot \Delta_{u,v} [\alpha_t] \right) \leq L'_\alpha \|\Delta_{u,v} [U_t]\|_2^2.
\end{equation}
Moreover, 
\begin{equation}\label{eq:difference-s-2}
\| \Delta_{u,v} [\beta_t] \|_2 \leq L_\beta \|\Delta_{u,v} [U_t]\|_2 \; \text{and} \; \| \Delta_{u,v} [\gamma_t] \|_2 \leq L_\gamma \|\Delta_{u,v} [U_t]\|_2.
\end{equation}
Meanwhile, it follows from \eqref{eq:bound-loc} that for any $x \in \{u, v\}$ and $t\ge0,$ 
\begin{equation}\label{eq:bound-s}
 \| \beta(U^x_{t}) \|_2 \leq K_\beta \; \text{and} \; \| \gamma(U^x_{t}) \|_2 \leq K_\gamma.
\end{equation}

With the above notations, we have the following propositions, which play important roles in the proof of Theorem~\ref{thm:stationary}.

\begin{proposition}\label{prop:Stability-prop-1}
Assume the hypotheses of Theorem~\ref{thm:stationary}. For any given free Brownian motion $\{W_t\}_{t\geq 0}$,  let $u, v \in \mathscr{A}_{sa}$ that are free from $\{W_t\}_{t\geq 0}.$
Let $\{U^u_{t}\}_{t \geq 0},\{U^v_{t}\}_{t \geq 0}$ be the unique solutions to the free SDE \eqref{eq:fSDE} with initial values $u,v$, respectively. Then there exists a constant $C>0$ such that
\begin{equation}\label{eq:conti}
 \left\| U^u_t - U^v_t \right\|_2 \leq e^{-Ct}\|u-v\|_2
\end{equation}
for any $t \geq 0.$
\end{proposition}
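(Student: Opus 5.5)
The plan is to apply the free It\^o formula \eqref{eq:free-ito-polynomial} to $e^{2Ct}(\Delta_{u,v}[U_t])^2$, take the trace, and reduce everything to a differential inequality for $\|\Delta_{u,v}[U_t]\|_2^2$. This is the same strategy as in the proof of Lemma~\ref{lem:Cauchy-prop-stopping-time}, but now with the one-sided bound \eqref{eq:difference-s-1} in place of the Lipschitz bound for the drift. Throughout, set $M:=\sup_{t\ge0}\max(\|U^u_t\|,\|U^v_t\|)<\infty$; this is finite by hypothesis. The constants $L'_\alpha, L_\beta, L_\gamma, K_\beta, K_\gamma$ are then those associated with this $M$, so \eqref{eq:difference-s-1}, \eqref{eq:difference-s-2} and \eqref{eq:bound-s} hold for all $t\ge0$.

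\textbf{Step 1: the difference equation.} Write
\begin{equation*}
\Delta_{u,v}[U_t]=u-v+\int_0^t\Delta_{u,v}[\alpha_s]\,{\rm d}s+\int_0^t\Delta_{u,v}[\beta_s]\,{\rm d}W_s\,\gamma(U^u_s)+\int_0^t\beta(U^v_s)\,{\rm d}W_s\,\Delta_{u,v}[\gamma_s].
\end{equation*}
All integrands lie in $\mathscr A_{\le s}$ and are bounded in operator norm, so the free stochastic integrals are well defined. Applying \eqref{eq:free-ito-polynomial} together with the product rule for the factor $e^{2Ct}$ yields an identity of the same shape as \eqref{eq:DU-bound-ito}, with a local martingale part $M_t$.

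\textbf{Step 2: take the trace.} By Lemma~\ref{lem:tau0}, $\tau(M_t)=0$; its integrability hypothesis holds because all processes are uniformly bounded in norm. Expanding the It\^o correction exactly as in \eqref{eq:DU-bound-estimate}, we obtain
\begin{equation*}
\begin{split}
e^{2Ct}\|\Delta_{u,v}[U_t]\|_2^2 = \|u-v\|_2^2+\int_0^t e^{2Cs}\Bigl(&2C\|\Delta_{u,v}[U_s]\|_2^2+2\tau(\Delta_{u,v}[U_s]\Delta_{u,v}[\alpha_s])\\
&+\bigl(\tau(\gamma(U^u_s)\Delta_{u,v}[\beta_s])+\tau(\Delta_{u,v}[\gamma_s]\beta(U^v_s))\bigr)^2\Bigr)\,{\rm d}s.
\end{split}
\end{equation*}
The squared bracket collects the three trace-product terms of \eqref{eq:DU-bound-estimate}, which add up to a perfect square.

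\textbf{Step 3: bound the integrand.} Use \eqref{eq:difference-s-1} for the drift term. For the diffusion term, combine Cauchy--Schwarz with \eqref{eq:difference-s-2} and \eqref{eq:bound-s}; this bounds the square by $(L_\beta K_\gamma+L_\gamma K_\beta)^2\|\Delta_{u,v}[U_s]\|_2^2$. The integrand is therefore at most
\begin{equation*}
e^{2Cs}\bigl(2C+2L'_\alpha+(L_\beta K_\gamma+L_\gamma K_\beta)^2\bigr)\|\Delta_{u,v}[U_s]\|_2^2.
\end{equation*}
Condition \eqref{eq:stationary-condition} is exactly what allows us to choose
\begin{equation*}
C:=-L'_\alpha-\tfrac12(L_\beta K_\gamma+L_\gamma K_\beta)^2>0,
\end{equation*}
which makes the coefficient nonpositive. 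Hence $e^{2Ct}\|\Delta_{u,v}[U_t]\|_2^2\le\|u-v\|_2^2$, and taking square roots gives \eqref{eq:conti}. For general $l$, the same computation goes through with the sum over $i$ inside the square, matching \eqref{eq:stationary-condition}.

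\textbf{Main obstacle.} The hardest part is justifying Step 2, namely that the trace of the martingale term vanishes. This needs an adaptedness and integrability argument: apply Lemma~\ref{lem:tau0} with integrands of the form $\Delta_{u,v}[U_s]\Delta_{u,v}[\beta_s]$ and $\gamma(U^u_s)$, which have bounded $L^2$ norms thanks to $\sup_t\|U_t\|<\infty$. The other delicate point is keeping the It\^o correction exactly as a square of traces rather than a product of $L^2$ norms, since that is what lets the diffusion terms combine into the constant $(L_\beta K_\gamma+L_\gamma K_\beta)^2$.
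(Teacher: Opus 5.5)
Your proof follows the paper's argument: free It\^o formula for the exponentially weighted square of $\Delta_{u,v}[U_t]$, take the trace, remove the ${\rm d}W$ terms with Lemma~\ref{lem:tau0}, bound using \eqref{eq:difference-s-1}, \eqref{eq:difference-s-2}, \eqref{eq:bound-s}, and choose the exponent so the coefficient is nonpositive. Your $C$ is the paper's $C_4/2$.

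One claim in Step 2 is false: the three trace terms do not combine into the perfect square you wrote. The cross It\^o product is
\begin{equation*}
\Delta_{u,v}[\beta_s]\,{\rm d}W_s\,\gamma(U^u_s)\cdot\beta(U^v_s)\,{\rm d}W_s\,\Delta_{u,v}[\gamma_s]=\tau\bigl(\gamma(U^u_s)\beta(U^v_s)\bigr)\,\Delta_{u,v}[\beta_s]\,\Delta_{u,v}[\gamma_s]\,{\rm d}s .
\end{equation*}
So after taking the trace, the middle term is $2\,\tau(\Delta_{u,v}[\beta_s]\Delta_{u,v}[\gamma_s])\,\tau(\beta(U^v_s)\gamma(U^u_s))$. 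It is not $2\,\tau(\gamma(U^u_s)\Delta_{u,v}[\beta_s])\,\tau(\Delta_{u,v}[\gamma_s]\beta(U^v_s))$, and the two differ in general.

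The error is harmless. Bounding each of the three terms separately by Cauchy--Schwarz, as the paper does, still gives $(L_\beta K_\gamma+L_\gamma K_\beta)^2\|\Delta_{u,v}[U_s]\|_2^2$. Consequently, the ``delicate point'' you flag, keeping the correction as a square of traces, is not actually needed.
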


\begin{proof}
By \eqref{eq:solution-fSDE}, we have for any $t \geq 0$
\begin{equation*}
\begin{split}
\Delta_{u,v} [U_t] =& \Delta_{u,v} [U_{0}]+\int_{0}^t \Delta_{u,v}[\alpha_s] {\rm d}s \\
& \quad +\int_{0}^t \Delta_{u,v}[\beta_s]  {\rm d} W_s \gamma(U^u_{s}) +\int_{0}^t\beta (U^v_{s}) {\rm d} W_s \Delta_{u,v}[\gamma_s].
\end{split}
\end{equation*}
Then, for a constant $C_4$, the free It\^{o} formula readily implies that 
\begin{equation*}\label{eq:ito}
		\begin{split}
		d & \left[ e^{C_4 t} (\Delta_{u,v} [U_t])^2 \right]\\
		&=e^{C_4 t} \left( \Delta_{u,v} [U_t] \cdot \Delta_{u,v}[\alpha_t]  {\rm d}t+\Delta_{u,v}[\alpha_t] \cdot \Delta_{u,v} [U_t] {\rm d}t+ C_4 (\Delta_{u,v} [U_t])^2  {\rm d} t  \right) \\
		& \quad 
			+ e^{C_4 t} \tau \left(\gamma(U^u_{t})\cdot\Delta_{u,v}[\beta_t]\right)\Delta_{u,v}[\beta_t] \cdot \gamma(U^u_{t}) {\rm d}t  \\
		& \quad 
			+ e^{C_4 t} \tau \left(\gamma(U^u_{t})\,\beta (U^v_{t}) \right)\Delta_{u,v}[\beta_t] \cdot \Delta_{u,v}[\gamma_t] {\rm d}t  \\
		& \quad 
			+ e^{C_4 t} \tau \left( \Delta_{u,v}[\gamma_t] \cdot \Delta_{u,v} [\beta_t] \right)\beta (U^v_{t})\,\gamma(U^u_{t})  {\rm d}t \\
		& \quad 
			+ e^{C_4 t} \tau \left( \Delta_{u,v}[\gamma_t] \cdot \beta (U^v_{t}) \right)\beta (U^v_{t}) \cdot  \Delta_{u,v}[\gamma_t]{\rm d}t \\			
		& \quad  
			+ e^{C_4 t} \Delta_{u,v} [U_t] \cdot  \Delta_{u,v}[\beta_t] {\rm d} W_t  \gamma(U^u_{t}) +  e^{C_4 t} \Delta_{u,v} [U_t] \cdot \beta (U^v_{t})  {\rm d} W_t  \Delta_{u,v}[\gamma_t]  \\
		& \quad  
			+ e^{C_4 t} \Delta_{u,v}[\beta_t]  {\rm d} W_t  \gamma(U^u_{t}) \Delta_{u,v} [U_t] +  e^{C_4t} \beta (U^v_{t})  {\rm d} W_t  \Delta_{u,v} [\gamma_t] \cdot  \Delta_{u,v} [U_t].
		\end{split}
		\end{equation*}
Integrating both sides of the above equation from $0$ to $t$ and taking the trace, we obtain 
\begin{equation*}
		\begin{split}
		& e^{C_4 t}  \| \Delta_{u,v} [U_t]\|_2^2 - \| \Delta_{u,v} [U_{0}]\|_2^2 \\
		&\quad =2\int_{0}^t e^{C_4 s} \tau\left(\Delta_{u,v} [U_s] \cdot \Delta_{u,v} [\alpha_s] \right) {\rm d} s  +C_4 \int_{0}^t e^{C_4 s} \| \Delta_{u,v} [U_s]\|_2^2 {\rm d}s\\
		& \quad \quad +   \int_{0}^t  e^{C_4 s} \left(\tau \big(\gamma(U^u_{s})\cdot\Delta_{u,v}[\beta_s]\big) \right)^2 {\rm d} s  \\
		&\quad \quad + 2  \int_{0}^t  e^{C_4 s} \tau \left( \Delta_{u,v}[\beta_s]\cdot  \Delta_{u,v}[\gamma_s] \right) \cdot \tau \left(\beta(U^v_{s})\gamma(U^u_{s})\right) {\rm d} s\\
		& \quad \quad +   \int_{0}^t  e^{C_4 s} \left(\tau \big( \Delta_{u,v}[\gamma_s] \cdot \beta (U^v_{s})\big) \right)^2 {\rm d} s,
		\end{split}
		\end{equation*}	
where we note that the terms corresponding to ${\rm d} W_t$ all vanish by Lemma~\ref{lem:tau0}. Indeed, each such term is a finite linear combination of terms of the form $\tau\left(\int_0^t A_s\, {\rm d} W_s\,B_s\right),$ where it is enough to consider $A_s=e^{C_4s}U_s^x\beta(U_s^y),$ $B_s=\gamma(U_s^z)$ or $A_s=e^{C_4s}\beta(U_s^x),~B_s=\gamma(U_s^y)U_s^z$
with $x,y,z\in\{u,v\}$. These processes are $\mathscr{A}_{\le s}$-adapted. Moreover, using the boundness condition \eqref{eq:bound-s} on $\beta$ and $\gamma$, we have, for every fixed $t\ge 0$,
\begin{equation*}
\int_0^t 
\|e^{C_4s}U_s^x\beta(U_s^y)\|_2^2\cdot
\|\gamma(U_s^z)\|_2^2\,{\rm d}s 
\le
e^{2C_4t}K_\beta^2K_\gamma^2
\int_0^t
\|U_s^x\|^2\, {\rm d}s
<\infty .
\end{equation*}
Similarly,
\begin{equation*}
\int_0^t 
\|e^{C_4s}\beta(U_s^x)\|_2^2 \cdot
\|\gamma(U_s^y)U_s^z\|_2^2\, {\rm d}s<\infty .
\end{equation*}
Hence Lemma~\ref{lem:tau0} applies, and all stochastic integral terms have trace zero.

Hence, by \eqref{eq:difference-s-1}, \eqref{eq:difference-s-2}, and \eqref{eq:bound-s}, we have 
\begin{equation*}
\begin{split}
 e^{C_4 t} \| \Delta_{u,v} & [U_t]\|_2^2  - \| \Delta_{u,v} [U_{0}]\|_2^2\\
& \leq  (2L'_\alpha+ C_4 )\int_{0}^t e^{C_4 s} \| \Delta_{u,v} [U_s] \|^2_2 {\rm d}s \\
& \quad +L^2_{\beta}\int_{0}^t e^{C_4 s} \left\|\gamma(U^u_{s}) \right\|^2_2 \left\| \Delta_{u,v} [U_s] \right\|_2^2  {\rm d} s\\
& \quad +2L_{\beta}L_{\gamma}\int_{0}^t e^{C_4 s} \left\|\gamma(U^u_{s}) \right\|_2 \left\|\beta(U^v_{s}) \right\|_2  \left\| \Delta_{u,v} [U_s] \right\|_2^2 {\rm d} s\\
& \quad +L^2_{\gamma}\int_{0}^t e^{C_4 s}  \left\|\beta(U^v_{s}) \right\|^2_2  \left\| \Delta_{u,v} [U_s] \right\|_2^2 {\rm d} s\\
& \leq (2 L'_\alpha+C_4+ (L_{\beta}K_{\gamma}+L_{\gamma}K_{\beta})^2) \int_{0}^t e^{C_4 s} \left\|\Delta_{u,v} [U_s]\right\|_2^2 {\rm d}s.
\end{split}
\end{equation*}
Setting $C_4:=-2 L'_\alpha-(L_{\beta}K_{\gamma}+L_{\gamma}K_{\beta})^2> 0,$ we conclude that for any $t \geq 0,$
\begin{equation*}
\| \Delta_{u,v}  [U_t] \|_2^2  \leq e^{-C_4 t}\| \Delta_{u,v} [U_{0}] \|_2^2 =e^{-C_4t}\|u-v\|_2^2.
\end{equation*}
\end{proof}

\begin{proposition}\label{prop:Cauchy-prop-2}
Assume the hypotheses of Theorem~\ref{thm:stationary}. Then there exists a probability measure $\mu$ on $\mathbb{R}$ such that, for any free Brownian motion $\{W_t\}_{t \geq 0}$ and any initial value $u=U_0 \in \mathscr{A}_{sa}$ that is free from $\{W_t\}_{t\geq 0}$, the distribution of the unique solution $\{U^u_t\}_{t \geq 0}$ to \eqref{eq:fSDE} converges to $\mu$ in $W_2$ as $t\to\infty$.
\end{proposition}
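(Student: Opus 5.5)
The plan is to show that $t\mapsto \mu_{U^u_t}$ is Cauchy in $(\mathcal P_2(\mathbb R),W_2)$, to identify its limit, and then to check that this limit does not depend on $u$ or on the driving noise. Completeness of $(\mathcal P_2(\mathbb R),W_2)$ (the one-variable free Wasserstein metric coincides with the classical one) then gives a limit $\mu$.

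\emph{Step 1 (Cauchy estimate via a time shift).} Fix $t\ge0$ and $h>0$. I would compare $U^u_{t+h}$ with $U^u_t$ by restarting the equation. Set $\widetilde W_r:=W_{h+r}-W_h$, which is again a free Brownian motion, and $v:=U^u_h\in\mathscr A_{\le h}$. Then $v$ is free from $\{\widetilde W_r\}_{r\ge0}$. By strong uniqueness, the process $r\mapsto U^u_{h+r}$ is the unique solution driven by $\widetilde W$ with initial value $v$. Let $\widetilde U^{u}_r$ be the solution driven by $\widetilde W$ with initial value $u$. This requires $u$ to be free from $\widetilde W$, which holds because $u\in\mathscr A_{\le 0}$ and $\widetilde W$ consists of increments after time $h$.

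By weak uniqueness, $\widetilde U^u_t$ has the same distribution as $U^u_t$. Proposition~\ref{prop:Stability-prop-1}, applied with the noise $\widetilde W$, then gives
\begin{equation*}
W_2(\mu_{U^u_{t+h}},\mu_{U^u_t})\le \|U^u_{t+h}-\widetilde U^u_t\|_2\le e^{-Ct}\|U^u_h-u\|_2\le e^{-Ct}\bigl(\sup_{s\ge0}\|U^u_s\|+\|u\|\bigr).
\end{equation*}
The right-hand side is finite by the assumed uniform operator-norm bound, and it tends to $0$ uniformly in $h$. Hence $(\mu_{U^u_t})_t$ is $W_2$-Cauchy. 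All the measures have compact support, so they lie in $\mathcal P_2(\mathbb R)$, and the family converges to some $\mu^u\in\mathcal P_2(\mathbb R)$.

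\emph{Step 2 (independence of the initial value and of the noise).} For two initial values $u,v$ free from the same $W$, Proposition~\ref{prop:Stability-prop-1} gives $W_2(\mu_{U^u_t},\mu_{U^v_t})\le e^{-Ct}\|u-v\|_2\to0$, so $\mu^u=\mu^v$.

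To compare a solution driven by $W$ with one driven by $\widehat W$ from an initial value $\hat v$, I would realize both in a single free product space. I would take a free product $(\mathscr A,\tau)*(\widehat{\mathscr A},\widehat\tau)$ and then construct an initial value $v'$, free from $W$, with the same distribution as $\hat v$. One way is to adjoin a copy of $W^*(\hat v)$ freely to $W^*(u,W_r:r\ge0)$. By weak uniqueness, $U^{v'}_t$ (driven by $W$) has the same law as the original $\widehat U^{\hat v}_t$. Therefore $\mu^{\hat v}=\mu^{v'}=\mu^u$, and the limit $\mu$ is universal.

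\emph{Main obstacle.} I expect the delicate point to be the bookkeeping of freeness and of weak uniqueness in Steps 1 and 2. Concretely, one must justify that
\begin{itemize}
\item restarting at time $h$ with the shifted noise is legitimate;
\item $u$ remains free from the shifted Brownian motion;
\item the auxiliary initial values can be realized in a larger tracial $W^*$-probability space while preserving the hypotheses of Theorem~\ref{thm:stationary} there.
\end{itemize}
I would handle the last point by noting that the hypotheses are stated for any free Brownian motion and any free initial value, so enlarging the von Neumann algebra is harmless. The analytic estimates themselves follow directly from Proposition~\ref{prop:Stability-prop-1}.
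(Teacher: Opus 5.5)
Your proof is correct. It follows the same overall scheme as the paper: a $W_2$-Cauchy estimate from Proposition~\ref{prop:Stability-prop-1} combined with weak uniqueness, completeness of $(\mathcal P_2(\mathbb R),W_2)$, and then independence of the limit. The coupling in the Cauchy step, however, is arranged differently.

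\emph{The paper's coupling.} It introduces a free copy $\widehat W$ of $W$ in an enlarged free product. It runs the equation from $u$ on $[0,s]$ with $\widehat W$ to produce an operator $\widehat U^u_s$ that has law $\mu_{U^u_s}$ and is free from $W$. It then compares the $W$-driven solutions started at $u$ and at $\widehat U^u_s$. This needs weak uniqueness twice (at times $s$ and $t+s$), and the joint freeness of $u$, $W$, $\widehat W$ must be arranged by the construction.

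\emph{Your coupling.} You restart at time $h$ with the shifted noise $\widetilde W_r=W_{h+r}-W_h$, and you place the fresh solution at $u$ rather than at the intermediate time. This buys several things:
\begin{itemize}
\item Both $u$ and $U^u_h$ lie in $\mathscr A_{\le h}$, which is free from $\widetilde W$, so the joint freeness required by the stability proposition is automatic.
\item No enlargement of the algebra is needed, and weak uniqueness is used only once, to identify $\mu_{\widetilde U^u_t}$ with $\mu_{U^u_t}$.
\item Strong uniqueness identifies $U^u_{h+\cdot}$ with the restarted solution.
\item Your constant $\sup_{s\ge0}\|U^u_s\|+\|u\|$ is visibly uniform in $h$. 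The paper's $M$ depends on $\sup_{[0,s]}\|\widehat U^u_t\|$, and making it uniform in $s$ needs the unstated remark that equal laws give equal operator norms.
\end{itemize}

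\emph{Your Step 2.} You prove that the limit does not depend on the driving noise. You also handle initial values that are not a priori jointly free from a common Brownian motion. The statement asserts this, but the paper's proof only checks two initial values driven by a common $W$. To do it you need a free-product realization, which is exactly the device the paper uses in its Cauchy step instead.
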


\begin{proof}
We adapt the idea of Guionnet and Shlyakhtenko (see \cite[Theorem 2.2]{guionnet_free_2009}). Fix $s\ge 0$, 
let $\{\widehat{W}_t\}_{t \geq 0}$ be a free copy of $\{W_t\}_{t\geq 0}$ that is free from $u.$
By our assumptions, we can construct the process $\{\widehat{U}^u_t\}_t$ as $\{U^u_t\}_t$ on $[0, s],$ namely, 
\begin{equation*}
\widehat{U}^u_t = u +\int_{0}^{t} \alpha(\widehat{U}^u_z) {\rm d} z + \int_{0}^{t} \beta( \widehat{U}^u_z) {\rm d}\widehat{W}_z \gamma (\widehat{U}^u_z)
\end{equation*} 
$t \in [0, s].$ It is clear that $\widehat U_s^u$ is free from $\{W_t\}_{t\ge0}$, and moreover, $\{\widehat{U}_t^u, \widehat{W}_t\}_{t \in [0,s]}$ is a weak solution to \eqref{eq:fSDE} on $[0, s]$. Then, the weak uniqueness of the equation implies that
$\mu_{\widehat U_s^u}=\mu_{U_s^u}.$ 

Let $\widehat U_{t+s}^u$ be the solution starting from $\widehat U_s^u$ and driven by $\{W_t\}_{t\ge0}$. Then Proposition~\ref{prop:Stability-prop-1} implies
\begin{equation*}
\begin{split}
\left\|U_t^u-\widehat U_{t+s}^u\right\|_2 &\le
e^{-ct}\|u-\widehat U_s^u\|_2\\
& \le e^{-ct}\|u-\widehat U_s^u\|
\le Me^{-ct},
\end{split}
\end{equation*}
where $M$ depends on $\|u\|$ and $\sup_{t \in [0, s]} \|\widehat{U}^u_t\|$.

Now consider the weak solution associated with the initial value $\widehat U_s^u$ (resp. $U_s^u$) and the corresponding future free Brownian
motion $\{W_t\}_{t\ge0}$ (resp. $\{W_{t+s}-W_s\}_{t\ge0}$). The weak uniqueness of the equation again implies that
\begin{equation*}
\mu_{\widehat U_{t+s}^u}
=
\mu_{U_{t+s}^u}
\end{equation*}
for any $t \geq 0.$
Therefore,
\begin{equation*}
W_2(\mu_{U_t^u},\mu_{U_{t+s}^u})
=
W_2(\mu_{U_t^u},\mu_{\widehat U_{t+s}^u})
\le
\|U_t^u-\widehat U_{t+s}^u\|_2
\le
Me^{-ct},
\end{equation*}
which implies that $\{\mu_{U_t^u}\}_{t\ge0}$ is Cauchy in $W_2$. Since
$\mathcal P_2(\mathbb R)$ is complete under $W_2$, there exists
$\mu^u\in\mathcal P_2(\mathbb R)$ such that
\begin{equation*}
\mu_{U_t^u}\longrightarrow \mu^u
\quad\text{in }W_2,\quad \text{as } t\to\infty .
\end{equation*}

Moreover, Proposition~\ref{prop:Stability-prop-1} yields that, for any initial values
$u,v\in\mathscr{A}_{sa}$,
\begin{equation*}
W_2(\mu_{U_t^u},\mu_{U_t^v})
\leq \|U_t^u-U_t^v\|_2
\leq e^{-Ct}\|u-v\|_2 \to 0,
\quad \text{as } t\to\infty .
\end{equation*}
Using the triangle inequality, we obtain
\begin{equation*}
W_2(\mu^u,\mu^v)
\leq W_2(\mu^u,\mu_{U_t^u})
+ W_2(\mu_{U_t^u},\mu_{U_t^v})
+ W_2(\mu_{U_t^v},\mu^v).
\end{equation*}
Passing to the limit as $t\to\infty$, the right-hand side vanishes, and hence
$W_2(\mu^u,\mu^v)=0$. Since $W_2$ is a metric, we conclude that $\mu^u=\mu^v$.
Thus the limiting distribution does not depend on the initial value, and we denote
this unique limiting distribution by $\mu$.
\end{proof}

\begin{remark}
In free probability, a free copy of an operator $X \in \mathscr{A}$ is another operator $\widehat{X}$ (usually defined on an enlarged space, for instance, the reduced free product $(\mathscr{A}, \tau) \ast (\mathscr{A}, \tau)$) such that they are freely independent and have the same distribution. Note that in our proof we shift our framework to the enlarged noncommutative probability space after introducing the free copy.  
\end{remark}

\begin{proof}[Proof of Theorem~\ref{thm:stationary}]
We only need to prove that $\mu$ is stationary, where $\mu$ is obtained in Proposition \ref{prop:Cauchy-prop-2}. Let $Y \in \mathscr{A}_{sa}$ with distribution $\mu$ that is free from the driving free Brownian motion $\{W_t\}_{t\ge0}$, and let $\{U^Y_{t}\}_{t \geq 0}$ be the unique solution to the free SDE \eqref{eq:fSDE} with the initial value $Y$.
Since $\mu_{U_s^Y}\to\mu$ in $W_2$ as $s\to\infty$, 
 we can choose $Y_s\in \mathscr A_{sa}$ such that $\mu_{Y_s}=\mu_{U_s^Y}$ and
\begin{equation*}
\|Y_s-Y\|_2
=
W_2(\mu_{U_s^Y},\mu)
\longrightarrow 0\,~\text{as}\,~ s\to\infty.
\end{equation*}

Let $\{\widehat W_t\}_{t\geq0}$ be a free copy of $\{W_t\}_{t\geq 0}$ that is free from $W^*(Y_s, Y)$. Let $\{\widehat U_t^{Y_s}\}_{t\ge0}$ and $\{\widehat U_t^Y\}_{t\ge0}$ be the corresponding solutions to \eqref{eq:fSDE} driven by
$\{\widehat W_t\}_{t\ge0}$. Then Proposition~\ref{prop:Stability-prop-1} gives for any $t\ge 0$
\begin{equation*}
\|\widehat U_t^{Y_s}-\widehat U_t^Y\|_2
\le
e^{-Ct}\|Y_s-Y\|_2\,~\text{as}\,~ s\to\infty.
\end{equation*}
Hence
\begin{equation*}
\mu_{\widehat U_t^{Y_s}}\longrightarrow \mu_{\widehat U_t^Y}\, \text{in}\,~ W_2 \, ~\text{as}\,~ s\to\infty.
\end{equation*}

Now consider the weak solution associated with the initial value $Y_s$ (resp. $U_s^Y$) and the corresponding future free Brownian
motion $\{\widehat W_t\}_{t\ge0}$ (resp. $\{W_{t+s}-W_s\}_{t\ge0}$). Then, the weak uniqueness of the equation implies that 
\begin{equation*}
\mu_{\widehat U_t^{Y_s}}
=
\mu_{U_{t+s}^Y}.
\end{equation*}
Letting $s\to\infty$, we obtain
\begin{equation*}
\mu_{\widehat U_t^{Y_s}}
=
\mu_{U_{t+s}^Y}
\longrightarrow
\mu \,~ \text{in}\,~ W_2.
\end{equation*}
Finally, note that $\{U_t^Y, W_t\}_{t \geq 0}$ and $\{\widehat U_t^Y, \widehat W_t\}_{t \geq 0}$ are weak solutions to the equation. So we conclude that
\begin{equation*}
\mu_{U_t^Y}=\mu_{\widehat U_t^Y}=\mu .
\end{equation*}
Since $t\ge0$ is arbitrary, $\mu$ is a stationary distribution for the free SDE.

It remains to prove the uniqueness of the stationary distribution. 
Let $\nu$ be another stationary distribution, and let $Z$ be an initial value such that
$\mu_Z=\nu$. Since $\nu$ is stationary, we have
\begin{equation*}
\mu_{U_t^Z}=\nu, \qquad t\geq 0.
\end{equation*}
By Proposition~\ref{prop:Cauchy-prop-2}, however, $\mu_{U_t^Z}$ converges to $\mu$
in $W_2$ as $t\to\infty$. Therefore,
\begin{equation*}
W_2(\nu,\mu)
=
\lim_{t\to\infty} W_2(\mu_{U_t^Z},\mu)
=0.
\end{equation*}
Since $W_2$ is a metric, it follows that $\nu=\mu$. Thus the stationary distribution
is unique.
\end{proof}

 
 \subsection{Exponentially decay of the stationary distribution}
 
Let $f: \mathbb{R} \to \mathbb{C}$ be a $C^{k+2}$ function with compact support. Then its Fourier transform is well-defined. Let $\hat{f} (y) = \int_{\mathbb{R}} f(x) e^{-iyx} {\rm d}x.$ It was known \cite{korner1988fourier} that
there exists a constant $C>0$ such that 
\begin{equation*}
|y|^k |\hat{f}(y)|  \leq \frac{C}{1+ y^2}.
\end{equation*}
Therefore, we have 
\begin{equation*}
\int_{\mathbb{R}} |y|^k |\hat{f}(y)| {\rm d}y < \infty.  
\end{equation*}

On the other hand, by Duhamel's formula we have (see also \cite{Azamov2009,jekel2022tracial})
\begin{equation*}
f(U) - f(V)= \int_0^1 \int_{\mathbb{R}} y \,
e^{i s y U}(U - V)e^{i(1-s)y V} \,
\hat{f}(y) \, {\rm d} y\,{\rm d}s 
\end{equation*}
for any $U, V \in \mathscr{A}_{sa}.$

\begin{proof}[Proof of Corollary~\ref{cor:exp-conv}]
Let $u$ and $v$ be self-adjoint operators with the same distribution $\mu$, 
and let $\{U^u_t\}_{t \geq 0}$ and $\{U^v_t\}_{t \geq 0}$ be the unique solutions to \eqref{eq:fSDE} 
with initial values $u$ and $v$, driven by free Brownian motions that are free from $u$ and $v$, respectively.
Now let $\{\widehat{W}_t\}_{t \geq 0}$ be a free Brownian motion that is free from $W^*(u, v)$, and denote by $\{\widehat{U}^u_t\}_{t \geq 0}$ and $\{\widehat{U}^v_t\}_{t \geq 0}$ 
the corresponding solutions driven by this common $\{\widehat{W}_t\}_{t \geq 0}$.
Then Proposition~\ref{prop:Stability-prop-1} yields, for any $t \geq 0$,
\begin{equation*}
    \|\widehat{U}^u_t - \widehat{U}^v_t\|_2 
    \leq e^{-C_4 t} \|u - v\|_2 
    \leq 2 e^{-C_4 t} \|u\|_2.
\end{equation*}
Since $U^v_t$ has the stationary distribution $\mu$ and the free Brownian motions share the same distribution, 
it follows that for any $t \geq 0$ and any $f \in C_c^3(\mathbb{R})$,
\begin{equation*}
\begin{split}
\bigl|\tau\bigl(f(U^u_t)\bigr)& -\int_{\mathbb{R}}f(x)\, \mu( {\rm d} x)\bigr|\\
& = \bigl|\tau\bigl(f(U^u_t)\bigr)-\tau\bigl(f(U_t^v)\bigr)\bigr| = \bigl|\tau\bigl(f(\widehat U^u_t)\bigr)-\tau\bigl(f(\widehat U_t^v)\bigr)\bigr|\\
& =\bigl|\tau \int_0^1 \int_{\mathbb{R}} y \,
e^{i s y \widehat U^u_t}(\widehat U^u_t - \widehat U_t^v)e^{i(1-s)y \widehat U_t^v} \,
\hat{f}(y) \, {\rm d} y\, {\rm d} s   \bigr|\\
& \leq \int_0^1 \int_{\mathbb{R}} |y| \,
\bigl\| e^{is y \widehat U^u_t}(\widehat U^u_t - \widehat U_t^v)e^{i(1-s)y \widehat U_t^v}\bigr\|_2 \,
|\hat{f}(y)|\, {\rm d} y\, {\rm d}s \\
&= \int_{\mathbb{R}} |y\hat{f}(y)|\,{\rm d} y \cdot \| \widehat U^u_t - \widehat U_t^v \|_2.
\end{split}
\end{equation*}

It follows that
\begin{equation*}
\bigl|\tau\bigl(f(U^u_t)\bigr)-\int_{\mathbb{R}}f(x)\, \mu( {\rm d} x)\bigr|
\le C_f\, e^{-C_4 t}\|u\|_2,
\end{equation*}
with $C_f := 2 \int_{\mathbb{R}}|y\hat{f}(y)|\, {\rm d} y$. This completes the proof.
\end{proof}
 
\section{Examples}\label{sec:exa}

In this section, we provide some concrete examples to illustrate the applicability of our main results.

\subsubsection*{Free Diffusions}

Recall that Biane and Speicher \cite{bianeFreeDiffusionsFree2001} introduced free diffusions of the form
\begin{equation*}
d U_t = \alpha(U_t) {\rm d}t + {\rm d}W_t.
\end{equation*}
They established the existence and uniqueness of the solution under the following conditions \cite[Theorem 3.1]{bianeFreeDiffusionsFree2001}: the drift $\alpha$ is locally operator Lipschitz, and moreover, for any self-adjoint $U,$
\begin{equation}\label{eq:biane}
U\alpha(U)+\alpha(U)U+{\bf 1} \le a U^2 + b,
\end{equation}
for some constants $a\in\mathbb{R},\,b\ge0.$ Moreover, they also obtained the stationary case of the free diffusions in \cite[Section 4]{bianeFreeDiffusionsFree2001}.

In the present work, we replace the locally operator Lipschitz condition with the condition stated in Assumption~\ref{assumption:ass1}. Moreover, condition \eqref{eq:biane} is relaxed to the more general condition \eqref{eq:ohtj} in Assumption~\ref{assumption:ass2}. 
More precisely, consider drifts of the form
\begin{equation}
\alpha(U)=c\,U+f(U),
\end{equation}
where $f$ is defined via function calculus from a Lipschitz function $f:\mathbb{R}\to\mathbb{R}$ with constant $L_f$ and satisfying $f(0)=0.$ Note that every Lipschitz function is operator Lipschitz in the $L^2$-norm (see \cite[Theorem 2]{potapov_operator-lipschitz_2011}). So the drift $\alpha$ is operator Lipschitz on $L^2(\mathscr{A},\tau).$  
Furthermore,
\begin{equation*}
\begin{split}
2\tau\bigl(U\alpha(U)\bigr) + 1
&= 2c\|U\|_2^2 + 2\tau\bigl(Uf(U)\bigr) + 1 \\
&\le 2(c+L_f)\|U\|_2^2  + 1, 
\end{split}
\end{equation*}
so that condition \eqref{eq:ohtj} holds with $L_1=2c+2L_f$ and $L_2=1$. According to Theorem~\ref{thm:solution}, a unique global solution exists for every $c\in\mathbb{R}$. 

A crucial observation is that not every (scalar) Lipschitz function is operator Lipschitz; the canonical counter-example is $f(x)=|x|$ (Lipschitz constant $1$), which fails to be operator Lipschitz. Hence, the choice
\begin{equation}
\alpha(U)=c\,U+|U|
\end{equation}
provides an explicit drift that satisfies our Assumptions~\ref{assumption:ass1} and~\ref{assumption:ass2} (for suitable $c$) but does not satisfy the (locally) operator Lipschitz condition required by Biane and Speicher. This concrete example underscores the intrinsic difference between the two settings.

For the stationary distribution of the free diffusion, we consider 
\begin{equation*}
\alpha(U)=-\frac{1}{2} U-\frac{g}{2} U^3
\end{equation*}
where $g$ is a negative constant but sufficiently close to zero. It is clear that $\alpha$ is locally operator Lipschitz and satisfies the condition \eqref{eq:biane}. Hence, there exists a solution defined for all $t\ge 0$, which moreover remains uniformly bounded in operator norm. Weak uniqueness follows from Remark~\ref{rmk:fdiff-weak}. Using the non-commutative Cauchy-Schwarz inequality, we obtain
\begin{equation*}
\begin{split}
 -\frac{g}{2} \langle U-V,\;U^3-V^3\rangle 
&= -\frac{g}{2} \langle U-V,\;(U-V)U^2+VU(U-V)+V(U-V)V\rangle\\
&=  -\frac{g}{2} (\tau((U-V)^2U^2)+ \tau(VU(U-V)^2)+ \tau((V(U-V))^2))\\
&\le  -\frac{g}{2} (\|U-V\|_2^2\|U\|^2+ \|V\|\|U\|\|U-V\|_2^2+ \|V\|^2\|U-V\|_2^2)\\
&=  -\frac{g}{2}(\|U\|^2+\|U\|\|V\|+\|V\|^2)\|U-V\|_2^2.
\end{split}
\end{equation*}
Denote the uniformly boundedness of solution by $M.$ Then
\begin{equation*}
\begin{split}
\langle U-V,\;\alpha(U)-\alpha(V)\rangle
&= -\frac{1}{2}\|U-V\|_2^2 -\frac{g}{2} \langle U-V,\;U^3-V^3\rangle \\
&\le (-\frac{1}{2}-\frac{3g}{2}M^2)\|U-V\|_2^2,
\end{split}
\end{equation*}
which is precisely condition \eqref{eq:alpha-onesideLip-loc} with $L'_\alpha=-\frac{1}{2}-\frac{3g}{2}M^2$. Hence, for $0> g>-\frac{1}{3M^2},$ Theorem~\ref{thm:stationary}  guarantees the existence of a unique stationary distribution, which gives the affirmative answer to the conjecture in \cite{bianeFreeDiffusionsFree2001}.


\subsubsection*{Free geometric Brownian motion}

A symmetric version of the free geometric Brownian motion, studied in \cite{karginFreeStochasticDifferential2011}, is described by
\begin{equation}\label{eq:gbm2}
d U_t = a U_t {\rm d}t + U_t {\rm d}W_t + {\rm d}W_t U_t.
\end{equation}
It is clear that Assumption~\hyperlink{assumption:ass3}{(B-1)} holds trivially with $L'_\alpha=a,L_{\beta^1}=1,L_{\gamma^1}=0,L_{\beta^2}=0,L_{\gamma^2}=1.$ 
By Proposition~A.1 in \cite{capitaine_free_2005}, the free geometric Brownian motion has a unique solution in \((\mathscr A,\tau)\), and this solution is uniformly bounded in the operator norm.  Weak uniqueness is obtained by applying Remark~\ref{rmk:weak-unique}. Therefore, Theorem~\ref{thm:stationary} ensures that the equation admits a unique stationary distribution provided that \(a<-2\).

\subsubsection*{An example with a non-linear drift}
The previous examples have linear drift coefficient. Now consider 
\begin{equation}\label{eq:gbm2}
d U_t = (a U_t+b(\sin U_t)^3) {\rm d}t + c U_t {\rm d}W_t + c {\rm d}W_t U_t,
\end{equation}
for some constants $a, b, c \in \R.$ 
Note that $(\sin x)^3$ is operator Lipschitz. By Proposition~A.1 in \cite{capitaine_free_2005}, the equation~\eqref{eq:gbm2} admits a unique solution in \((\mathscr A,\tau)\), and this solution remains uniformly bounded in the operator norm. Remark~\ref{rmk:weak-unique} yields the desired weak uniqueness.

Then for any $U, V \in  L^2(\mathscr{A},\tau)_{sa},$
\begin{equation*}
\begin{split}
&\langle U-V,\alpha(U)-\alpha(V)\rangle= a\|U-V\|^2_2 + b\tau(((\sin U)^3-(\sin V)^3)(U-V))\\
&\leq a\|U-V\|^2_2 + b\tau\big((\sin U)^2(\sin U-\sin V)(U-V)\\
&\quad+\sin U(\sin U-\sin V)\sin V(U-V)+(\sin U-\sin V)(\sin V)^2(U-V)\big)\\
&\leq a\|U-V\|^2_2 + |b|\big(\|\sin U\|^2\|\sin U-\sin V\|_2\|U-V\|_2+\|\sin U\|\\
&\quad\cdot\|\sin U-\sin V\|_2\|\sin V\| \|U-V\|_2+\|\sin U-\sin V\|_2\|\sin V\|^2\|U-V\|_2\big)\\
&\leq (a+3|b|)\|U-V\|^2_2,
\end{split}
\end{equation*}
where we used $\|\sin U\|\leq1.$ So Assumption~\hyperlink{assumption:ass3}{(B-1)} holds with $L'_\alpha=a+ 3|b|,L_{\beta^1}=|c|,L_{\gamma^1}=0,L_{\beta^2}=0,L_{\gamma^2}=|c|.$ 
According to Theorem~\ref{thm:stationary}, the equation \eqref{eq:gbm2} admits a unique stationary distribution if $a <-3|b|-2c^2.$
\bigskip

{\bf Acknowledgment.} 
We thank Prof. Roland Speicher for valuable discussions. We are partially supported by NSFC No. 12571147 and Provincial Natural Science Foundation of Hunan (grand number 2024JJ1010). J. Wei further acknowledges the support of Saarland University, which provided a fruitful environment to work on this project.

\bibliographystyle{acm}
\bibliography{ref}

\end{document}